\documentclass[leqno,10.5pt]{article} 
\usepackage{amsmath, amssymb}  
\usepackage{amsthm} 
\usepackage{amscd}   
\usepackage{bm}
\usepackage{amsmath, amssymb}    
\usepackage{amsthm} 
\usepackage{ascmac}
\usepackage{amscd}     
\usepackage{color}
\usepackage{graphicx}

\theoremstyle{plain} 
\newtheorem{theorem}{\indent\sc Theorem}[section]
\newtheorem{lemma}[theorem]{\indent\sc Lemma}
\newtheorem{corollary}[theorem]{\indent\sc Corollary}
\newtheorem{proposition}[theorem]{\indent\sc Proposition}

\theoremstyle{definition} 
\newtheorem{definition}[theorem]{\indent\sc Definition}

\newtheorem{remark}[theorem]{\indent\sc Remark}
\newtheorem{example}[theorem]{\indent\sc Example}

\renewcommand{\L}{\mathrm{Li}}

\newcommand{\Q}{\mathbb{Q}}

\newcommand{\Z}{\mathbb{Z}}

\newcommand{\N}{\mathbb{N}}

\def\2{I\hspace{-.1em}I}

\def\deg{{\rm{deg}}}
\def\ord{{\rm{ord}}}
\def\dim{{\rm{dim}}}
\def\ker{{\rm{ker}}}

\def\det{{\rm{det}}}
\def\ord{{\rm{ord}}}
\def\Span{{\rm{Span}}}
\def\Mat{{\rm{Mat}}}

\title{Linear independence of periods related to polylogarithms}
\author{\textsc{Makoto Kawashima}}
\date{ }   
\begin{document}

\maketitle
 


\begin{abstract}
This paper provides the first criteria for the linear independence of multiple polylogarithm values over algebraic number fields. 
In particular, we derive novel results regarding the linear independence of products of polylogarithms at distinct points over an algebraic number field. 
Our approach is based on the explicit construction of Pad\'{e}-type approximants tailored for multiple polylogarithms.
\end{abstract}
\textit{Key words}:~Multiple polylogarithm, linear independence, Pad\'{e} approximation, Rodrigues formula.

\section{Introduction}
Multiple polylogarithms are fundamental functions that arise naturally as periods of families of mixed Hodge structures (cf. \cite{Zhao2, HS}). 
Their special values encompass multiple zeta values, whose arithmetic properties, such as irrationality and linear independence, have been extensively studied yet remain largely shrouded in mystery (see \cite{BF}). 
In this paper, we establish a new criterion for the linear independence of multiple polylogarithms evaluated at algebraic points over number fields in both complex and $p$-adic settings. 
As a corollary, we derive a novel linear independence criterion for products of polylogarithms at distinct algebraic points.

\medskip

A primary tool for investigating the arithmetic of these values is the theory of Pad\'{e} approximation (see \cite{Pade1, Pade2}). 
The classical origins of this approach date back to the work of C.~Hermite \cite{Hermite e} on the transcendence of $e$. 
In $1782$, Legendre introduced the orthogonal polynomials that now bear his name, and in $1816$, Rodrigues discovered a remarkably elegant expression for them, subsequently termed the \emph{Rodrigues formula} by Hermite (cf. \cite{A} for the history of the formula).
Legendre polynomials naturally yield Pad\'{e} approximants for the logarithmic function.

The application of Legendre-type polynomials has led to profound arithmetic consequences. Notably, this approach yielded the irrationality of logarithms (K.~Alladi and M.~L.~Robinson \cite{A-R}) and the irrationality of Riemann zeta values $\zeta(2)$ and $\zeta(3)$ (F.~Beukers \cite{B1, B2, B3}). 
These methods were further enhanced and refined by many authors, including G.~V.~Chudnovsky \cite{ch2}, D.~V.~Chudnovsky and G.~V.~Chudnovsky \cite{Chubrothers84, Chubrothers84 II}, G.~Rhin and P.~Toffin \cite{R-T}, M.~Hata \cite{H1, H2, H3, H4}, V.~N.~Sorokin \cite{S, S1, S2}, R.~Marcovecchio \cite{M}, and C.~Viola and W.~Zudilin \cite{V-Z}, primarily through the discovery of more sophisticated Legendre-type polynomials and their associated integral representations. 
In a related framework, S.~David, N.~Hirata-Kohno, and the author \cite{DHK2, DHK3, DHK4} established linear independence criteria for polylogarithms by deriving Rodrigues-type formulas for their Pad\'{e}-type approximants. 

\medskip

The objective of the present paper is to provide a unified framework for the construction of these Pad\'{e} approximants. 
To this end, we introduce the concept of the \emph{Rodrigues ideal}, which allows us to understand the existence of Rodrigues-type formulas from a purely algebraic perspective. 
By leveraging this algebraic structure, we clarify the essential link between the non-vanishing of certain determinants and the linear independence of the involved functions over the function field.

\medskip

The present paper is divided into two parts. In the first part, we establish a formal general theory of Pad\'{e}-type approximants for holonomic Laurent series. Following earlier ideas from \cite{KP2} (see also \cite{DHK2, KP, Kaw}), we associate with a given Laurent series $f$ a linear map called the \emph{formal $f$-integration transform}, denoted by $\varphi_f$ (see equation~\eqref{varphi f}). We show that the explicit construction of Pad\'{e}-type approximants for $f$ is governed by the structure of the kernel of $\varphi_f$, which can be analyzed through the polynomial coefficient differential operators annihilating $f$ (Corollary~\ref{corollary inc}).

A central pillar of this general theory is the introduction of the \emph{Rodrigues ideal} associated with a polynomial coefficient differential operator $L$. We prove that any nonzero element of this ideal yields Pad\'{e}-type approximants for holonomic Laurent series whose image under $L$ is a polynomial (Proposition~\ref{Pade}). This concept unifies several earlier constructions and provides a transparent algebraic machinery that bypasses the ad-hoc analytic or combinatorial identities often found in the classical literature. By utilizing the algebraic properties of the Rodrigues ideal, we provide a systematic guideline for the explicit construction of approximants, effectively handling the structural complexity of functions such as the multiple polylogarithms treated in Part II.

Furthermore, we address a question of fundamental importance in transcendental number theory: the criteria for the non-vanishing of the determinants of matrices formed by Pad\'{e}-type approximants. We show that, provided there exists a suitable nonzero element in the Rodrigues ideal, the non-vanishing of such a determinant is equivalent to the linear independence of the corresponding family of functions over the function field (Proposition~\ref{equivalence nonzero det}). This equivalence provides a structural explanation of the determinant method underlying many Pad\'{e}-type arguments, establishing a rigorous link between the existence of Rodrigues-type formulas and the functional independence of the series involved.

\medskip

In the second part, we apply the general theory to the case of \emph{multiple polylogarithms}. 
We explicitly construct Pad\'{e}-type approximants for these functions and derive, as an arithmetic application, a \emph{new linear independence criterion for values of multiple polylogarithms} over an algebraic number field (Theorem~\ref{main}). 
As a corollary, we establish the linear independence of products of polylogarithms at distinct points (Corollary~\ref{main cor}).

Furthermore, while Theorem~\ref{main} can be viewed as a generalization of the work of Sorokin \cite{S1, S2}, it should be noted that his papers, particularly \cite{S2}, did not provide explicit statements regarding the linear independence of multiple polylogarithms or products of polylogarithms at distinct points.  
Specifically, Sorokin's approach heavily relied on the analytic properties of the underlying functions, known as the Nikishin system (defined in \cite{Nikishin}), to establish the non-vanishing of certain crucial determinants. 
In contrast, one of the key contributions of the present paper is to eliminate these analytic constraints by introducing a purely formal treatment of Pad\'{e}-type approximations. 
By characterizing the approximation properties via intrinsic algebraic structures, we are able to extend the linear independence assertions unconditionally over arbitrary algebraic number fields.

\medskip

By applying the methodology of this paper to other Legendre-type polynomials, it is also possible to construct and analyze Pad\'{e}-type approximants for other functions related to multiple polylogarithms. 
As an example, in Section~\ref{section: Pade power of log}, we provide new Pad\'{e}-type approximants for products of logarithms. 
As observed by the Chudnovsky brothers (cf.~\cite[Sections 4, 5, and 6]{Chu bro 1999}), these Pad\'{e}-type approximants are expected to satisfy linear recurrence relations of Poincar\'{e} type.
If the asymptotic behavior of the magnitude of these linear recurrences can be determined using the Poincar\'{e}-Perron theorem, 
it will allow for a precise analysis of the linear independence measures of multiple polylogarithm values.

\section{Basic definitions and the statement of the main results}
We begin by establishing the notation and conventions used throughout this article. 
Let $K$ be an algebraic number field of degree $d = [K : \mathbb{Q}]$. 
We denote the set of all places of $K$ by $\mathfrak{M}_K$, which is partitioned into the set of finite places $\mathfrak{M}_K^f$ and infinite places $\mathfrak{M}_K^\infty$. 
For each place $v \in \mathfrak{M}_K$, let $K_v$ denote the completion of $K$ at $v$, and let $d_v = [K_v : \mathbb{Q}_p]$ be the local degree, where $p$ is the place of $\mathbb{Q}$ lying below $v$.
The normalized absolute value $|\cdot|_v$ on $K$ is defined as follows:
\begin{itemize}
    \item If $v \in \mathfrak{M}_K^f$ lies above $p$, then $|p|_v = p^{-\tfrac{d_v}{d}}$.
    \item If $v \in \mathfrak{M}_K^\infty$ corresponds to an embedding $\sigma_v : K \hookrightarrow \mathbb{C}$, then $|x|_v = |\sigma_v(x)|^{\tfrac{d_v}{d}}$.
\end{itemize}
Let $m$ be a nonnegative integer and $\boldsymbol{\beta} = (\beta_0, \dots, \beta_m) \in K^{m+1}$. 
The absolute affine height of $\boldsymbol{\beta}$ is defined by
\[
\mathrm{H}(\boldsymbol{\beta}) = \prod_{v \in \mathfrak{M}_K} \max\{1, |\beta_0|_v, \dots, |\beta_m|_v\},
\]
and the logarithmic absolute height is given by $h(\boldsymbol{\beta}) = \log \mathrm{H}(\boldsymbol{\beta})$. 
For each place $v \in \mathfrak{M}_K$, we define the local contribution to the height as
\[
h_v(\boldsymbol{\beta}) = \log \max\{1, |\beta_0|_v, \dots, |\beta_m|_v\},
\]
so that the global height admits the decomposition
\[
h(\boldsymbol{\beta}) = \sum_{v \in \mathfrak{M}_K} h_v(\boldsymbol{\beta}).
\]
For $v \in \mathfrak{M}_K$, let $\varepsilon_v = 1$ if $v \mid \infty$ and $\varepsilon_v = 0$ if $v \nmid \infty$. 

\medskip

We denote the set of positive integers by $\N$. 
Moreover, for $\boldsymbol{s}=(s_1,\ldots,s_d)\in \N^d$ we set $|\boldsymbol{s}|=\sum_{i=1}^ks_i$. 
Let us recall the definition of multiple polylogarithms introduced by A.~B.~Goncharov in \cite{Goncharov}.
\begin{definition}
Let $k$ be a positive integer, $\boldsymbol{s}=(s_1, \dots, s_k)\in\N^k$, and let $z_1, \dots, z_k$ be variables. 
For $\boldsymbol{s}$, the multiple polylogarithm is defined by
\[
\L_{\boldsymbol{s}}(z_1, \dots, z_k) = \sum_{0<n_1<n_2< \dots < n_k } \frac{z_1^{n_1} z_2^{n_2} \dots z_k^{n_k}}{n_1^{s_1} n_2^{s_2} \dots n_k^{s_k}}\in \Q[[z_1,\ldots,z_k]].
\]
Conventionally one refers $k$ as the {\it depth} and $|\boldsymbol{s}|$ as the {\it weight}. 
When the depth $k=1$, the function is nothing but the classical polylogarithms.
Note that for any place $v$ of $K$, the multiple polylogarithm $\L_{\boldsymbol{s}}(z_1, \dots, z_k)$ converges to an element of $K_{v}$ in the domain $|z_j|_v < 1$ for all $j = 1, \dots, k$.
\end{definition}

\medskip
 
We are now in a position to state our main results. 
Let $m,r$ be positive integers. 
Put $M=(m+1)^r-1$. 
We fix a vector $\boldsymbol{\alpha}=(\alpha_1,\ldots,\alpha_m)\in (K^{\times})^m$ whose coordinates are pairwise distinct.
For $\beta \in K$ such that $|\beta|_v > H_{v}(\boldsymbol{\alpha})$, we define the following real number:
\begin{align}
V_{v}(\boldsymbol{\alpha},\beta) &= (M+1)h_v(\beta) - h_v(\boldsymbol{\alpha})-M \left( h(\beta) + \frac{1}{m}\sum_{i=1}^m h(\alpha_i) + h(\boldsymbol{\alpha}) \right) \label{V} \\
&\quad - \left( M\log 2 + \frac{r(r+1)}{2}\log(m+1) + r + rM \right). \nonumber 
\end{align} 
The following theorem provides a novel linear independence criterion of the values of multiple polylogarithm over an algebraic number field. 
\begin{theorem} \label{main}
Let $v_0\in \mathfrak{M}_K$ and $\beta\in K$ with $|\beta|_{v_0}>H_{v_0}(\boldsymbol{\alpha})$.
Assume $V_{v_0}(\bm{\alpha},\beta)>0$. {\footnote{The positivity of $V_{v_0}(\boldsymbol{\alpha},\beta)$ signifies that $\beta$ possesses sufficient arithmetic magnitude at $v_0$ relative to the heights of the parameters. This condition is crucial for ensuring the convergence of the relevant Laurent series at $z=\beta$ and for maintaining the arithmetic stability necessary for Siegel's method.
}} 
Then, the following $M$ values of multiple polylogarithm in $K_{v_0}$$:$
\[
\L_{\boldsymbol{s}}(\alpha_{i_1}/\alpha_{i_2},...,\alpha_{i_{k-1}}/\alpha_{i_k},\alpha_{i_k}/\beta)\footnote{These values can also be expressed in terms of hyperlogarithms (see \cite{Zhao2}).} \ \ \text{for} \ \  \boldsymbol{s}=(s_1,\ldots,s_k)\in \cup_{k=1}^r\N^k \ \text{with} \ |\boldsymbol{s}|\le r  \ \text{and} \  1\le i_j \le m,
\]
together with $1$ are linearly independent over $K$.
\end{theorem}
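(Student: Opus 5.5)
The proof will follow the Padé–Siegel scheme, with the multiple polylogarithm values recast as values at $z=\beta$ of Laurent series in $1/z$. For each admissible pair $(\boldsymbol{s}, (i_1,\ldots,i_k))$, consider the series
\[
F_{\boldsymbol{s},\boldsymbol{i}}(z)=\L_{\boldsymbol{s}}(\alpha_{i_1}/\alpha_{i_2},\ldots,\alpha_{i_k}/z)\in \tfrac1z K[[1/z]].
\]
These satisfy iterated differential relations of the form
\[
\Big(z\tfrac{d}{dz}\Big)F_{\boldsymbol{s},\boldsymbol{i}}=F_{\boldsymbol{s}^-,\boldsymbol{i}},
\]
where $s_k$ is lowered by one, and when $s_k=1$ a factor of type $\alpha_{i_k}/(z-\alpha_{i_k})$ multiplies a depth-$(k-1)$ function. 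Consequently all $M$ functions, together with $1$, are annihilated by a product of the operators
\[
D_i=(z-\alpha_i)\tfrac{d}{dz}\ \ (\text{or } z\tfrac{d}{dz})
\]
of total order $r$ in a suitable sense. The count checks: the number of pairs is $\sum_{k\le r}\binom{r}{k}m^k=(m+1)^r-1=M$.

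First I apply the general theory of Part I. I would choose a polynomial coefficient operator $L$ sending each $F_{\boldsymbol{s},\boldsymbol{i}}$ to a polynomial. I would then exhibit an explicit nonzero element of its Rodrigues ideal, of Sorokin type: an iterated Rodrigues formula
\[
P_n(z)=\frac{1}{(n!)^{r}}\Big(\prod \partial^{n}\circ (z-\alpha_i)^{n}\Big)^{(r\text{ times})}z^{rn}\cdots.
\]
By Proposition~\ref{Pade}, this yields Padé-type approximants $(P_n,Q_{n,\boldsymbol{s},\boldsymbol{i}})$ with remainders
\[
R_{n,\boldsymbol{s},\boldsymbol{i}}=P_nF_{\boldsymbol{s},\boldsymbol{i}}-Q_{n,\boldsymbol{s},\boldsymbol{i}}=O(z^{-(Mn+1)})
\]
roughly, with $\deg P_n\approx Mn$. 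Applying the same construction to shifted data (varying an auxiliary parameter $0\le \ell\le M$) gives $M+1$ systems, i.e.\ an $(M+1)\times(M+1)$ matrix.

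Second, the non-vanishing of this determinant. By Proposition~\ref{equivalence nonzero det}, it suffices to prove that $1$ and the $F_{\boldsymbol{s},\boldsymbol{i}}$ are linearly independent over $K(z)$. I would prove this by induction on depth and weight via the differential relations. Apply $z\frac{d}{dz}$ (or $D_i$) to a putative relation with rational coefficients, and compare the singularities at $z=\alpha_i$ and at $0,\infty$: the functions with distinct $\alpha_{i_k}$ have logarithmic monodromy at distinct points, so distinctness of the $\alpha_i$ is used here. This reduces to a relation of lower weight with a nonzero coefficient, which is a contradiction. This is the point where the paper's formal approach replaces Sorokin's Nikishin-system analysis, and I expect it to be the main obstacle. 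The functional independence itself is routine. The difficulty is verifying that the Rodrigues element chosen lies in the ideal for the exact operator $L$ and is "suitable" in the sense required by Proposition~\ref{equivalence nonzero det}, so that the determinant is a nonzero constant times a power of $z$, hence nonzero at $z=\beta$.

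Finally, the arithmetic estimates. At $v_0$, I bound $|R_{n}(\beta)|_{v_0}$ from the integral/series representation, giving decay like
\[
|\beta|_{v_0}^{-(M+1)n}\,H_{v_0}(\boldsymbol{\alpha})^{n}\,e^{o(n)},
\]
which accounts for the term $(M+1)h_{v_0}(\beta)-h_{v_0}(\boldsymbol{\alpha})$. At all other places I bound the coefficients of $P_n$ and $Q_n$. These involve sums over $\binom{n}{j}$ (giving the term $M\log 2$) and the heights of the $\alpha_i$ and $\beta$. The denominators from the iterated integrations are of size $d_n^{\,r}$-type and $(m+1)$-multinomials, and these produce the terms $r+rM$ and $\frac{r(r+1)}2\log(m+1)$ via the prime number theorem. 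Combining these with the nonzero determinant in the standard Siegel-type linear independence criterion (product formula applied to a nonzero linear form) then gives the conclusion exactly when $V_{v_0}(\boldsymbol{\alpha},\beta)>0$.
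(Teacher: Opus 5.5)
Your overall architecture matches the paper's: approximants $P_{n,\ell}=\mathrm{Eval}_{t=z}(R_n^*\cdot t^\ell)$, $0\le\ell\le M$, from Proposition~\ref{Pade}; the determinant via Proposition~\ref{equivalence nonzero det}; coefficient and remainder estimates; a Siegel-type criterion. But the step you flag as the main obstacle is left open, and the formula you sketch for it would not work.

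\textbf{The missing construction.} The missing idea is the choice $L=L_{(m+1)^{r-1}}\cdots L_{m+1}L_1$ and $R_n=L_{(m+1)^{r-1}n}\cdots L_{(m+1)n}L_n$, with $L_N=\frac{1}{N!}z^N\prod_{i=1}^m(z-\alpha_i)^N\partial_z^N$. The exponents must grow by the factor $m+1$ from one layer to the next. Lemma~\ref{differential multiple polylogarithm} forces this: for $0\le k\le n-1$, $L_n\cdot z^kf_{\boldsymbol{s},\boldsymbol{a}}$ is a polynomial plus functions indexed by $\mathcal{S}_{r-1}$ whose polynomial coefficients have degree up to $(m+1)n-1$. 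A layer $L_N$ sends $z^{k'}\L_1(\alpha_i/z)$ into $K[z]$ only for $k'<N$; already $L_n\cdot z^n\L_1(\alpha_i/z)\notin K[z]$.

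Your formula (or $L_n^r$) uses the same exponent $n$ at all $r$ levels. It therefore gives polynomials of degree at most about $rmn$, which is $<Mn=((m+1)^r-1)n$ once $r\ge2$. The $Mn$ conditions defining weight-$n$ approximants of these $M$ functions have common solution space $R_n^*\cdot K[t]$, whose nonzero elements all have degree $\ge Mn$.

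For the same reason, your claim that the remainders are $O(z^{-(Mn+1)})$ is wrong. Each remainder is only guaranteed to be $O(z^{-(n+1)})$; asking for more would impose $M^2n$ conditions on a polynomial of degree about $Mn$. Products of the first-order $D_i$ cannot serve as $L$ either: they never remove the poles at $0,\alpha_i$ created by differentiating, so they do not even send $\L_1(\alpha_i/z)$ into $K[z]$.

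\textbf{Hypotheses and secondary points.} Even with the right $R_n$, your plan does not check the hypotheses of Proposition~\ref{equivalence nonzero det}. These are $\ord_{(1,-1)}(R_n)=Mn=n\,\ord_{(1,-1)}(L)$ (by \eqref{sum of ord}), property $(P)$ for $L$ and $R_n$ (equivalently $\deg R_n^*\cdot t^\ell=Mn+\ell$, Lemma~\ref{equiv of (P)}), and that your $M$ functions form a $K$-basis of $V_1(L)$. The last follows from $\dim_KV_1(L)=\ord_{(1,-1)}(L)=M$ (Lemma~\ref{property P implies}) once they lie in $V_1(L)$ and are $K$-independent; otherwise the determinant you control is not the one attached to your functions.

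Your differentiation/residue induction for independence over $K(z)$ is a legitimate alternative to the paper's dimension count. The paper uses surjectivity of $\pi\circ L_n:V_{n,r}\to V_{(m+1)n,r-1}$ (Lemma~\ref{surjective}) to get $\dim_KV_{n,r}=nM_r$, then Lemma~\ref{lem:linear_indep_equiv}. Its route reuses the same $L_n$ and identifies $V_n(L)$ at the same time.

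Your bookkeeping in the estimates is also off. At $v_0$ the remainder is bounded by $e^{o(n)}|\beta|_{v_0}^{-n}H_{v_0}(\boldsymbol{\alpha})^{(M+1)n}\|P_{n,\ell}\|_{v_0}$, not by something like $|\beta|_{v_0}^{-(M+1)n}$. The terms $(M+1)h_{v_0}(\beta)-h_{v_0}(\boldsymbol{\alpha})$ in $V_{v_0}$ appear only after subtracting $\frac1n\sum_{v\neq v_0}F_v(n)$, which contains $M(h(\beta)-h_{v_0}(\beta))+M(h(\boldsymbol{\alpha})-h_{v_0}(\boldsymbol{\alpha}))$. Finally, $\frac{r(r+1)}{2}\log(m+1)+r$ comes from Stirling bounds on $\binom{(m+1)^jn+M}{n}$; only $rM$ comes from $d_{Mn+M}^r$ via the prime number theorem.
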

The following corollary gives a new linear independence criterion of the values of product of polylogarithms at distinct points.
\begin{corollary} \label{main cor}
We keep the notation in Theorem~$\ref{main}$. 
Assume $V_{v_0}(\boldsymbol{\alpha},\beta)>0$.
Then the subset of $K_{v_0}$ of products of the polylogarithms$:$
\[
\{1\}\cup \{\L_{s_1}(\alpha_{i_1}/\beta)\ldots\L_{s_k}(\alpha_{i_k}/\beta) \mid   \boldsymbol{s}=(s_1,\ldots,s_k)\in \cup_{k=1}^r\N^k \ \text{with} \ |\boldsymbol{s}|\le r  \ \text{and} \  1\le i_j \le m\}\footnote{Note that although different sequences of indices may represent the same product due to commutativity, this is considered as a set of distinct values.},
\]
is linearly independent over $K$.
\end{corollary}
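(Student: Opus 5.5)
Corollary~\ref{main cor} should follow from Theorem~\ref{main} by a purely algebraic change of basis, with no new analytic input. The key tool is the shuffle (stuffle-free, integral-iterated) product for polylogarithms with a common last argument. Write $x_i=\alpha_i/\beta$. The product $\L_{s_1}(x_{i_1})\cdots\L_{s_k}(x_{i_k})$ is a product of iterated integrals on the path from $0$ to $1$, taken against the forms $dt/t$ and $dt/(t-1/x_{i})$ (equivalently, $\beta\,dt/(\beta t-\alpha_i)$). The shuffle product expresses it as a $\Z$-linear combination of iterated integrals of the same total weight $s_1+\cdots+s_k$. Each of these is, up to sign, a multiple polylogarithm $\L_{\boldsymbol{t}}(\alpha_{j_1}/\alpha_{j_2},\ldots,\alpha_{j_{l-1}}/\alpha_{j_l},\alpha_{j_l}/\beta)$ of depth $l=k$ with $|\boldsymbol{t}|=|\boldsymbol{s}|\le r$. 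This uses the standard dictionary between Goncharov's iterated integrals $I(0;a_1,0^{t_1-1},\ldots;1)$ and $\L_{\boldsymbol{t}}$ with ratio arguments. The identity holds formally, hence also $v_0$-adically in the convergence region guaranteed by $|\beta|_{v_0}>H_{v_0}(\boldsymbol{\alpha})$.

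So every element of the set in Corollary~\ref{main cor} lies in the $\Q$-span of the $M$ values of Theorem~\ref{main} (plus $1$). The weight is preserved, and the depth is preserved as well: shuffling $k$ words, each containing exactly one non-zero letter, gives words with exactly $k$ non-zero letters.

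The remaining point is that the set of distinct products spans a space of the same dimension as the span of the values in Theorem~\ref{main}. Then the transition matrix is square and invertible, and linear independence transfers. I would show this by a triangularity argument. Order words by a suitable order, for instance lexicographic in the sequence (position of the nonzero letters, their labels). The product $\prod_j \L_{s_j}(x_{i_j})$, sorted appropriately, has a distinguished leading word (the concatenation) with coefficient $\pm1$ times a positive multiplicity. Distinct unordered multisets of pairs $(s_j,i_j)$ give distinct leading words. This exhibits the products as linearly independent combinations of the basis values.

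The counting must also match. Let $N$ be the number of monomials of weight $\le r$ in the variables $\{(s,i)\}$; I need $N=M=(m+1)^r-1$ exactly, which is the shuffle-algebra fact that Lyndon words generate freely. If instead the count is smaller, triangularity alone still suffices: a $K$-linear relation among the products becomes a $K$-linear relation among $1$ and the values of Theorem~\ref{main}. By that theorem every coefficient in it must vanish. The triangular (full row rank) transition then forces the original coefficients to vanish. This second route avoids exact counting and is what I would actually write.

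The main obstacle is making the shuffle-to-$\L_{\boldsymbol{s}}$ dictionary precise, with the correct argument convention $\alpha_{i_1}/\alpha_{i_2},\ldots,\alpha_{i_k}/\beta$. The leading-term triangularity also has to be checked carefully when repeated pairs $(s,i)$ occur, since then the leading coefficient is a multinomial count rather than $1$. It is still a nonzero integer, so the argument goes through.
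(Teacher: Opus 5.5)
Your proof is correct, but it reaches the corollary by a different route from the paper.

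\textbf{The paper's route.} The paper writes down no explicit expansion. From the derivative formula for $\mathrm{Li}_s(\alpha_i/z)$ and the Leibniz rule, it gets that every product in $\mathcal{T}$ lies in $V_1(L)$. By Proposition~\ref{prop:linear_independence_Kz}\,(i) with $n=1$, this is the $M$-dimensional space with basis $\{f_{\boldsymbol{s},\boldsymbol{a}}\}$. It then invokes the algebraic independence of $\{\mathrm{Li}_s(\alpha_i/z)\}$ over $K(z)$ to see that $\mathcal{T}$ is $K$-linearly independent as a set of functions. Finally it extends $\mathcal{T}$ to a basis $\mathcal{B}$ of $V_1(L)$ and applies Theorem~\ref{main} to $\mathcal{B}$. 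This is legitimate because passing from $\{f_{\boldsymbol{s},\boldsymbol{a}}\}$ to $\mathcal{B}$ is an invertible $K$-linear change of basis, and evaluation at $\beta$ is $K$-linear.

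\textbf{Your route.} The shuffle expansion plays the role of the inclusion $\mathcal{T}\subset V_1(L)$, but with explicit integer coefficients in the word basis. The leading-word triangularity replaces the cited algebraic-independence theorem by a self-contained combinatorial fact. In effect you reprove the $K$-linear independence of $\mathcal{T}$ inside $V_1(L)$, and then use Theorem~\ref{main} on its own basis with no change of basis.

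\textbf{Trade-offs.} The paper's route is shorter inside its framework. It also yields the general principle that any $K$-independent family in $V_1(L)$ has values at $\beta$ that are $K$-independent together with $1$. Yours is explicit and avoids the external input, at two costs. First, you must prove the shuffle relation as an identity in $K[[1/z]]$, since there are no path integrals at a finite place $v_0$; an induction on length using the differentiation rule of Lemma~\ref{differential multiple polylogarithm} does it. Second, you must fix an order in which the concatenation really is the top word. One option is to put the nonzero letters $b_i=\beta/\alpha_i$ below $0$, so that each $b_i\,0^{s-1}$ is a Lyndon word. Another is your position-vector order, whose maximum forces the factors to be concatenated with $s_j$ nonincreasing.

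\textbf{Two small corrections.}
\begin{itemize}
\item The form $dt/(t-1/x_i)$ equals $\alpha_i\,dt/(\alpha_i t-\beta)$, not $\beta\,dt/(\beta t-\alpha_i)$.
\item The count $N=M$ is false in general. For $m=1$, $r=3$ there are only $6$ distinct products against $M=7$: in weight $3$ and depth $2$ the products reach only the combination $\mathrm{Li}_{2,1}(1,\alpha_1/\beta)+2\,\mathrm{Li}_{1,2}(1,\alpha_1/\beta)$. Free generation of the shuffle algebra involves all Lyndon words, not only the $b_i\,0^{s-1}$.
\end{itemize}
So the second route you say you would write, which avoids the count, is the one that works, and it suffices.
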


\medskip

\noindent\textbf{Outline of the article.} 
In Section~\ref{general Pade}, we begin by introducing the notion of Pad\'{e}-type approximants for Laurent series.  
In Section~\ref{formal f-integration}, we introduce the \textit{formal $f$-integration transform} associated with a Laurent series $f$.
 This transform plays a central role throughout the paper; we describe its fundamental properties, particularly in the case where $f$ is holonomic.  
In Section~\ref{rodrigues formula}, we define the Rodrigues ideal associated with a differential operator $L$ with polynomial coefficients. 
We demonstrate that any non-zero element of this Rodrigues ideal provides Pad\'{e}-type approximants for Laurent series that are mapped to polynomials under the action of $L$.
In Section~\ref{section: linear independence pade}, we prove that, given a suitable non-zero element in the Rodrigues ideal, 
the non-vanishing of the associated determinant is equivalent to the linear independence of the corresponding family of functions together with $1$ over the function field.   

Beginning in Section~\ref{section: Pade multiple polylog}, we apply the results from Sections~\ref{rodrigues formula} and~\ref{section: linear independence pade} to a specific class of multiple polylogarithms.
Section~\ref{section: Pade multiple polylog} is devoted to the explicit construction of Pad\'{e}-type approximants for these functions.
In Section~\ref{section: estimates}, we establish several key estimates, including the growth of the Pad\'{e}-type approximants and their corresponding remainder terms for both Archimedean and non-Archimedean valuations.
Section~\ref{section: proof of main theorem} contains the proof of Theorem~\ref{main} and Corollary~\ref{main cor}.  
Finally, in Section~\ref{section: Pade power of log}, we provide a novel example of Pad\'{e} approximants for powers of logarithms. 

\section{Pad\'{e}-type approximants of Laurent series} \label{general Pade}
Throughout this section, we fix a field $K$ of characteristic $0$.
We denote the formal power series ring of variable $1/z$ with coefficients $K$ by $K[[1/z]]$ and the field of fractions by $K((1/z))$. We say an element of $K((1/z))$ is a formal Laurent series.
We define the order function at $z=\infty$ by
$${\rm{ord}}_{\infty}:K((1/z)) \longrightarrow \Z\cup \{\infty\}; \ \sum_{k} \dfrac{a_k}{z^k} \mapsto \min\{k\in\Z\cup \{\infty\} \mid a_k\neq 0\}.$$
Remark that, for $f\in K((1/z))$, ${\rm{ord}}_{\infty} \, f=\infty$ if and only if $f=0$.
\begin{lemma} \label{pade}
Let $m$ be a nonnegative integer, $f_1(z),\ldots,f_m(z)\in 1/z\cdot K[[1/z]]$ and $\boldsymbol{n}=(n_1,\ldots,n_m)\in \N^{m}$.
Put $N=\sum_{j=1}^mn_j$.
For a nonnegative integer $M$ with $M\ge N$, there exist polynomials $(P,Q_{1},\ldots,Q_m)\in K[z]^{m+1}\setminus\{\boldsymbol{0}\}$ satisfying the following conditions$:$
    \begin{enumerate}
      \item[{\rm{\rm(i)}}]  \label{item: lem pade: item 1} $\deg \, P\le M$,
      \item[{\rm(ii)}]  \label{item: lem pade: item 2} ${\rm{ord}}_{\infty} \left(P(z)f_j(z)-Q_j(z)\right)\ge n_j+1$ for $j=1,\dots,m$.
    \end{enumerate}
\end{lemma}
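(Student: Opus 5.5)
This is a linear-algebra (Siegel-type) count: we show the conditions on $P$ form a homogeneous linear system with more unknowns than equations. Write $P(z)=\sum_{k=0}^{M}p_kz^k$ with unknown coefficients $p_0,\dots,p_M\in K$, which gives $M+1$ unknowns. For each $j$ write $f_j(z)=\sum_{l\ge1}f_{j,l}z^{-l}$. The product $P(z)f_j(z)$ is then a Laurent series in $K((1/z))$. We split it as
\[
P(z)f_j(z)=[P f_j]_{+}+[P f_j]_{-},
\]
where $[Pf_j]_+\in K[z]$ is its polynomial part (nonnegative powers of $z$) and $[Pf_j]_-\in 1/z\cdot K[[1/z]]$. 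We then set $Q_j:=[Pf_j]_+$. With this choice $P f_j-Q_j=[Pf_j]_-$ automatically has ${\rm ord}_\infty\ge1$.

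Condition (ii) now only requires that the coefficients of $z^{-1},\dots,z^{-n_j}$ in $P(z)f_j(z)$ vanish. The coefficient of $z^{-i}$ is
\[
c_{j,i}=\sum_{k=0}^{M}p_k f_{j,k+i},
\]
which is a $K$-linear form in $(p_0,\dots,p_M)$. Imposing $c_{j,i}=0$ for $1\le i\le n_j$ and $1\le j\le m$ gives a homogeneous system of $N=\sum_j n_j$ linear equations in $M+1$ unknowns. Since $M+1>N$, the system has a nonzero solution $(p_0,\dots,p_M)\in K^{M+1}$.

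For such a solution $P\neq0$ and $\deg P\le M$, so (i) holds. The tuple $(P,Q_1,\dots,Q_m)$ is nonzero because $P\neq0$, and (ii) holds by construction. The case $m=0$ is trivial: take $P=1$. There is no real obstacle here; the only care needed is the bookkeeping identifying the coefficients of $z^{-i}$ in $Pf_j$ as linear forms in the $p_k$, together with the observation that taking the polynomial part for $Q_j$ removes all conditions on nonnegative powers.
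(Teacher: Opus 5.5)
Your proof is correct: the paper states this lemma without proof, treating it as the standard Siegel-type count ($N$ homogeneous linear conditions on $M+1>N$ unknown coefficients of $P$), and that is exactly the argument you give. Your choice $Q_j=[Pf_j]_+$ agrees with the paper's $\varphi_{f_j}\bigl(\tfrac{P(z)-P(t)}{z-t}\bigr)$, and your equations $c_{j,i}=0$ are precisely the conditions $\varphi_{f_j}(t^{i-1}P(t))=0$ for $1\le i\le n_j$ in Lemma~\ref{equivalence Pade}, so the paper's later reformulation is the same linear system written in different notation.
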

\begin{definition}
We say that a vector of polynomials $(P,Q_{1},\ldots,Q_m) \in K[z]^{m+1}$ satisfying the properties ${\rm(i)}$ and ${\rm(ii)}$ is weight $\boldsymbol{n}$ and degree $M$ Pad\'{e}-type approximants\footnote{In case of $\boldsymbol{n}=(n,\ldots,n)$, we say weight $n$ and degree $M$ Pad\'{e}-type approximant for short.} of 
$(f_1,\ldots,f_m)$.
For such approximants $(P,Q_{1},\ldots,Q_m)$ of $(f_1,\ldots,f_m)$, we call the formal Laurent series $(P(z)f_j(z)-Q_{j}(z))_{1\le j \le m}$, \textit{id est} remainders, as weight $\boldsymbol{n}$ degree $M$ Pad\'{e}-type approximations of $(f_1,\ldots,f_m)$.
\end{definition}

\section{Formal $f$-integration transform} \label{formal f-integration}
Let $f(z)=\sum_{k=0}^{\infty} f_k/z^{k+1}\in 1/z\cdot K[[1/z]]$.
We define a {{$K$-linear map}} $\varphi_f\in {\rm{Hom}}_K(K[t],K)$ by
\begin{align} \label{varphi f}
\varphi_f:K[t]\longrightarrow K; \ \ \ t^k\mapsto f_k \ \ \ (k\ge0).
\end{align}
Note that the $K$-linear map
\begin{align} \label{Phi}
    \Phi: 1/z\cdot K[[1/z]]\longrightarrow {\rm{Hom}}_K(K[t],K)
\end{align}
defined by $f\mapsto \varphi_f$ is an isomorphism.

\medskip

The above linear map extends naturally 
in a $K((1/z))$-{{linear map}} $\varphi_f: K((1/z))[t]\rightarrow K((1/z))$. With this notation, the formal Laurent series $f(z)$ satisfies the following crucial identities {{(cf. \cite[$(6.2)$ p. 60 and $(5.7)$ p.52]{N-S})}}:
\begin{align*}
&f(z)=\varphi_f \left(\dfrac{1}{z-t}\right)\enspace,\ \ \ P(z)f(z)-\varphi_f\left(\dfrac{P(z)-P(t)}{z-t}\right)\in 1/z\cdot K[[1/z]] \ \ \text{for any} \ \ P(z)\in K[z]\enspace.
\end{align*}

The following lemma provides an equivalent condition for a polynomial to be a Pad\'{e}-type approximant for a family of Laurent series, utilizing the formal integration transform.
\begin{lemma} {\rm{\cite[Lemma~2.1]{Kaw}}}\label{equivalence Pade}
Let $m$ be a nonnegative integer, $f_1(z),\ldots,f_m(z)\in 1/z\cdot K[[1/z]]$ and $\boldsymbol{n}=(n_1,\ldots,n_m)\in \N^m$. Let $M$ be a positive integer and $P(z)\in K[z]$ a nonzero polynomial with 
$M\ge \sum_{j=1}^mn_j$ and ${\rm{deg}}\,P\le M$. Put $Q_j(z)=\varphi_{f_j}\left(\tfrac{P(z)-P(t)}{z-t}\right)\in K[z]$ for $1\le j \le m$.
Then the following statements are equivalent.
\begin{enumerate}
      \item[{\rm{\rm(i)}}] The vector of polynomials $(P,Q_1,\ldots,Q_m)$ is a weight $\boldsymbol{n}$ Pad\'{e}-type approximants of $(f_1,\ldots,f_m)$.
      \item[{\rm{\rm(i)}}] We have $t^kP(t)\in {\rm{ker}}\,\varphi_{f_j}$ for $1\le j \le m$, $0\le k \le n_j-1$.
\end{enumerate}
\end{lemma}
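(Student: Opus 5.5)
The plan is to compute the remainder $P(z)f_j(z)-Q_j(z)$ explicitly as a Laurent series in $1/z$ and read off its coefficients. Condition (i) of the definition, $\deg P\le M$, holds by hypothesis. So the content of the lemma is that the order condition (ii) holds exactly when the stated kernel conditions hold. We first note that $Q_j$ is a polynomial: $\frac{P(z)-P(t)}{z-t}\in K[z,t]$, and $\varphi_{f_j}$ acts only on the $t$-variable, so $Q_j\in K[z]$.

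The key step is the identity
\[
P(z)f_j(z)-Q_j(z)=\varphi_{f_j}\!\left(\frac{P(z)}{z-t}\right)-\varphi_{f_j}\!\left(\frac{P(z)-P(t)}{z-t}\right)=\varphi_{f_j}\!\left(\frac{P(t)}{z-t}\right),
\]
which uses $f(z)=\varphi_f\bigl(\tfrac{1}{z-t}\bigr)$ and the $K((1/z))$-linearity of the extended $\varphi_{f_j}$. Expanding $\frac{1}{z-t}=\sum_{k\ge0}t^k/z^{k+1}$ in $K((1/z))[[t]]$ gives
\[
P(z)f_j(z)-Q_j(z)=\sum_{k\ge0}\frac{\varphi_{f_j}\bigl(t^kP(t)\bigr)}{z^{k+1}}.
\]
To justify the interchange of $\varphi_{f_j}$ with this infinite sum, apply the map coefficientwise: since $\varphi_{f_j}(t^\ell)=f_{j,\ell}$, the coefficient of $z^{-(k+1)}$ is a finite sum $\sum_i p_i f_{j,k+i}$, where $P(t)=\sum_i p_it^i$. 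This is the only mildly delicate point, and it is resolved by comparing coefficients directly with the product $P(z)f_j(z)$.

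From this expansion, $\mathrm{ord}_\infty\bigl(Pf_j-Q_j\bigr)\ge n_j+1$ holds if and only if the coefficients of $z^{-1},\dots,z^{-n_j}$ vanish. That is exactly the condition $\varphi_{f_j}(t^kP(t))=0$ for $0\le k\le n_j-1$, which gives both directions of the equivalence at once.

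For the direction (i)$\Rightarrow$(ii), the lemma takes $Q_j$ to be the specific polynomial $\varphi_{f_j}\bigl(\frac{P(z)-P(t)}{z-t}\bigr)$, so the argument above applies as written. If one wanted the same conclusion for an arbitrary approximant $(P,\tilde Q_j)$, one would additionally note that $\tilde Q_j-Q_j$ is a polynomial of order at least $1$ at infinity, hence zero, so $\tilde Q_j=Q_j$.
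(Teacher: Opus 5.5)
Your proof is correct and follows essentially the same route as the paper. The paper does not reprove this lemma; it quotes it from \cite{Kaw}. However, in the proof of Proposition~\ref{Pade}~(iii) it uses exactly your identity $P(z)f_j(z)-Q_j(z)=\varphi_{f_j}\bigl(P(t)/(z-t)\bigr)=\sum_{k\ge 0}\varphi_{f_j}(t^kP(t))/z^{k+1}$, reading the order at infinity off the vanishing of the first $n_j$ coefficients, and your coefficientwise check that the $z^{-(k+1)}$-coefficient of $P(z)f_j(z)$ equals $\sum_i p_i f_{j,k+i}=\varphi_{f_j}(t^kP(t))$ is a clean way to justify applying $\varphi_{f_j}$ termwise.
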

Lemma~\ref{equivalence Pade} suggests that the study of $\ker\, \varphi_f$ is essential for the explicit construction of Pad\'{e}-type approximants to Laurent series. We now investigate $\ker \,\varphi_f$ for a holonomic Laurent series $f \in 1/z\cdot K[[1/z]]$. Throughout this section, we denote the differential operators $d/dz$ and $d/dt$ by $\partial_z$ and $\partial_t$, respectively. The action of a differential operator $L$ on a function $f$ is denoted by $L \cdot f$, with $\partial_z \cdot f$ abbreviated as $f'$.
We view the elements of the Weyl algebra $K[t,\partial_t]$ as $K$-endomorphisms of the polynomial ring $K[t]$ via the natural embedding $K[t,\partial_t]\hookrightarrow {\rm{End}}_K(K[t])$. 

\medskip

We begin by introducing the \textit{formal adjoint map}
\begin{equation} \label{iota diff}
\iota : K(z)[\partial_z] \longrightarrow K(t)[\partial_t], \quad \sum_j P_j(z)\partial_z^j \longmapsto \sum_j (-1)^j \partial_t^j P_j(t).
\end{equation}
Remark, for $L\in K(z)[\partial_z]$, $\iota(L)$ is called the {\it formal adjoint} of $L$ and related to the dual of differential module $K(z)[\partial_z] / K(z)[\partial_z]L$ (cf.~\cite[III, Exercise 3]{An1}).
For $L\in K(z)[\partial_z]$, we denote $\iota(L)$ by $L^{*}$. 
Note that the map $\iota$ is an anti-isomorphism of Weyl algebras, satisfying $(L_1 L_2)^* = L_2^* L_1^*$ for any $L_1, L_2 \in K(z)[\partial_z]$.
We also introduce the projection morphism $\pi$ by
\begin{align*} 
\pi :K((1/z))\longrightarrow K((1/z))/K[z]\cong 1/z\cdot K[[1/z]]; \ \ \ f(z)=P(z)+\tilde{f}(z)\mapsto \tilde{f}(z)\enspace, 
\end{align*}
where $P(z)\in K[z]$ and $\tilde{f}(z)\in 1/z\cdot K[[1/z]]$.

We remark that for any differential operator $L\in K[z,\partial_z]$ and Laurent series $f\in K((1/z))$, 
\begin{align} \label{pi circ L}
\pi(L\cdot f)=\pi(L\cdot \pi(f)).
\end{align}

We recall a fundamental result that is central to our investigation of Pad\'{e}-type approximants.
\begin{proposition} {\rm{\cite[Proposition~2.5]{Kaw}}} \label{key prop}
Let $L\in K[z,\partial_z]$ and $f(z)\in 1/z\cdot K[[1/z]]$. Then we have 
\[
\varphi_{\pi(L\cdot f)}=\varphi_f\circ L^{*}.
\]
\end{proposition}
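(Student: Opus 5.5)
Both sides of the identity are $K$-linear maps $K[t]\to K$. Since $\varphi_f$ is linear and $L\mapsto L^{*}$ is $K$-linear, and since $\pi(L\cdot f)$ depends linearly on $L$ (so that $\varphi_{\pi(L\cdot f)}$ does too, because $\Phi$ is linear), we may check the identity on additive generators. As $L\in K[z,\partial_z]$, it suffices to treat monomials $L=z^a\partial_z^b$. Moreover $(z^a\partial_z^b)^*=(\partial_z^b)^*(z^a)^*$ and $\pi(z^a\partial_z^b\cdot f)=\pi(z^a\cdot\pi(\partial_z^b f))$ by \eqref{pi circ L}. Thus the statement for a product $L_1L_2$ follows from the statements for $L_1$ and $L_2$:
\[
\varphi_{\pi(L_1L_2 f)}=\varphi_{\pi(L_1\cdot \pi(L_2 f))}=\varphi_{\pi(L_2 f)}\circ L_1^*=\varphi_f\circ L_2^*L_1^*=\varphi_f\circ (L_1L_2)^* .
\]
Here the first equality uses \eqref{pi circ L}, and $\pi(L_2f)\in 1/z\cdot K[[1/z]]$, so the hypothesis of the statement applies to it. It therefore suffices to prove the statement for the two generators $L=z$ and $L=\partial_z$, for arbitrary $f\in 1/z\cdot K[[1/z]]$.

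\emph{Case $L=z$.} Here $L^*=t$, multiplication by $t$. Write $f=\sum_{k\ge0}f_k z^{-k-1}$. Then $zf=f_0+\sum_{k\ge0}f_{k+1}z^{-k-1}$, so $\pi(zf)$ has coefficient sequence $(f_{k+1})_k$. Hence $\varphi_{\pi(zf)}(t^k)=f_{k+1}=\varphi_f(t\cdot t^k)$, as required.

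\emph{Case $L=\partial_z$.} Here $L^*=-\partial_t$. We have $f'=\sum_k -(k+1)f_k z^{-k-2}$, which already lies in $1/z\cdot K[[1/z]]$, so $\pi(f')=f'$. Its coefficient of $z^{-(k+1)}$ is $-kf_{k-1}$ for $k\ge1$ and $0$ for $k=0$. On the other side, $\varphi_f(-\partial_t t^k)=-k\varphi_f(t^{k-1})=-kf_{k-1}$, which is also $0$ when $k=0$. The two sides agree.

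The only delicate point is the reduction to generators: one needs the compatibility \eqref{pi circ L} so that intermediate polynomial parts can be discarded, and the anti-multiplicativity of $\iota$ so that the order of composition comes out right. Both are already recorded in the text, so no real obstacle remains. Alternatively, one could verify directly via the identity $f(z)=\varphi_f\bigl(1/(z-t)\bigr)$, using $\partial_z\frac{1}{z-t}=-\partial_t\frac{1}{z-t}$ and $z\cdot\frac{1}{z-t}=t\cdot\frac{1}{z-t}+1$, where the constant $1$ is killed by $\pi$.
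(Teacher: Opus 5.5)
Your proof is correct and complete. The base cases $L=z$ (where $L^*=t$) and $L=\partial_z$ (where $L^*=-\partial_t$) are checked correctly by comparing coefficients. The extension to all of $K[z,\partial_z]$ also holds up: by linearity it reduces to monomials $z^a\partial_z^b$, and these are handled by induction on the number of factors. That induction uses \eqref{pi circ L} applied to $\pi(L_2\cdot f)$, which is legitimate because the base cases hold for arbitrary $f\in 1/z\cdot K[[1/z]]$, together with the anti-multiplicativity $(L_1L_2)^*=L_2^*L_1^*$.

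The paper gives no proof of its own here; it imports the statement from \cite[Proposition~2.5]{Kaw}. Your generator-by-generator argument is the natural route to it.
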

Proposition \ref{key prop} immediately yields the following characterization, which establishes a crucial equivalence relation.
\begin{corollary} {\rm{\cite[Corollary~$2.6$]{Kaw}}} \label{corollary inc} 
Let $f(z)\in 1/z\cdot K[[1/z]]$ and $L\in K[z,\partial_z]$.
The following are equivalent.
\begin{enumerate}
      \item[{\rm{\rm(i)}}] $L\cdot f\in K[z]$. 
      \item[{\rm(ii)}] $L^{*}\cdot K[t]\subseteq {\rm{ker}}\,\varphi_f$.
\end{enumerate}
\end{corollary}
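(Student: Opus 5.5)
The plan is to deduce the equivalence directly from Proposition~\ref{key prop}, applied to $L\in K[z,\partial_z]$ and $f\in 1/z\cdot K[[1/z]]$, which gives
\[
\varphi_{\pi(L\cdot f)}=\varphi_f\circ L^{*}
\]
as maps $K[t]\to K$. Since $L$ has polynomial coefficients, $L^{*}$ lies in the Weyl algebra $K[t,\partial_t]$ and therefore maps $K[t]$ into itself. Hence the right-hand side is a well-defined element of ${\rm Hom}_K(K[t],K)$.

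The key steps are these.

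First, condition (i), $L\cdot f\in K[z]$, is equivalent to $\pi(L\cdot f)=0$. This holds because $\pi$ is exactly the projection $K((1/z))\to K((1/z))/K[z]$, whose kernel is $K[z]$.

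Second, we translate this vanishing to the transform side. The map $\Phi:1/z\cdot K[[1/z]]\to {\rm Hom}_K(K[t],K)$ is an isomorphism, so $\pi(L\cdot f)=0$ holds if and only if $\varphi_{\pi(L\cdot f)}=0$. Indeed, $\varphi_g=0$ forces every coefficient $g_k$ of $g$ to vanish.

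Third, by Proposition~\ref{key prop}, $\varphi_{\pi(L\cdot f)}=0$ holds if and only if $\varphi_f(L^{*}\cdot P(t))=0$ for every $P\in K[t]$. This is precisely the statement $L^{*}\cdot K[t]\subseteq \ker\varphi_f$, which is condition (ii).

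Chaining these equivalences proves the corollary. The only point needing any care is the injectivity of $\Phi$, since it is what converts the vanishing of the linear functional back into the vanishing of the Laurent series. Injectivity is immediate from the definition $\varphi_g(t^k)=g_k$, so I expect no real obstacle: the corollary is a direct reformulation of Proposition~\ref{key prop}.
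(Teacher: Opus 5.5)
Your proposal is correct and follows essentially the same route as the paper: both reduce (i) to $\pi(L\cdot f)=0$ and (ii) to $\varphi_f\circ L^{*}=0$, then conclude by Proposition~\ref{key prop}. You make explicit the injectivity of $\Phi$ that links the two vanishing conditions, which the paper's proof uses without stating.
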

\begin{proof} 
For the sake of completeness, we recall the proof of the statement. 
The conditions ${\rm(i)}$, ${\rm(ii)}$ are equivalent to $\pi(L\cdot f)=0$ and $\varphi_f \circ L^{*}=0$ respectively. Therefore by Proposition \ref{key prop}, we obtain the assertion. 
\end{proof}
\section{Rodrigues formula} \label{rodrigues formula}
We keep the notation in Section~\ref{formal f-integration}. 
We prepare further notation.
Let $K$ be a field of characteristic $0$.
For a $K$-vector space $V$ and the subset $S\subset W$, we denote the $K$-vector subspace of $V$ generated by $S$ by $\Span_K\, S$.
Given an integer $n$, we denote by $K[z]$ the ring of polynomials in $z$ with coefficients in $K$, and by $K[z]_{\leq n} \subset K[z]$ the subgroup of polynomials of degree at most $n$ with the convention $K[z]=\{0\}$ if $n<0$.
\begin{definition}
Let $L=\sum_{j=0}^m (-1)^ja_j(z)\partial_z^j \in K[z,\partial_z]$ with $a_m(z)\neq 0$. 
We assign weights $1$ and $-1$ to $z$ and $\partial_z$ respectively. The {\emph{order}} of $L$ with respect to the weight $(1,-1)$ is defined as 
\[
\ord_{(1,-1)}(L)=\max_{0\le j\le m}\{\deg\,a_j-j\}.
\]
Note that the function $\ord_{(1,-1)}$ on $K[z,\partial_z]$ is additive (see \cite[1.2]{SST}); that is, for any $L_1,L_2\in K[z,\partial_z]$, 
\begin{align} \label{sum of ord}
\ord_{(1,-1)}(L_1L_2)=\ord_{(1,-1)}(L_1)+\ord_{(1,-1)}(L_2).
\end{align}
\end{definition}

Let $n$ be a positive integer.  
Define the $K$-vector space associated with $L$ by
\begin{align*}
V_1(L)&:=\{ f\in 1/z\cdot K[[1/z]] \mid L\cdot f\in K[z] \},\\
V_n(L)&:= {\rm Span}_K\{ \pi(z^kf) \mid  0\le k\le n-1,\ f\in V_1(L)\} \ \ (n\ge2).
\end{align*}
If there are no confusion, we denote $V_n(L)=V_n$.
Note that $\dim_KV_n(L)\le n\cdot \dim_KV_1(L)$.

\medskip

The aim of this section is to show Rodrigues formula for the Pad\'{e}-type approximants of elements of $V_1(L)$ (see Proposition~\ref{Pade}). 
First we study the $K$-vector spaces $V_n(L)$. 

\begin{lemma} \label{lem:linear_indep_equiv}
Let $(f_j)_{1 \le j \le d}$ be a $K$-basis of $V_1(L)$. Then the following are equivalent$:$
\begin{enumerate}
      \item[\rm(i)] For every $n \ge 1$, $\dim_K V_n(L) = n \cdot \dim_K V_1(L)$.
      \item[\rm(ii)] The elements $1, f_1, \dots, f_d$ are linearly independent over $K(z)$.
\end{enumerate}
\end{lemma}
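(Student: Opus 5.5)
The plan is to translate both conditions into statements about polynomial relations among $1,f_1,\dots,f_d$, using only the $K$-linearity of $\pi$ and the fact that $\ker \pi = K[z]$ inside $K((1/z))$. By definition $V_n(L)$ is spanned by $\pi(z^k f)$ with $0\le k\le n-1$ and $f\in V_1(L)$. Since each such $f$ is a $K$-combination of the basis $f_1,\dots,f_d$ and $\pi$ is $K$-linear, $V_n(L)$ is already spanned by the $nd$ elements $\pi(z^k f_j)$ for $0\le k\le n-1$ and $1\le j\le d$. Hence $\dim_K V_n(L)=n\cdot\dim_K V_1(L)$ holds exactly when these $nd$ elements are $K$-linearly independent.

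The key observation is the following. For polynomials $A_j(z)=\sum_{k=0}^{n-1}c_{kj}z^k$ with $\deg A_j\le n-1$, linearity of $\pi$ gives
\[
\sum_{j,k} c_{kj}\,\pi(z^k f_j)=\pi\Bigl(\sum_{j=1}^d A_j(z) f_j(z)\Bigr).
\]
This vanishes if and only if $\sum_j A_j f_j = -B$ for some $B\in K[z]$. So a $K$-linear relation among the $\pi(z^k f_j)$ is the same thing as a relation $B\cdot 1+\sum_j A_j f_j=0$ with polynomial coefficients satisfying $\deg A_j\le n-1$.

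For (ii)$\Rightarrow$(i), fix $n$ and take a vanishing $K$-combination of the $\pi(z^kf_j)$. By the observation it produces a polynomial relation $B+\sum_j A_jf_j=0$. Linear independence over $K(z)$ forces $A_j=0$ for all $j$, hence every $c_{kj}=0$. So the $nd$ spanning elements are independent and $\dim_K V_n(L)=nd$.

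For (i)$\Rightarrow$(ii), suppose $B\cdot 1+\sum_j A_j f_j=0$ with $B,A_j\in K(z)$. Multiplying by a common denominator, we may assume $B$ and all $A_j$ lie in $K[z]$. Choose $n>\max_j \deg A_j$. Applying $\pi$ gives $\pi(\sum_j A_j f_j)=\pi(-B)=0$, which is a $K$-linear relation among the $\pi(z^k f_j)$ with $k\le n-1$. By (i) for this $n$, all coefficients vanish, so $A_j=0$ for every $j$, and then $B=0$.

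There is no serious obstacle here. The only points needing care are that $\ker\pi=K[z]$ and that $n$ must be chosen large enough to absorb the degrees of the $A_j$, which is why (i) is required for every $n$.
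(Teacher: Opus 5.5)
Your proof is correct and takes essentially the same route as the paper. Both arguments reduce condition (i) to the $K$-linear independence of the $nd$ spanning elements $\pi(z^k f_j)$, and both use $\ker\pi = K[z]$ to identify relations among these elements with polynomial relations $B+\sum_j A_j f_j=0$ having $\deg A_j\le n-1$, choosing $n$ larger than the degrees of the $A_j$ in the direction (i)$\Rightarrow$(ii).
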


\begin{proof}
Set $d = \dim_K V_1(L)$. If $d=0$, the statement is trivial, so we assume $d \ge 1$.  
For each $n \in \mathbb{N}$, consider the set
\[
S_n := \{ \pi(z^k f_j) \mid 0 \le k \le n-1, \ 1 \le j \le d \},
\]
which spans $V_n(L)$ over $K$. Since $\#S_n\le dn$, the condition ${\rm{dim}}_K V_n(L) = dn$ in ${\rm{(i)}}$ is equivalent to the assertion that $\#S_n=dn$ and $S_n$ forms a $K$-basis of $V_n(L)$ for all $n \ge 1$.

\medskip 
\noindent \textbf{(i) $\implies$ (ii):} 
Suppose $1, f_1, \dots, f_d$ are linearly dependent over $K(z)$. Then there exists a non-zero vector of polynomials $(P_0, P_1, \dots, P_d) \in K[z]^{d+1} \setminus \{\mathbf{0}\}$ such that 
\begin{equation} \label{eq:linear_relation}
P_0(z) + \sum_{j=1}^d P_j(z) f_j = 0.
\end{equation}
Let $n-1 = \max_{1 \le j \le d} \{ \deg P_j \}$ and write $P_j(z) = \sum_{k=0}^{n-1} p_{j,k} z^k$. Note that if all $P_1, \dots, P_d$ were zero, then $P_0$ would also be zero, contradicting our assumption. Applying the projection $\pi$ to equation \eqref{eq:linear_relation}, we obtain
\[
\sum_{j=1}^d \sum_{k=0}^{n-1} p_{j,k} \pi(z^k f_j) = 0.
\]
By the $K$-linear independence of $S_n$ (which follows from (i)), we must have $p_{j,k} = 0$ for all $j, k$. Thus $P_1 = \dots = P_d = 0$, which implies $P_0 = 0$, a contradiction.

\medskip
\noindent \textbf{(ii) $\implies$ (i):} 
Conversely, if (i) fails for some $n$, the set $S_n$ is $K$-linearly dependent. Then there exist $p_{j,k} \in K$, not all zero, such that $\sum_{j,k} p_{j,k} \pi(z^k f_j) = 0$. This implies $P_0 + \sum_{j=1}^d P_j f_j = 0$ for some $P_0 \in K[z]$, where $P_j(z) = \sum p_{j,k} z^k$ are not all zero. This contradicts (ii).
\end{proof}

Let $L=\sum_{j=0}^m (-1)^ja_j(z)\partial_z^j \in K[z,\partial_z]$ with $a_m(z)\neq 0$. 
Now we put $\deg\,a_j=m_j$ and
\begin{align} \label{L aj}
a_j(z)=\sum_{i=0}^{m_j}a_{i,j}z^i.
\end{align}
Consider the linear recurrence relations:
\begin{align} \label{recurrence}
\sum_{j=0}^m\sum_{i=\max\{0,j-k\}}^{m_j}a_{i,j}(k+i-j+1)_jx_{k+i-j}=0 \ \ \ (k=0,1,2,\ldots,).
\end{align}
Denote the $K$-space of solutions of equation~\eqref{recurrence} by 
\[
\mathcal{V}_1(L)=\{(x_k)_{k\ge0}\in K^{\N} \mid (x_k)_k \ \text{satisfies equation}~\eqref{recurrence}.\}.
\]
\begin{lemma} \label{cond dim>0}
The $K$-morphism:
\[
V_1(L)\longrightarrow \mathcal{V}_1(L); \ f(z)=\sum_{k=0}^{\infty}\dfrac{f_k}{z^{k+1}}\mapsto (f_k)_{k\ge0},
\]
is an isomorphism.
\end{lemma}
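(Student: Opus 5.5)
The map $f\mapsto (f_k)_{k\ge0}$ is clearly $K$-linear and injective on all of $1/z\cdot K[[1/z]]$; in fact it is a bijection between $1/z\cdot K[[1/z]]$ and $K^{\N}$. So the whole content of the lemma is one equivalence: for $f=\sum_k f_k z^{-k-1}$ we have $L\cdot f\in K[z]$ if and only if $(f_k)_k$ satisfies \eqref{recurrence}. I will prove this by Corollary~\ref{corollary inc}, which turns the condition $L\cdot f\in K[z]$ into the condition $\varphi_f\circ L^{*}=0$ on $K[t]$. Since $\varphi_f$ is linear and $K[t]$ is spanned by the monomials $t^k$, this amounts to
\[
\varphi_f\bigl(L^{*}\cdot t^k\bigr)=0 \quad\text{for all } k\ge0 .
\]

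The key step is to compute $L^{*}\cdot t^k$ explicitly. Write $L=\sum_{j=0}^m(-1)^ja_j(z)\partial_z^j$. The definition \eqref{iota diff} gives $L^{*}=\sum_j(-1)^j(-1)^j\partial_t^j a_j(t)=\sum_j\partial_t^j a_j(t)$, so the signs cancel. Substituting \eqref{L aj}, we get
\[
L^{*}\cdot t^k=\sum_{j=0}^m\sum_{i=0}^{m_j}a_{i,j}\,\partial_t^j t^{k+i}.
\]
Now $\partial_t^j t^{k+i}=(k+i)(k+i-1)\cdots(k+i-j+1)\,t^{k+i-j}$. With the rising factorial $(x)_j=x(x+1)\cdots(x+j-1)$, this product equals $(k+i-j+1)_j$. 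It vanishes exactly when $k+i-j<0$, which is why the sum may be restricted to $i\ge\max\{0,j-k\}$.

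Finally I apply $\varphi_f$, using $\varphi_f(t^{n})=f_{n}$. This gives
\[
\varphi_f(L^{*}\cdot t^k)=\sum_{j=0}^m\sum_{i=\max\{0,j-k\}}^{m_j}a_{i,j}(k+i-j+1)_j\,f_{k+i-j},
\]
which is precisely the left-hand side of \eqref{recurrence} evaluated at $x=(f_k)_k$. Hence $f\in V_1(L)$ if and only if $(f_k)_k\in\mathcal{V}_1(L)$. So the map is well defined into $\mathcal{V}_1(L)$ and injective. It is also surjective: any solution $(x_k)_k\in\mathcal{V}_1(L)$ is the image of $f=\sum_k x_kz^{-k-1}$, which lies in $V_1(L)$ by the same equivalence.

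There is no serious obstacle here. The only delicate points are bookkeeping: checking that the signs cancel in $\iota(L)$, and matching the falling-factorial coefficient of $\partial_t^j t^{k+i}$ with the notation $(k+i-j+1)_j$ together with the lower summation limit.
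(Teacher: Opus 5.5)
Your proof is correct, but it takes a different route from the paper. The paper works directly on the $z$-side. It expands $L\cdot f=\sum_{j,i,k}a_{i,j}(k+1)_j f_k\, z^{i-j-k-1}$, using $(-1)^j\partial_z^j z^{-k-1}=(k+1)_j z^{-k-j-1}$. It then splits off the polynomial part $A(z)$, which has degree at most $\ord_{(1,-1)}(L)-1$, and reads off that the coefficient of $z^{-k-1}$ is the left-hand side of \eqref{recurrence}.

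You instead pass to the $t$-side via Corollary~\ref{corollary inc}, so that membership in $V_1(L)$ becomes $\varphi_f(L^{*}\cdot t^k)=0$ for all $k$, and then you compute $L^{*}\cdot t^k$. Your sign cancellation in $\iota(L)$ is right, as is your identification of the falling factorial with $(k+i-j+1)_j$, including the cutoff $i\ge\max\{0,j-k\}$.

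The two arguments are really the same coefficient comparison seen from opposite sides of the pairing. Proposition~\ref{key prop} says exactly that the coefficient of $z^{-k-1}$ in $\pi(L\cdot f)$ is $\varphi_f(L^{*}\cdot t^k)$. The paper's version is self-contained and also exhibits the polynomial part explicitly. Yours is shorter but leans on Proposition~\ref{key prop}, which is imported from the literature rather than proved here. In exchange, it makes visible that the recurrence coefficients are precisely the coefficients of $L^{*}\cdot t^k$. That is the same expansion the paper later uses in Lemma~\ref{equiv of (P)} to tie the leading term to property $(P)$.
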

\begin{proof}
Put $d=\ord_{(1,-1)}(L)$ and let
\[
f(z)=\sum_{k=0}^{\infty}\frac{f_k}{z^{k+1}}\in \frac{1}{z}\,K[[1/z]].
\]
A straightforward computation shows that
\[
L\cdot f(z)
= A(z)
+ \sum_{k=0}^{\infty}
\frac{
\sum_{j=0}^m
\sum_{i=\max\{0,j-k\}}^{m_j}
a_{i,j}(k+i-j+1)_j\, f_{k+i-j}
}{z^{k+1}},
\]
where
\[
A(z)
=
\sum_{j=0}^m
\sum_{i=j+1}^{m_j}
\sum_{k=0}^{i-j-1}
a_{i,j}(k+1)_j\, f_k\,
z^{\,i-j-k-1}
\in K[z]_{\le d-1}.
\]
Consequently, the condition $f(z)\in V_1(L)$ is equivalent to the fact that the sequence
$(f_k)_{k\ge 0}$ satisfies the recurrence relation \eqref{recurrence} for all $k\ge 0$.
This completes the proof.
\end{proof}
\begin{definition} \label{Rodrigues ideal}
For a positive integer $n$, we define the left ideal $I_n(L) \subset K[z, \partial_z]$ associated with $L$ as
\[
I_n(L) := \{ R \in K[z, \partial_z] \mid R \cdot f \in K[z] \text{ for all } f \in V_n(L)\}.
\]
The \textit{$n$th Rodrigues ideal} associated with $L$, denoted by $I_n^*(L)$, is the right ideal of $K[t, \partial_t]$ obtained as the image of $I_n(L)$ under the formal adjoint map $\iota$ defined in \eqref{iota diff}:
\[
I^*_n(L) := \{ \iota(R) = R^* \mid R \in I_n(L) \}.
\]
\end{definition}
\begin{lemma} \label{equivalence R in I_n(L)}
Let $R\in K[z,\partial_z]$. The following are equivalent. 
\begin{enumerate}
      \item[{\rm(i)}] The operator $R$ belongs to $I_n(L)$.
      \item[{\rm(ii)}] For any integer $k$ with $0\le k \le n-1$ and $f\in V_1(L)$, we have $t^kR^*\cdot K[t]\subseteq \ker\,\varphi_f$.
\end{enumerate}
\end{lemma}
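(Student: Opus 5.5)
The plan is to reduce both conditions to statements about the linear forms $\varphi_{\pi(z^kf)}$ and then apply Corollary~\ref{corollary inc} and Proposition~\ref{key prop}.

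\emph{Step 1 (reduce (i) to generators).} By definition, $V_n(L)$ is spanned over $K$ by the series $\pi(z^kf)$ with $0\le k\le n-1$ and $f\in V_1(L)$. The condition $R\cdot g\in K[z]$ is $K$-linear in $g$. Hence $R\in I_n(L)$ if and only if $R\cdot \pi(z^kf)\in K[z]$ for every such $k$ and $f$.

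\emph{Step 2 (translate via Corollary~\ref{corollary inc}).} Each $\pi(z^kf)$ lies in $1/z\cdot K[[1/z]]$, so Corollary~\ref{corollary inc} applies with $\pi(z^kf)$ in place of $f$. It shows that $R\cdot\pi(z^kf)\in K[z]$ is equivalent to $R^*\cdot K[t]\subseteq \ker\,\varphi_{\pi(z^kf)}$. Equivalently, $\varphi_{\pi(z^kf)}\circ R^*=0$ on $K[t]$.

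\emph{Step 3 (identify $\varphi_{\pi(z^kf)}$).} I will show that
\[
\varphi_{\pi(z^kf)}=\varphi_f\circ t^k,
\]
where $t^k$ denotes multiplication by $t^k$ on $K[t]$. This follows from Proposition~\ref{key prop} applied to the operator $L=z^k$, since $(z^k)^*=t^k$ by the definition of $\iota$. As a sanity check, it can also be verified directly on coefficients. If $f=\sum_{l} f_l z^{-l-1}$, then $\pi(z^kf)=\sum_{l\ge0} f_{l+k}z^{-l-1}$. Thus $\varphi_{\pi(z^kf)}(t^l)=f_{l+k}=\varphi_f(t^{k}t^l)$.

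\emph{Step 4 (combine).} Putting Steps 1–3 together, $R\in I_n(L)$ if and only if $\varphi_f(t^kR^*\cdot P)=0$ for all $P\in K[t]$, all $0\le k\le n-1$ and all $f\in V_1(L)$. This is precisely $t^kR^*\cdot K[t]\subseteq\ker\,\varphi_f$, which is condition (ii). Every step is an equivalence, so both directions follow at once.

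There is no real obstacle. The only point needing care is the identity in Step 3: one must check that the operator $t^k$ produced by the adjoint is applied after $R^*$, i.e. that the composite is $\varphi_f\circ t^k\circ R^*$, which is what (ii) literally says.
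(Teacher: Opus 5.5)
Your proof is correct and is essentially the paper's argument. Both proofs rest on Corollary~\ref{corollary inc}; the paper applies it to the operator $Rz^k$ acting on $f$ (using $(Rz^k)^*=t^kR^*$) and then passes to $R\cdot\pi(z^kf)$ via \eqref{pi circ L}, while you apply it to $R$ acting on $\pi(z^kf)$ and use Proposition~\ref{key prop} with the operator $z^k$ to get $\varphi_{\pi(z^kf)}=\varphi_f\circ t^k$.
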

\begin{proof}
Let $k$ be an integer with $0\le k \le n-1$ and $f\in V_1(L)$.
Corollary~\ref{corollary inc} implies that the inclusion $t^kR^*\cdot K[t]\subseteq  \ker\,\varphi_f$ is equivalent to $Rz^k\cdot f\in K[z]$, which is in turn equivalent to $R\cdot \pi(z^kf)\in K[z]$. 
These equivalences establish the relationship between ${\rm(i)}$ and ${\rm(ii)}$.
\end{proof}
\begin{example} \label{order 1}
Let us take order $1$ differential operator $L=-a_1(z)\partial_z+a_0(z)\in K[z,\partial_z]$. For a positive integer $n$, we define
\[
R_n=a^n_1(z)\left(-\partial_z+\dfrac{a_0(z)}{a_1(z)}\right)^n.
\]
The classical Rodrigues formula states that, the formal adjoint of $R_n$,
\[
R^{*}_n=\left(\partial_t+\dfrac{a_0(t)}{a_1(t)}\right)^na^n_1(t)
\]
belongs to the $n$th Rodrigues ideal $I^*_n(L)$ (see \cite[Theorem~4.2]{Kaw}).
\end{example}

In the next proposition, we show that for any nonzero $R\in I_n(L)$, the formal adjoint $R^*$ yields Pad\'{e}-type approximants for any $K$-basis of $V_1(L)$.
We define the evaluation map ${\rm{Eval}}_{t=z}$ by
\[
{\rm{Eval}}_{t=z}: K[t]\longrightarrow K[z]; \ P(t)\mapsto P(z).
\]
\begin{proposition}\label{Pade}
Assume that $\dim_K V_1(L)>0$, and set $d:=\dim_K V_1(L)$.
\begin{enumerate}
      \item[{\rm{\rm(i)}}] For every positive integer $n$, the left ideal $I_n(L)$ is nonzero.
      \item[{\rm(ii)}] Let $f_1,\ldots,f_d$ be a $K$-basis of $V_1(L)$, and let $A(t)\in K[t]$.  
Let $R\in I_n(L)\setminus\{0\}$. For a nonzero element $R^{*}$ of the $n$th Rodrigues ideal associated with $L$, we define polynomials
\[
P(z):=\mathrm{Eval}_{t=z}\left( R^*\cdot A(t)\right), 
\qquad 
Q_j(z):=\varphi_{f_j}\!\left(\frac{P(z)-P(t)}{z-t}\right)
\quad (1\le j\le d).
\]
Assume that $P(z)\neq 0$.  
Then the $(d+1)$-tuple of polynomials $(P,Q_1,\ldots,Q_d)$ is a weight $n$ Pad\'{e}-type approximant of $(f_1,\ldots,f_d)$.
      \item[{\rm(iii)}] Let
\[
\mathfrak{R}_j(z):=P(z)f_j(z)-Q_j(z)
\qquad (1\le j\le d)
\]
denote the corresponding Pad\'{e}-type remainders.  
Then we have
\[
\mathfrak{R}_j(z)
 = \sum_{k=n}^{\infty}\frac{\varphi_{f_j}(t^kP(t))}{z^{k+1}},
 \qquad 1\le j\le d.
\]
\end{enumerate}
\end{proposition}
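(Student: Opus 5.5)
Parts (ii) and (iii) follow from Lemma~\ref{equivalence R in I_n(L)} and Lemma~\ref{equivalence Pade}. Part (i) needs a construction, and I treat it last.

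For (ii), let $R\in I_n(L)\setminus\{0\}$ and $A(t)\in K[t]$. By Lemma~\ref{equivalence R in I_n(L)}, for every $f\in V_1(L)$ and $0\le k\le n-1$ we have $t^kR^*\cdot K[t]\subseteq\ker\varphi_f$. In particular $t^kP(t)=t^k(R^*\cdot A(t))\in\ker\varphi_{f_j}$ for all $j$ and all $0\le k\le n-1$. Put $M:=\deg P$. Then $P\neq0$, $\deg P\le M$, and the $Q_j$ are defined exactly as in Lemma~\ref{equivalence Pade}. That lemma, with $\boldsymbol n=(n,\ldots,n)$, then gives that $(P,Q_1,\ldots,Q_d)$ is a weight $n$ Pad\'e-type approximant. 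The degree hypothesis $M\ge dn$ of Lemma~\ref{equivalence Pade} is only used there for the existence count. If one insists on it, I would note that the implication (ii)$\Rightarrow$(i) in that lemma is a direct computation valid for any degree: the $1/z^{k+1}$ coefficient of $Pf_j-Q_j$ is $\varphi_{f_j}(t^kP)$. So I would argue directly via (iii).

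For (iii), expand $1/(z-t)=\sum_{k\ge0}t^k/z^{k+1}$ and use $f_j(z)=\varphi_{f_j}(1/(z-t))$. This gives
\[
P(z)f_j(z)-Q_j(z)=\varphi_{f_j}\!\left(\frac{P(t)}{z-t}\right)=\sum_{k\ge0}\frac{\varphi_{f_j}(t^kP(t))}{z^{k+1}},
\]
where $\varphi_{f_j}$ is extended $K((1/z))$-linearly. The terms with $k\le n-1$ vanish by the kernel property above, which proves (iii). It also proves ${\rm ord}_\infty(Pf_j-Q_j)\ge n+1$, and hence (ii) independently of any degree condition.

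For (i), the main point is to exhibit a nonzero $R$ killing the finite-dimensional space $V_n(L)$ modulo $K[z]$. I would take an annihilator of $L$-type. Write $L\cdot f=p_f\in K[z]$ for $f\in V_1(L)$, and note that $L z^k\cdot f=z^kL\cdot f+[L,z^k]\cdot f$. A cleaner approach is to use $R:=\partial_z^{N}L^{(n)}$, built by induction. Let $g=\pi(z^kf)=z^kf-c(z)$ with $c\in K[z]$. Then $L\cdot g$ is a polynomial plus $[L,z^k]\cdot f$, where $[L,z^k]$ has order below that of $L$ in $\partial$. Instead, I would use linear algebra. $V_n$ is finite-dimensional, say spanned by $g_1,\ldots,g_e$. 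Choose an operator $R=\sum_{j\le e}b_j(z)\partial_z^j$ with unknown polynomial coefficients of degree $\le D$. The condition $R\cdot g_i\in K[z]$ means the coefficients of $z^{-1},\ldots,z^{-D'}$ vanish for each $i$. The concern is that this is infinitely many conditions, so the count has to be arranged differently: instead, I would take $R$ to be the operator $\mathrm{Wr}$-type annihilator $R\cdot y=W(y,g_1,\ldots,g_e)/W(g_1,\ldots,g_e)$, cleared of denominators. This is the main obstacle, because the $g_i$ need not be holonomic a priori in a controlled way. I would try first to show that each $g_i$ is annihilated modulo $K[z]$ by $\partial_z^{s}L z^k$ (killing the polynomial $L\cdot(z^kf)$ with $\partial^s$, $s$ large), and that $\pi$ commutes appropriately by \eqref{pi circ L}. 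I would then take $R=\partial_z^{s}\prod$-type left lcm of these finitely many operators, which exists and is nonzero since $K(z)[\partial_z]$ is a left Euclidean domain. Clearing denominators gives a nonzero $R\in I_n(L)$.
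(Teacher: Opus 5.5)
Your treatment of (ii) and (iii) is correct and is the paper's argument: the kernel property from Lemma~\ref{equivalence R in I_n(L)}, followed by expanding $1/(z-t)$. Reading ${\rm ord}_\infty(Pf_j-Q_j)\ge n+1$ directly off the expansion in (iii) is a harmless way to bypass the degree bookkeeping of Lemma~\ref{equivalence Pade}.

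The gap is in (i). The construction you settle on rests on the operator $\partial_z^{s}Lz^k$ and on ``killing the polynomial $L\cdot(z^kf)$''. But $L\cdot(z^kf)$ is in general \emph{not} a polynomial: $L\cdot(z^kf)=z^k\,L\cdot f+[L,z^k]\cdot f$, and the commutator term has no reason to lie in $K[z]$. For example, $L=z(z-1)\partial_z$ and $f=\log(1-1/z)$ give $L\cdot f=1$ but $L\cdot(zf)=z(z-1)f+z\notin K[z]$. So for $g=\pi(z^kf)$ with $k\ge 1$ you never exhibit any nonzero operator sending $g$ into $K[z]$. Without such operators the least-common-left-multiple step has nothing to act on; your worry that the $g_i$ might not be ``holonomic in a controlled way'' is a symptom of this.

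The missing idea is to conjugate by $z^{-k}$ instead of $z^{k}$. If $m$ is the $\partial_z$-order of $L$, then $z^{m+k}Lz^{-k}$ is a nonzero element of $K[z,\partial_z]$, and $z^{m+k}Lz^{-k}\cdot(z^kf)=z^{m+k}\,L\cdot f\in K[z]$ for all $f\in V_1(L)$ at once. By \eqref{pi circ L}, $R\cdot\pi(z^kf)\in K[z]$ if and only if $R\cdot(z^kf)\in K[z]$. Hence each left ideal $J_k(L)=\{R\in K[z,\partial_z] \mid R\cdot z^kf\in K[z]\ \text{for all}\ f\in V_1(L)\}$ is nonzero, and $I_n(L)=\bigcap_{k=0}^{n-1}J_k(L)$. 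The paper then concludes from the left Ore property of the Weyl algebra.

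Your route also works once this operator is in hand:
\begin{enumerate}
\item[(a)] $\partial_z^{s}z^{m+k}Lz^{-k}$ with $s$ large annihilates $z^kf_j$ exactly;
\item[(b)] the least common left multiple in $K(z)[\partial_z]$ of these finitely many operators is nonzero;
\item[(c)] clearing denominators on the left gives a nonzero element of $I_n(L)$.
\end{enumerate}
It simply takes a detour through exact annihilators over $K(z)[\partial_z]$, which the paper's argument avoids by staying inside the Weyl algebra.
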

\begin{proof}
${\rm(i)}$ Note that the Weyl algebra $K[z,\partial_z]$ is a left Ore domain; that is, any two nonzero left ideals $I,J$ of $K[z,\partial_z]$ satisfy $I\cap J\neq 0$ (see \cite[$8.4$~Proposition]{Bj}).
For a nonnegative integer $k$, we define the left ideal $J_k(L)\subseteq K[z,\partial_z]$ by 
\[
J_k(L)=\{R\in K[z,\partial_z]\mid R\cdot z^kf\in K[z] \ \text{for all} \ f\in V_1(L)\}.
\]
Notice that the nonzero differential operator $z^{m+k}Lz^{-k}$ belongs to the left ideal $J_k(L)$. Applying equation \eqref{pi circ L}, we obtain
\[
I_n(L)=\bigcap_{k=0}^{n-1} J_k(L).
\]
Consequently, the left ideal $I_n(L)$ is also nonzero, which completes the proof. 

\medskip
\noindent
${\rm(ii)}$
By Lemma~\ref{equivalence Pade}, it suffices to show that
\begin{align} \label{orthogonal fj}
\varphi_{f_j}(t^kP(t))=0
\qquad (0\le k\le n-1).
\end{align}
Applying the identity $P(t)=R^{*}\cdot A(t)$ and invoking condition $({\rm{ii}})$ from Lemma~\ref{equivalence R in I_n(L)}, we obtain:
\begin{align*} 
\varphi_{f_j}(t^kP(t))
 = \varphi_{f_j}\circ t^k R^*(A(t)) 
 = 0,
\end{align*}
which completes the proof.

\medskip
\noindent
${\rm(iii)}$ Using the identity $f_j(z)=\varphi_{f_j}\left(1/(z-t)\right)$ together with the definition of $Q_j$, we have
\[
\mathfrak{R}_j(z) = \varphi_{f_j}\!\left(\frac{P(t)}{z-t}\right).
\]
Expanding the right-hand side via the identity $1/(z-t)=\sum_{k=0}^{\infty}t^k/z^{k+1}$ with equation~\eqref{orthogonal fj} yields 
\[
\mathfrak{R}_j(z)
 = \sum_{k=n}^{\infty}\frac{\varphi_{f_j}(t^kP(t))}{z^{k+1}},
\]
which completes the proof.
\end{proof}

\medskip

In the following, let us consider a condition of the differential operator $L$ so that $\dim_KV_1(L)=\ord_{(1,-1)}(L)$ and some properties of such operator.
\begin{definition} \label{property P}
We use the notation defined in \eqref{L aj}. Put $d=\ord_{(1,-1)}(L)$ and assume $d\ge 1$.  
We say that the differential operator $L$ has \emph{property $(P)$} if the following holds:
\[
\sum_{\substack{0\le j \le m\\ m_j-j=d}}a_{m_j,j}(k+d+1)_{m_j}\neq 0 \ \text{for any} \ k\ge0.
\]
\end{definition}

\begin{lemma} \label{equiv of (P)}
Let $L\in K[z,\partial_z]$. Put $d=\ord_{(1,-1)}(L)$ and assume $d\ge 1$. The following are equivalent.
\begin{enumerate}
      \item[{\rm{\rm(i)}}] $L$ has property $(P)$.
      \item[{\rm(ii)}] For every polynomial $P(t)\in K[t]$, we have $\deg\, L^*\cdot P = {\rm deg}\,P + d$.
\end{enumerate}
\end{lemma}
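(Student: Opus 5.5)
The plan is to compute $L^*$ explicitly on the monomial basis $t^k$ of $K[t]$ and read off the coefficient of the top possible power. With the notation \eqref{L aj}, the definition \eqref{iota diff} of the formal adjoint gives
\[
L^*=\sum_{j=0}^m \partial_t^j\, a_j(t),
\qquad
L^*\cdot t^k=\sum_{j=0}^m\sum_{i=0}^{m_j} a_{i,j}\,\partial_t^j\, t^{k+i}.
\]
Using $\partial_t^j t^{k+i}=(k+i-j+1)_j\,t^{k+i-j}$, where $(x)_j=x(x+1)\cdots(x+j-1)$ is the rising factorial as in \eqref{recurrence}, each term has degree $k+i-j\le k+m_j-j\le k+d$ with $d=\ord_{(1,-1)}(L)$. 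Terms with $k+i-j<0$ vanish, and for these the product $(k+i-j+1)_j$ is zero automatically.

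The exponent $k+d$ is attained only when $i=m_j$ and $m_j-j=d$. Hence $\deg L^*\cdot t^k\le k+d$, and the coefficient of $t^{k+d}$ is
\[
c_k:=\sum_{\substack{0\le j\le m\\ m_j-j=d}} a_{m_j,j}\,(k+d+1)_j .
\]
The factor here is $(k+m_j-j+1)_j=(k+d+1)_j$; this is the quantity appearing in Definition~\ref{property P}, where I read the Pochhammer index as the order $j$ of differentiation. The first step is to carry out this bookkeeping carefully, since it is the only real content of the argument. Property $(P)$ is then precisely the statement $c_k\neq 0$ for all $k\ge 0$.

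For (i)$\Rightarrow$(ii), let $P(t)=\sum_{k\le n}p_kt^k$ with $p_n\ne0$. By linearity, $L^*\cdot P=\sum_k p_k L^*\cdot t^k$. Every summand with $k<n$ has degree at most $k+d<n+d$. The summand with $k=n$ contributes $p_nc_n t^{n+d}$, which is nonzero by property $(P)$. Hence $\deg L^*\cdot P=n+d$. For $P=0$, both sides are $-\infty$ under the usual convention, so nothing needs checking.

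For (ii)$\Rightarrow$(i), apply (ii) to $P=t^k$. Since $\deg L^*\cdot t^k=k+d$ and its coefficient of $t^{k+d}$ is $c_k$, we get $c_k\ne0$ for every $k\ge0$, which is property $(P)$. The main obstacle is only the index bookkeeping that identifies the top coefficient with the sum in Definition~\ref{property P}; no earlier result is needed beyond the formula for $\iota$.
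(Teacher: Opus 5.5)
Your proposal is correct and follows the paper's argument: compute $L^*\cdot t^k=\sum_{j}\sum_i a_{i,j}(k+i-j+1)_j\,t^{k+i-j}$, isolate the coefficient of $t^{k+d}$, and read off the equivalence. You also spell out the leading-term step for a general $P$, which the paper leaves implicit. Your reading of the Pochhammer index as $j$ is the right one: the $(k+d+1)_{m_j}$ written in Definition~\ref{property P} and in the paper's proof is a slip, since $\partial_t^j t^{k+m_j}=(k+d+1)_j\,t^{k+d}$. With $m_j$ taken literally the lemma can fail, e.g.\ for $L=-2z-z^2\partial_z$, where that condition holds but $L^*\cdot 1=0$.
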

\begin{proof}
Since we have
\[
L^{*}=\sum_{j=0}^{m}\partial^j_t\sum_{i=0}^{m_j}a_{i,j}t^j\in K[t,\partial_t],
\]
for any nonnegative integer $k$, we have
\begin{align*}
L^{*}(t^k)&=\sum_{j=0}^m\sum_{i=0}^{m_j}a_{i,j}(k+i-j+1)_jt^{k+i-j}\\
&=\sum_{\substack{0\le j \le m\\ m_j-j=d}}a_{m_j,j}(k+d+1)_{m_j}t^{k+d}+(\text{lower degree terms}).
\end{align*}
These equalities imply that ${\rm(i)}$ is equivalent to ${\rm(ii)}$.
\end{proof}

\begin{lemma} \label{property P implies}
Let $L\in K[z,\partial_z]$. Put $d=\ord_{(1,-1)}(L)$. Assume $d\ge 1$ and $L$ satisfies property $(P)$.  
Then the following statements hold.
\begin{enumerate}
      \item[{\rm{\rm(i)}}]  $\dim_K V_1(L) = d$. 
      \item[{\rm(ii)}]  For any $K$-basis $f_1,\ldots,f_{d}$ of $V_1(L)$, we have
\[
\bigcap_{j=1}^{d} \ker \,\varphi_{f_j}
= L^*\cdot K[t].
\]
\end{enumerate}
\end{lemma}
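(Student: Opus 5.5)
Both parts will follow from the isomorphism $\Phi$ in \eqref{Phi}, together with Corollary~\ref{corollary inc} and Lemma~\ref{equiv of (P)}. The idea is to identify $V_1(L)$ with the space of linear functionals on $K[t]$ that vanish on the subspace $W:=L^*\cdot K[t]$, and then to compute the codimension of $W$.

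\textbf{Step 1 (codimension of $W$).} By Lemma~\ref{equiv of (P)}, property $(P)$ gives $\deg L^*\cdot P=\deg P+d$ for every nonzero $P$. In particular $L^*$ is injective on $K[t]$. The images $L^*\cdot t^k$ ($k\ge0$) have pairwise distinct degrees $k+d$, so they are linearly independent. The span of those with $k\le N-d$ is a subspace of $K[t]_{\le N}$ of dimension $N-d+1$, and together these images span $W$. Since the degrees realised in $W$ are exactly the integers $\ge d$, the quotient $K[t]/W$ has basis the classes of $1,t,\ldots,t^{d-1}$. In other words $K[t]=K[t]_{\le d-1}\oplus W$, and $\dim_K K[t]/W=d$.

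\textbf{Step 2 (part (i)).} By Corollary~\ref{corollary inc}, an element $f\in 1/z\cdot K[[1/z]]$ lies in $V_1(L)$ if and only if $\varphi_f$ vanishes on $W$. Hence $\Phi$ restricts to an isomorphism from $V_1(L)$ onto the annihilator $W^\perp=\{\varphi\in{\rm Hom}_K(K[t],K)\mid \varphi(W)=0\}$. This annihilator is canonically ${\rm Hom}_K(K[t]/W,K)$, which has dimension $d$ by Step~1. Therefore $\dim_K V_1(L)=d$.

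\textbf{Step 3 (part (ii)).} Take a basis $f_1,\ldots,f_d$ of $V_1(L)$. The inclusion $W\subseteq\bigcap_j\ker\varphi_{f_j}$ is immediate from Corollary~\ref{corollary inc}. For the reverse inclusion, note that $\varphi_{f_1},\ldots,\varphi_{f_d}$ form a basis of $W^\perp$, because $\Phi$ is injective. By finite-dimensional duality, the common kernel of a basis of $(K[t]/W)^\vee$ is zero in $K[t]/W$. So any $P$ killed by all $\varphi_{f_j}$ maps to $0$ in $K[t]/W$, that is, $P\in W$. Concretely, write $P=P_0+w$ with $P_0\in K[t]_{\le d-1}$ and $w\in W$. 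Then $\varphi_{f_j}(P_0)=0$ for all $j$. If $P_0\neq0$, we could extend a functional that is nonzero on $P_0$ and zero on $W$, obtaining an element of $W^\perp$ outside the span of the $\varphi_{f_j}$, which contradicts Step~2.

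The only step needing care is Step~1, namely that $K[t]/W$ has dimension exactly $d$. This rests entirely on the exact degree shift supplied by Lemma~\ref{equiv of (P)}, so I expect no genuine obstacle. One should still check the edge case that $W$ contains no nonzero polynomial of degree $<d$, and this also follows from the degree formula.
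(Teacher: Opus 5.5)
Your proof is correct. Part (ii) is essentially the paper's argument in coordinate-free form. The paper also uses Lemma~\ref{equiv of (P)} to reduce to showing $\bigcap_j\ker\varphi_{f_j}\cap K[t]_{\le d-1}=\{0\}$. It does so by proving that the matrix $\mathcal{M}_0=(f_{j,k})_{1\le j\le d,\,0\le k\le d-1}$ is invertible, which is exactly your statement that the $\varphi_{f_j}$ restrict to a basis of the dual of $K[t]_{\le d-1}\cong K[t]/L^*\cdot K[t]$.

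The genuine difference is in part (i). There the paper does not invoke Corollary~\ref{corollary inc}. Instead it passes through Lemma~\ref{cond dim>0}, which identifies $V_1(L)$ with the solution space of the recurrence \eqref{recurrence}. Property $(P)$ then lets it solve for $c_{k+d}$ in terms of $c_0,\ldots,c_{k+d-1}$, so solutions are freely parametrized by $c_0,\ldots,c_{d-1}$. That parametrization (each $f\in V_1(L)$ is determined by its first $d$ Laurent coefficients) is reused in (ii) to get $\det\mathcal{M}_0\neq 0$.

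The $k$th equation of \eqref{recurrence} is precisely $\varphi_f(L^*\cdot t^k)=0$. So both arguments rest on the same triangularity, namely that the leading coefficient of $L^*\cdot t^k$ is nonzero. Your version packages this as the single decomposition $K[t]=K[t]_{\le d-1}\oplus L^*\cdot K[t]$ together with $V_1(L)\cong (K[t]/L^*\cdot K[t])^\vee$. From these, (i) and (ii) both follow by pure linear algebra, with no recurrence or coefficient bookkeeping. The paper's route is more explicit, and it exhibits the concrete parametrization of $V_1(L)$ by initial coefficients.
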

\begin{proof}
${\rm(i)}$ Since $L$ satisfies the property $(P)$ together with Lemma~\ref{cond dim>0}, the $K$-linear map:
\begin{align} \label{isom}
K^d\longrightarrow \mathcal{V}_1(L); \ (c_0,\ldots,c_{d-1}) \mapsto (c_k)_k,
\end{align}
where $c_{k+d}$ for $k\ge 0$ are determined by
\[
c_{d+k}=
-\dfrac{C_k(c_0,\ldots,c_{d+k-1})}
{\sum_{\substack{0\le j \le m\\ m_j-j=d}}a_{m_j,j}(k+d+1)_{m_j}},
\]
where 
\[
C_k(c_0,\ldots,c_{d+k-1})=\sum_{\substack{0\le j \le m \\ m_j-j<d}}a_{m_j,j}(k+m_j-j+1)_jc_{k+m_j-j}+\sum_{j=0}^{m_j}\sum_{i=\max\{0,j-k\}}^{m_j-1}a_{i,j}(k+i-j+1)_jc_{k+i-j}
\]
is a $K$-isomorphism. This shows that $\dim_KV_1(L)=d$.

\medskip

${\rm(ii)}$ Denote by
\[
W = \bigcap_{j=1}^{d} \ker \,\varphi_{f_j}
\]
the corresponding $K$-vector space.
Since $L \cdot f_j \in K[z]$ for all $j$, Corollary~\ref{corollary inc} implies that
\[
L^{*}\cdot K[t] \subseteq W.
\]
We now prove the reverse inclusion.
Let $P(t)\in W$. By the equivalence condition ${\rm(ii)}$ in Lemma~\ref{equiv of (P)}, there exists a polynomial
\(\widetilde{P}(t)\in L^{*}\cdot K[t]\) such that
$
P(t)-\widetilde{P}(t)\in K[t]_{\le d-1}
$.
Hence, it suffices to establish the equality
\begin{equation} \label{determine ker}
W \cap K[t]_{\le d-1}=\{0\}.
\end{equation}

Write
$
f_j(z)=\sum_{k=0}^{\infty}f_{j,k}/z^{k+1}
$ for $1\le j \le d$,
and define the matrix
\[
\mathcal{M}_0 :=
\begin{pmatrix}
f_{1,0} & \cdots & f_{1,d-1}\\
\vdots  & \ddots & \vdots\\
f_{d,0} & \cdots & f_{d,d-1}
\end{pmatrix}
\in \Mat_d(K).
\]
Since each Laurent series $f_j$ is uniquely determined by the coefficients
$(f_{j,k})_{0\le k \le d-1}$ (cf.~the $K$-isomorphism~\eqref{isom}),
and since the Laurent series $\{f_j(z)\}_{1\le j \le d}$ are linearly independent over $K$,
it follows that $\det \,\mathcal{M}_0 \neq 0$.
Suppose that there exists a nonzero polynomial
\[
P(t)=\sum_{j=0}^{d-1} p_j t^j \in W \cap K[t]_{\le d-1}.
\]
By the linearity of the maps $\varphi_{f_j}$ and the assumption $P(t)\in W$,
the nonzero vector $\boldsymbol{p}:={}^{t}(p_0,\ldots,p_{d-1})$ satisfies
$
\mathcal{M}_0 \cdot \boldsymbol{p}=\boldsymbol{0}.
$
This contradicts the invertibility of $\mathcal{M}_0$.
Therefore, \eqref{determine ker} holds, and the proof is complete.
\end{proof}

\section{Linear independence of Pad\'{e}-type approximants} \label{section: linear independence pade}
We keep the notation in Section~\ref{rodrigues formula}. 
In this section, we consider the following situation. 
Let $L\in K[z,\partial_z]$.  Put $d=\ord_{(1,-1)}(L)$. Assume $d\ge 1$ and $L$ satisfies property $(P)$ (see Definition~\ref{property P}).
By Lemma~\ref{property P implies} ${\rm(i)}$, we have $\dim_KV_1(L)=d$. 
Let us take $f_1,\ldots,f_{d}$ a $K$-basis of $V_1(L)$.
For $n\in \N$, we take $R_n\in I_n(L)\setminus\{0\}$.
For a nonnegative integer $\ell$, we define polynomials:
\[
P_{n,\ell}(z)={\rm{Eval}}_{t=z}\left(R^{*}_n\cdot t^{\ell}\right),  \ \ Q_{n,j,\ell}(z)=\varphi_{f_j}\left(\dfrac{P_{n,\ell}(z)-P_{n,\ell}(t)}{z-t}\right) \ \ (1\le j \le d).
\] 
We denote the Laurent series
\[
\mathfrak{R}_{n,j,\ell}(z)=P_{n,\ell}(z)f_j(z)-Q_{n,j,\ell}(z) \ \ (1\le j \le d).
\]
Note, by Proposition~\ref{Pade} {\rm(ii)}, when $P_{n,\ell}(z)\neq 0$, the vector of polynomials $(P_{n,\ell},Q_{n,1,\ell},\ldots,Q_{n,d,\ell})$ forms a weight $n$ Pad\'{e}-type approximant of $(f_1,\ldots,f_d)$ and $(\mathfrak{R}_{n,j,\ell}(z))_j$ is a Pad\'{e}-type approximation of $(f_1,\ldots,f_d)$.  Thus, in any case we have $\ord_{\infty}\,\mathfrak{R}_{n,j,\ell}\ge n+1$ and, by Proposition~\ref{Pade} {\rm(iii)}, we have the expansion:
\begin{align} \label{expand R}
\mathfrak{R}_{n,j,\ell}(z)=\sum_{k=n}^{\infty}\dfrac{\varphi_{f_j}(t^kP_{n,\ell}(t))}{z^{k+1}}.
\end{align}
Define the determinant $\Delta_n(z)$ of $(d+1)\times (d+1)$ matrix by
\[
\Delta_n(z):=\det
\begin{pmatrix}
  P_{n,0}(z) & P_{n,1}(z) & \cdots & P_{n,d}(z)\\
  Q_{n,1,0}(z) & Q_{n,1,1}(z) & \cdots & Q_{n,1,d}(z)\\
  \vdots & \vdots & \ddots & \vdots\\
  Q_{n,d,0}(z) & Q_{n,d,1}(z) & \cdots & Q_{n,d,d}(z)
\end{pmatrix}.
\]
To prove the nonvanishing of $\Delta_n(z)$ is an important task in transcendental number theory (cf. \cite{Siegel}). 
We study a condition so that $\Delta_n(z)\neq 0$ for all $n\in\N$.
The main statement of this section is as follows:
\begin{proposition} \label{equivalence nonzero det} 
We keep the notation as above. Assume that 
\begin{enumerate}
      \item[{\rm{\rm(i)}}] $\ord_{(1,-1)}(R_n)=dn$ for all $n\in \N$.
      \item[{\rm(ii)}] $R_n$ satisfies property $(P)$ for all $n\in \N$.
\end{enumerate}
Then the following are equivalent.
      \begin{enumerate}
      \item[{\rm(a)}] $1,f_1,\ldots,f_{d}$ are linearly independent over $K(z)$.
      \item[{\rm(b)}] $\Delta_n(z)\in K^{\times}$ for all $n\in \N$.
      \end{enumerate}
\end{proposition}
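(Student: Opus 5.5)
The plan is to show that $\Delta_n(z)$ is automatically a constant, identify that constant explicitly, and then translate its non-vanishing into the dimension condition of Lemma~\ref{lem:linear_indep_equiv}.

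\textbf{Step 1: degrees.} By hypotheses (i) and (ii) and Lemma~\ref{equiv of (P)} applied to $R_n$, we have $\deg R_n^*\cdot A=\deg A+dn$ for every $A\in K[t]$. Hence $\deg P_{n,\ell}=dn+\ell$, and $R_n^*$ is injective on $K[t]$.

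\textbf{Step 2: $\Delta_n$ is a constant.} In the determinant, replace each row entry $Q_{n,j,\ell}$ by $Q_{n,j,\ell}-f_jP_{n,\ell}=-\mathfrak{R}_{n,j,\ell}$. This is a row operation with coefficients $f_j$ in $K((1/z))$, so it does not change the determinant. Thus $\Delta_n$ equals $\pm\det$ of the matrix with first row $(P_{n,\ell})_\ell$ and remaining rows $(\mathfrak{R}_{n,j,\ell})_\ell$. Expand along the first row.
\begin{itemize}
\item Each $d\times d$ minor of the $\mathfrak{R}$'s has $\ord_\infty\ge d(n+1)$.
\item The cofactor of $P_{n,\ell}$ is such a minor, and $\deg P_{n,\ell}\le dn+d$.
\end{itemize}
So $\ord_\infty\Delta_n\ge 0$. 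Since $\Delta_n$ is a polynomial, it is a constant.

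\textbf{Step 3: the constant.} Only the term with $\ell=d$ can reach $z^0$. By \eqref{expand R} the leading coefficient of $\mathfrak{R}_{n,j,\ell}$ at $z^{-(n+1)}$ is $\varphi_{f_j}(t^nP_{n,\ell}(t))$. Therefore
\[
\Delta_n=\pm\,\mathrm{lc}(P_{n,d})\cdot \det\bigl(\varphi_{f_j}(t^nP_{n,\ell}(t))\bigr)_{1\le j\le d,\,0\le \ell\le d-1},
\]
and $\mathrm{lc}(P_{n,d})\neq0$ by Step 1. Consequently $\Delta_n=0$ if and only if there is a nonzero $A\in K[t]_{\le d-1}$ with $t^nR_n^*A\in\bigcap_j\ker\varphi_{f_j}$. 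Since also $t^kR_n^*A\in\bigcap_j\ker\varphi_{f_j}$ for $k<n$ (Lemma~\ref{equivalence R in I_n(L)}), this holds if and only if:
\begin{itemize}
\item[($\ast$)] there is a nonzero $P=R_n^*A$ with $\deg P<d(n+1)$ lying in
\[
\mathcal{K}_{n+1}:=\{P\in K[t]\mid \varphi_g(P)=0\ \text{for all}\ g\in V_{n+1}\},
\]
\end{itemize}
where we use $\varphi_{\pi(z^kf)}(P)=\varphi_f(t^kP)$.

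\textbf{Step 4: codimension count.} Since $\Phi$ is an isomorphism, $\mathcal{K}_{N}$ has codimension exactly $\dim_K V_N$ in $K[t]$. On the other hand, $R_N^*K[t]\subseteq\mathcal{K}_N$ by Lemma~\ref{equivalence R in I_n(L)}. By Step 1, $R_N^*K[t]$ has codimension exactly $dN$ and meets $K[t]_{<dN}$ only in $0$.
\begin{itemize}
\item \emph{(a)$\Rightarrow$(b).} By Lemma~\ref{lem:linear_indep_equiv}, $\dim V_{n+1}=d(n+1)$. Comparing codimensions gives $\mathcal{K}_{n+1}=R_{n+1}^*K[t]$, which contains no nonzero polynomial of degree $<d(n+1)$. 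So ($\ast$) fails and $\Delta_n\neq0$.
\item \emph{(b)$\Rightarrow$(a).} If (a) fails, Lemma~\ref{lem:linear_indep_equiv} gives a minimal $N$ with $\dim V_N<dN$. Here $N\ge2$, because $\dim V_1=d$ by Lemma~\ref{property P implies}. Put $n=N-1$. Minimality gives $\mathcal{K}_n=R_n^*K[t]$. Since $\mathcal{K}_{n+1}$ has codimension $<d(n+1)$, it meets $K[t]_{<d(n+1)}$ nontrivially. Such an element lies in $\mathcal{K}_n$, so it has the form $R_n^*A$ with $\deg A<d$. This is ($\ast$), so $\Delta_n=0$.
\end{itemize}

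\textbf{Main obstacle.} The delicate point is Step 3: checking carefully that only the $\ell=d$ column contributes to the constant term, and reducing the vanishing of the minor to ($\ast$). The codimension argument in Step 4 is then routine.
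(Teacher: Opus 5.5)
Your proof is correct. Steps 1--3 coincide with the paper's Lemma~\ref{const}: you subtract $f_j$ times the first row, expand by cofactors along the first row, and get $\Delta_n=c\,\Theta_n$ with $c$ the leading coefficient of $P_{n,d}$. Step 4, however, runs differently from the paper in both directions.

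\textbf{(a)$\Rightarrow$(b).} The paper obtains $\bigcap_{j}\bigcap_{k\le n}\ker\varphi_{\pi(z^kf_j)}=R_{n+1}^*K[t]$ in two stages. It first identifies $V_{n+1}(L)=V_1(R_{n+1})$, using Lemma~\ref{property P implies}~(i) for $R_{n+1}$, i.e.\ the recurrence description of $V_1(R_{n+1})$. It then applies Lemma~\ref{property P implies}~(ii) to $R_{n+1}$. You reach the same identity by a pure codimension count: the annihilator of $\Phi(V_N)$ has codimension $\dim V_N$, and $R_N^*K[t]$ is a complement of $K[t]_{<dN}$. This avoids applying Lemma~\ref{property P implies} to the operators $R_N$ at all.

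\textbf{(b)$\Rightarrow$(a).} The paper is constructive. From a relation $P_0+\sum_jP_jf_j=0$ of top degree $D$, the vector of top coefficients is a left kernel vector of the $\Theta_D$-matrix; this follows from Proposition~\ref{key prop} and $R_D\in I_D(L)$. You instead take the first index $N$ with $\dim V_N<dN$. You then produce a right kernel vector of the $\Theta_{N-1}$-matrix from two facts: $\mathcal{K}_N\cap K[t]_{<dN}\neq 0$, and $\mathcal{K}_{N-1}=R_{N-1}^*K[t]$.

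\textbf{Comparison.} The paper's route is shorter and reads the kernel vector directly off the relation. Yours is more uniform, since both directions become the same comparison of $\mathcal{K}_{n+1}$ with $R_{n+1}^*K[t]$. It also yields a sharper level-by-level statement: when $\dim V_n=dn$, one has $\Delta_n\neq 0$ if and only if $\dim V_{n+1}=d(n+1)$.
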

To prove Proposition~\ref{equivalence nonzero det}, we set 
\[
\Theta_n=\det
\begin{pmatrix}
\varphi_{f_1}(t^n R_n^*\cdot 1) & \cdots & \varphi_{f_1}(t^n R_n^*\cdot t^{d-1})\\
\vdots & \ddots & \vdots\\
\varphi_{f_{d}}(t^n R_n^*\cdot 1) & \cdots & \varphi_{f_{d}}(t^n R_n^*\cdot t^{d-1})
\end{pmatrix}\in K,
\]
and prepare the following lemma.
\begin{lemma} \label{const}
We keep the notation as above. Assume that 
\begin{enumerate}
      \item[{\rm{\rm(i)}}] $\ord_{(1,-1)}(R_n)=dn$ for all $n\in \N$.
      \item[{\rm(ii)}] $R_n$ satisfies property $(P)$ for all $n\in \N$.
\end{enumerate}
Then, there exists a nonzero constant $c\in K$ such that
$
\Delta_n(z)=c\cdot \Theta_n.
$
In particular, we have $\Delta_n(z)\in K$.
\end{lemma}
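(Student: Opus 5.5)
The plan is to trade the polynomial rows $Q_{n,j,\ell}$ for the remainder rows $\mathfrak{R}_{n,j,\ell}$ by elementary row operations, and then compare the degree of $\Delta_n(z)$ as a polynomial with its order at $z=\infty$. For each $1\le j\le d$, I will multiply the first row of the matrix defining $\Delta_n(z)$ by $f_j(z)$ and subtract the $(j+1)$-st row. This replaces the row $(Q_{n,j,\ell})_\ell$ by $(\mathfrak{R}_{n,j,\ell})_\ell$ and multiplies the determinant by $(-1)^d$. The computation takes place in $K((1/z))$, so it gives
\[
\Delta_n(z)=(-1)^d\det\begin{pmatrix} P_{n,\ell}(z)\\ \mathfrak{R}_{n,j,\ell}(z)\end{pmatrix}_{1\le j\le d,\ 0\le \ell\le d}.
\]
Here the first row is $(P_{n,\ell}(z))_{0\le\ell\le d}$, and row $j+1$ is $(\mathfrak{R}_{n,j,\ell}(z))_{0\le\ell\le d}$.

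Next I collect the degree and order information. By hypotheses (i) and (ii), Lemma~\ref{equiv of (P)} applied to $R_n$ gives $\deg P_{n,\ell}=\ell+dn$ exactly, with a nonzero leading coefficient $\lambda_{n,\ell}\in K$. By \eqref{expand R}, $\mathfrak{R}_{n,j,\ell}(z)=\varphi_{f_j}(t^nP_{n,\ell}(t))z^{-n-1}+O(z^{-n-2})$, so $\ord_\infty\mathfrak{R}_{n,j,\ell}\ge n+1$. I then expand the determinant along the first row. The $\ell$-th term is $\pm P_{n,\ell}(z)\,D_\ell(z)$, where $D_\ell$ is the $d\times d$ minor built from the remainders with column $\ell$ deleted. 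Every entry of $D_\ell$ has order at least $n+1$, so $\ord_\infty D_\ell\ge d(n+1)$. Consequently $\ord_\infty(P_{n,\ell}D_\ell)\ge d(n+1)-(\ell+dn)=d-\ell$. This is $\ge 1$ for $\ell<d$ and $\ge 0$ for $\ell=d$.

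It follows that $\ord_\infty\Delta_n\ge 0$. But $\Delta_n(z)\in K[z]$, because all entries of the original matrix are polynomials. Hence $\Delta_n$ is a constant, namely the coefficient of $z^0$ in its Laurent expansion. Only the term $\ell=d$ contributes to that coefficient. Its $z^0$ coefficient is the leading coefficient $\lambda_{n,d}$ of $P_{n,d}$ times the coefficient of $z^{-d(n+1)}$ in $D_d$. That coefficient is $\det(\varphi_{f_j}(t^nP_{n,\ell}(t)))_{1\le j\le d,\,0\le\ell\le d-1}$, and since $P_{n,\ell}(t)=R_n^*\cdot t^\ell$, this determinant is exactly $\Theta_n$. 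Collecting the signs $(-1)^d$ from the row operations and $(-1)^d$ from the cofactor position, I get $\Delta_n(z)=c\,\Theta_n$ with $c=\lambda_{n,d}$ up to sign, which is nonzero.

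The main point needing care is the exactness of $\deg P_{n,\ell}=\ell+dn$, together with the nonvanishing of its leading coefficient. This is precisely what property $(P)$ for $R_n$ and $\ord_{(1,-1)}(R_n)=dn$ supply through Lemma~\ref{equiv of (P)}. The remaining bookkeeping is extracting the leading Laurent coefficient of the minor $D_d$ and tracking the signs. Note that $c$ is allowed to depend on $n$; it is the leading coefficient $\sum_{m_j-j=dn}a_{m_j,j}(2d+dn+1)_{m_j}$ coming from $R_n^*\cdot t^d$.
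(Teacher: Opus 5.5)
Your proof is correct and follows the paper's argument step for step: the row operations turning the $Q$-rows into remainder rows, Laplace expansion along the first row, playing $\deg P_{n,\ell}=dn+\ell$ (Lemma~\ref{equiv of (P)} applied to $R_n$) against $\ord_\infty$ of the $d\times d$ remainder minors to force $\Delta_n(z)\in K$, and reading off the constant term as the leading coefficient of $P_{n,d}$ times $\Theta_n$ via \eqref{expand R}. Your bookkeeping in fact gives exactly $c=\lambda_{n,d}$, since the two factors $(-1)^d$ cancel; the only slip is in your closing side remark, where, because $P_{n,d}=R_n^*\cdot t^d$ and $\ord_{(1,-1)}(R_n)=dn$, the Pochhammer symbol should start at $dn+d+1$ rather than $2d+dn+1$, and this plays no role in the argument.
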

\begin{proof}
Note that the assumption ${\rm(i)}$ and ${\rm(ii)}$ imply, the polynomial $P_{n,\ell}$ satisfies 
\begin{align} \label{deg Pl}
\deg\,P_{n,\ell}=dn+\ell.
\end{align}
For the matrix in the definition of $\Delta_n(z)$, adding $-f_{j}(z)$ times first row to $j+1$ th row for each $1\le j \le d$, 
                     $$ 
                     \Delta_n(z)=(-1)^{d}{\rm{det}}
                     {\begin{pmatrix}
                     P_{n,0}(z) & \dots &P_{n,d}(z)\\
                     \mathfrak{R}_{n,1,0}(z) & \dots & \mathfrak{R}_{n,d,d}(z)\\
                     \vdots    & \ddots & \vdots  \\
                     \mathfrak{R}_{n,d,0}(z) & \dots & \mathfrak{R}_{n,d,d}(z)
                     \end{pmatrix}}. 
                     $$ 
We denote the $(s,t)$th cofactor of the matrix in the right hand side of above equality by $\Delta_{s,t}(z)$.
Then we have, developing along the first row 
\begin{align} \label{formal power series rep delta}
\Delta_n(z)=(-1)^{d}\left(\sum_{\ell=0}^{{{d}}}P_{n,\ell}(z)\Delta_{1,\ell+1}(z)\right).
\end{align}  
The property of the Pad\'{e} approximation ${\rm{ord}}_{\infty}\, {{\mathfrak{R}}}_{n,j,\ell}(z)\ge n+1$ for $1\le j \le d$, $0 \le \ell \le d$ implies
$$
{\rm{ord}}_{\infty}\,\Delta_{1,\ell+1}(z)\ge d(n+1) \ \ \text{for} \ \ 0\le \ell \le d.
$$
Combining equation~\eqref{deg Pl} and above inequality yields
$$P_{n,\ell}(z)\Delta_{1,\ell+1}(z)\in (1/z)\cdot K[[1/z]] \ \ \text{for} \ \ 0\le \ell \le d-1,$$
and 
$$P_{n,d}(z)\Delta_{1,{{d+1}}}(z)\in K[[1/z]].$$
Note that in above relation, the constant term of $P_{n,d}(z)\Delta_{1,{{d+1}}}(z)$ is 
\begin{equation} \label{what const} 
``\text{Coefficient of} \ z^{d(n+1)} \ \text{of} \ P_{n,d}(z)'' \cdot ``\text{Coefficient of} \ 1/z^{d(n+1)} \ \text{of} \ \Delta_{1,{{d+1}}}(z)''.
\end{equation}  
Equation $(\ref{formal power series rep delta})$ implies $\Delta_n(z)$ is a polynomial in $z$ with nonpositive valuation with respect to ${\rm{ord}}_{\infty}$. 
Thus, it has to be a constant.
Finally, by equation~\eqref{expand R}, the coefficient of $1/z^{d(n+1)}$ of  $\Delta_{1,{{d+1}}}(z)$ is $\Theta_n$. 
Combining Equations $(\ref{formal power series rep delta})$, $(\ref{what const})$ and above equality yields 
\[
\Delta_n(z)=
\dfrac{(-1)^d}{(dn)!}\partial^{dn}_z\cdot P_{n,d}(z)\times \Theta_n.
\]
By \eqref{deg Pl}, the constant $c=\tfrac{(-1)^d}{(dn)!}\partial^{dn}_z\cdot P_{n,d}(z)$ is nonzero . This completes the proof of Lemma~\ref{const}.
\end{proof}
\begin{proof}[\textbf{Proof of Proposition~\ref{equivalence nonzero det}}]

\medskip
\noindent
$(a)\Rightarrow (b)$.  
Assume $(a)$ holds.  
Let $n\ge 1$.  
By Lemma~\ref{const}, it suffices to show that $\Theta_n\ne 0$.
By definition we have
\begin{equation}\label{inc} 
 V_n(L)\subseteq V_1(R_n).
\end{equation}
By Lemma~\ref{lem:linear_indep_equiv}, assumption ${\rm(i)}$ yields
\[
  V_n(L)
  = \mathrm{Span}_K\{\pi(z^{k}f_j)\mid 0\le k\le n-1,\ 1\le j\le d\},
  \qquad 
  \dim_K V_n(L)=dn.
\]
Moreover, using Lemma~\ref{property P implies} ${\rm(i)}$, assumptions ${\rm(i)}$ and ${\rm(ii)}$ imply
$
  \dim_K V_1(R_n)=dn
$.
Together with \eqref{inc}, this gives $V_n(L)=V_1(R_n)$.  
Lemma~\ref{property P implies} ${\rm(ii)}$ then yields, for any positive integer $n$,
\begin{equation}\label{important}
  \bigcap_{j=1}^{d}\ \bigcap_{k=0}^{n-1} \ker\, \varphi_{\pi(z^{k}f_j)}
  \;=\; R_n^{*}\cdot K[t].
\end{equation}
Suppose now that $\Theta_n=0$.  
Then there exists a nonzero vector
$
  \boldsymbol{a}={}^t(a_0,\ldots,a_{d-1})\in K^d
$
such that
\[
\begin{pmatrix}
\varphi_{f_1}(t^{n}R_n^{*}\cdot 1) & \cdots & \varphi_{f_1}(t^{n}R_n^{*}\cdot t^{d-1})\\
\vdots & \ddots & \vdots\\
\varphi_{f_d}(t^{n}R_n^{*}\cdot 1) & \cdots & \varphi_{f_d}(t^{n}R_n^{*}\cdot t^{d-1})
\end{pmatrix}
\!\cdot\! \boldsymbol{a}
= \boldsymbol{0}.
\]
Set $Q(t)=\sum_{j=0}^{d-1} a_j t^j$.  
Since $\varphi_{f_j}\circ t^n=\varphi_{\pi(z^{n}f_j)}$, linearity gives
\[
  R_n^{*}\cdot Q(t)\in 
  \bigcap_{j=1}^{d} \ker\, \varphi_{\pi(z^{n}f_j)}.
\]
Using \eqref{important} for both $n$ and $n+1$, we obtain
\[
  R_n^{*}\cdot Q(t)\in
  \left(\bigcap_{j=1}^{d}\ \bigcap_{k=0}^{n-1}\ker\,\varphi_{\pi(z^{k}f_j)}\right)
  \cap
  \left(\bigcap_{j=1}^{d}\ker\,\varphi_{\pi(z^{n}f_j)}\right)
  = R_{n+1}^{*}\cdot K[t].
\]
Hence there exists $P(t)\in K[t]$ such that
$
  R_n^{*}\cdot Q(t)=R_{n+1}^{*}\cdot P(t)
$.
By assumptions ${\rm(i)}$, ${\rm(ii)}$ and Lemma~\ref{equiv of (P)},
\[
  \deg\, R_n^{*}\cdot Q
  = dn + \deg\,Q
  \le d(n+1)-1,
\]
whereas
\[
  \deg\,R_{n+1}^{*}\cdot P
  = d(n+1)+\deg\,P
  \ge d(n+1),
\]
a contradiction.  
Thus $\Theta_n\ne 0$, proving $(b)$.

\bigskip
\noindent
$(b)\Rightarrow (a)$.  
Assume $(b)$ holds and suppose that $1,f_1,\ldots,f_d$ are linearly dependent over $K(z)$.  
Then there exist a positive integer $n$ and polynomials
\[
  (P_0,P_1,\ldots,P_d)\in K[z]^{d+1}\setminus\{\boldsymbol{0}\},
  \qquad
  \max_{1\le j\le d}\{\deg P_j\}=n+1,
\]
such that
\begin{equation*} 
  P_0(z)+\sum_{j=1}^{d} P_j(z)f_j(z)=0.
\end{equation*}
Write $P_j(z)=\sum_{k=0}^{n+1} p_{j,k}z^{k}$.  
From the above identity,
\begin{equation}\label{in Vn(L)}
  \pi\left(\sum_{j=1}^{d} p_{j,n+1} z^{n+1}f_j\right)
  =
  \sum_{j=1}^{d}\sum_{k=0}^{n} p_{j,k}\,\pi(z^{k}f_j)
  \in V_{n+1}(L).
\end{equation}
Put $\boldsymbol{p}=(p_{1,n+1},\ldots,p_{d,n+1})\in K^{d}\setminus\{\boldsymbol{0}\}$.
Then
\[
\boldsymbol{p}\cdot
\begin{pmatrix}
\varphi_{f_1}(t^{n+1}R_{n+1}^{*}\cdot 1) & \cdots & \varphi_{f_1}(t^{n+1}R_{n+1}^{*}\cdot t^{d-1})\\
\vdots & \ddots & \vdots\\
\varphi_{f_d}(t^{n+1}R_{n+1}^{*}\cdot 1) & \cdots & \varphi_{f_d}(t^{n+1}R_{n+1}^{*}\cdot t^{d-1})
\end{pmatrix}
=
\left(
\sum_{j=1}^{d} p_{j,n+1}\,\varphi_{f_j}(t^{n+1}R_{n+1}^{*}\cdot t^{\ell})
\right)_{0\le \ell\le d-1}.
\]
By the $K$-isomorphism $\Phi$ from \eqref{Phi} and Proposition~\ref{key prop},
\[
  \sum_{j=1}^{d} p_{j,n+1}\,\varphi_{f_j}(t^{n+1}R_{n+1}^{*}\cdot t^{\ell})
  = \varphi_{\pi\left(R_{n+1}\cdot \sum_{j=1}^{d} p_{j,n+1} z^{n+1}f_j\right)}(t^{\ell})
  = 0.
\]
The last equality follows from \eqref{in Vn(L)} together with
\begin{align*}
  \pi\left(R_{n+1}\cdot \sum_{j=1}^{d} p_{j,n+1} z^{n+1}f_j\right)
  &= \pi\left(
      R_{n+1}\cdot
      \pi\left(\sum_{j=1}^{d} p_{j,n+1} z^{n+1}f_j\right)
    \right)\\
  &= \pi\left(
      R_{n+1}\cdot
      \sum_{j=1}^{d}\sum_{k=0}^{n} p_{j,k}\,\pi(z^{k}f_j)\right)=0,
\end{align*}
since the morphism $\pi\circ R_{n+1}$ annihilates the element of $V_{n+1}(L)$.
Thus the matrix defining $\Theta_{n+1}$ has a nontrivial kernel, contradicting $\Theta_{n+1}\ne 0$ by assumption $(b)$.  
Hence $1,f_1,\ldots,f_d$ must be linearly independent over $K(z)$.
\end{proof}

\section{Pad\'{e}-type approximants for multiple polylogarithms} \label{section: Pade multiple polylog}
Keeping the notation from Section \ref{rodrigues formula}, we fix positive integers $m,r$ and a field $K$ of characteristic $0$.
Let us fix $\alpha_1,\ldots,\alpha_m\in K^{\times}$ which are pairwise distinct. 
Denote by $\mathcal{S}_r$ the set of indices
\[
\mathcal{S}_r=\{(\boldsymbol{s},\boldsymbol{a})\in \cup_{k=1}^r \left(\N^k\times \{\alpha_1,\ldots,\alpha_m\}^k\right) \, ; \, |\boldsymbol{s}|\le r\}
\] 
and set 
\[
M_r=\#\mathcal{S}_r=(m+1)^r-1,
\]
where no confusion may arise, we write $\mathcal{S}_r = \mathcal{S}$ and $M_r = M$ for simplicity.

\medskip

For $(\boldsymbol{s},\boldsymbol{a})\in \mathcal{S}$ with $\boldsymbol{a}=(\alpha_{i_1},\ldots,\alpha_{i_k})$, we denote the following multiple polylogarithm
\[
f_{\boldsymbol{s},\boldsymbol{a}}(z):=\L_{\boldsymbol{s}}(\alpha_{i_1}/\alpha_{i_2},\ldots,\alpha_{i_k}/z).
\]
For a positive integer $N$, we define the differential operator 
\[
L_N = \frac{1}{N!} z^N \prod_{i=1}^m (z-\alpha_i)^N \partial^N_z \in K[z, \partial_z].
\]
One easily verifies that $\ord_{(1,-1)}(L_N) = mN$ and that $L_N$ satisfies property $(P)$ (see Definition~\ref{property P}). 
Let $r$ be a positive integer. We define the differential operator $L$ by
\[
L = L_{(m+1)^{r-1}} L_{(m+1)^{r-2}} \dots L_{m+1} L_1.
\]
The aim of section is to construct the Pad\'{e}-type approximants of the multiple polylogarithms $(f_{\boldsymbol{s},\boldsymbol{a}}(z))_{(\boldsymbol{s},\boldsymbol{a})\in \mathcal{S}}$.

\medskip

The following is the crucial functional properties of the multiple polylogarithms $(f_{\boldsymbol{s},\boldsymbol{a}}(z))_{(\boldsymbol{s},\boldsymbol{a})\in \mathcal{S}}$.
\begin{proposition} \label{prop:linear_independence_Kz}
Let $n$ be a positive integer.  Then the following properties hold.
\begin{enumerate}
      \item[{\rm{\rm(i)}}] $V_n(L)={\rm{Span}}_K\{\pi(z^kf_{\boldsymbol{s},\boldsymbol{a}})\mid 0\le k \le n-1, (\boldsymbol{s},\boldsymbol{a})\in \mathcal{S}_r\} $.
      \item[{\rm{(ii)}}] The functions $(f_{\boldsymbol{s},\boldsymbol{a}}(z))_{(\boldsymbol{s},\boldsymbol{a}) \in \mathcal{S}}$ together with $1$ are linearly independent over $K(z)$.
      \item[{\rm(iii)}] The differential operator
      \[
      R_n = L_{(m+1)^{r-1}n} L_{(m+1)^{r-2}n} \dots L_{(m+1)n} L_n \in K[z, \partial_z]
      \]
      belongs to the left ideal $I_n(L)$ $($see Definition~$\ref{Rodrigues ideal})$.
\end{enumerate}
\end{proposition}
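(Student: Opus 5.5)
We argue by induction on the depth structure, organizing the functions by weight. Write $D=z\prod_{i=1}^m(z-\alpha_i)$, so that $L_N=\frac{1}{N!}D^N\partial_z^N$. The basic identity is
\[
\partial_z \L_{\boldsymbol{s}}(\alpha_{i_1}/\alpha_{i_2},\ldots,\alpha_{i_k}/z)=
\begin{cases}
-\frac{1}{z}\,f_{(s_1,\ldots,s_k-1),\boldsymbol{a}}(z) & s_k\ge2,\\[2pt]
\left(\frac{1}{z-\alpha_{i_k}}-\frac1z\right) f_{(s_1,\ldots,s_{k-1}),\boldsymbol{a}'}(z) & s_k=1,
\end{cases}
\]
where $\boldsymbol{a}'=(\alpha_{i_1},\ldots,\alpha_{i_{k-1}})$, with the convention $f_{\emptyset}=1$. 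This gives, for each $(\boldsymbol{s},\boldsymbol{a})$ of weight $w$, $D\,\partial_z f_{\boldsymbol{s},\boldsymbol{a}}=(\text{polynomial of degree}\le m)\cdot f_{\text{weight } w-1}$. We check it directly from the series: differentiating the $z^{-n_k}$ factor lowers $s_k$, and when $s_k=1$ the geometric sum over $n_k>n_{k-1}$ produces the rational factor.

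\emph{Step 1 (annihilation).} We prove by induction on $w$ that $L_{(m+1)^{w-1}}\cdots L_1\cdot f\in K[z]$ for every $f$ of weight $\le w$, and more strongly for all $z^kf$, $k\le n-1$, with $L_{N}$ replaced by $L_{Nn}$. The mechanism is as follows. Put $F_w=\mathrm{Span}_{K[z]_{\le c}}$ of the weight-$\le w$ functions. Then a function $g=\sum p_j(z) f_j$ with $f_j$ of weight exactly $w$ and $\deg p_j<N$ satisfies that $\partial_z^N g$ involves $\partial_z^N f_j$ plus lower-weight terms, since the polynomial parts die. Moreover $D^N\partial_z^N f_j$ lies in the polynomial span of weight $\le w-1$ functions with degree growth controlled by $N(m+1)-N$. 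The bookkeeping shows the polynomial-coefficient degrees multiply by $m+1$ at each weight step, which is exactly why $N$ jumps by a factor $(m+1)$ from one factor of $L$ to the next. For (iii) we apply this to $z^kf$ with $k\le n-1$; the extra degree $n-1$ is absorbed because every $N$ is scaled by $n$.

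\emph{Step 2 (dimension count and (i)).} By additivity \eqref{sum of ord}, $\ord_{(1,-1)}(L)=m(1+(m+1)+\cdots+(m+1)^{r-1})=(m+1)^r-1=M$, and $L$ has property $(P)$ as a product of operators with property $(P)$, via Lemma~\ref{equiv of (P)} (ii) applied to the adjoint, where degrees add. Hence $\dim_K V_1(L)=M$ by Lemma~\ref{property P implies}. Step 1 with $n=1$ puts all $M$ functions $f_{\boldsymbol{s},\boldsymbol{a}}$ into $V_1(L)$. Once they are known to be $K$-linearly independent, a consequence of (ii), they form a basis, and (i) follows from the definition of $V_n$.

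\emph{Step 3 ((ii)), the main obstacle.} We induct on $r$. Suppose $P_0+\sum P_{\boldsymbol{s},\boldsymbol{a}}f_{\boldsymbol{s},\boldsymbol{a}}=0$ is nontrivial. First reduce to a relation involving a top-weight function. Then use monodromy: analytic continuation of $f_{\boldsymbol{s},\boldsymbol{a}}$ around $z=\alpha_{i}$ (or equivalently, the singularity structure read off from the differential equation) adds $2\pi i$ times a lower-depth function only when the last entry involves $\alpha_i$, and around $z=0$ it acts through $\log$-type terms. Taking differences of continuations kills $P_0$ and lowers the weight, giving a nontrivial relation among weight $\le r-1$ functions and contradicting the induction hypothesis. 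To stay purely algebraic, one can instead apply the derivation trick: divide by the leading coefficient of a top-weight term, differentiate, and use the identity above to obtain a shorter relation, which is the standard Kolchin/Ostrowski-type argument. The delicate point is keeping track of distinct $\alpha_i$, which requires showing that the rational functions $1/(z-\alpha_i)-1/z$ cannot be cancelled by derivatives of elements of $K(z)$ combined with lower-weight terms. This residue argument at $z=\alpha_i$ uses pairwise distinctness, and it is where we expect most of the work to lie.
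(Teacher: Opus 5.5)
Your route differs from the paper's for (ii). Step~3, which carries all of (ii), is a plan rather than a proof, but the plan is sound and closes in a few lines (second paragraph).

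\textbf{Comparison.} Step~1 is the paper's argument for (iii): Lemma~\ref{differential multiple polylogarithm} iterated, with the coefficient bound $K[z]_{\le (m+1)n-1}$ doing the bookkeeping. Deriving (i) from (ii) is a clean substitute for the paper's comparison of $V_{n,r}\subseteq V_n(L)$ with $\dim_K V_{n,r}=nM_r$. You do this through $\dim_K V_1(L)=M$ (property $(P)$ for $L$, by Lemma~\ref{equiv of (P)} and additivity of $\ord_{(1,-1)}$) and a basis argument. For (ii), the paper never differentiates a relation. It proves $\dim_K V_{n,r}=nM_r$ by induction on $r$, using:
the surjectivity of $\pi\circ L_n\colon V_{n,r}\to V_{(m+1)n,r-1}$ (Lemma~\ref{surjective}, a partial-fraction computation);
the inclusion $V_{n,1}\subseteq\ker(\pi\circ L_n)$;
and a base case $r=1$ quoted from the literature.
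It then converts the count into (ii) via Lemma~\ref{lem:linear_indep_equiv}. Your Kolchin--Ostrowski route is independent of the Rodrigues machinery and self-contained, since its base case is the same computation. It also works for any unipotent system of iterated integrals whose letters are independent modulo exact forms. The paper's route stays inside its framework and gives the exact dimensions of $V_{n,r}$ and the surjectivity of $\pi\circ L_n$ as by-products.

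\textbf{Completing Step~3.} Induct on the weight $w$, assuming $1$ and the $f_{\boldsymbol{s},\boldsymbol{a}}$ of weight $\le w-1$ are $K(z)$-independent. Among relations with some nonzero weight-$w$ coefficient, take one with the fewest such. Normalize one of them to $1$ and apply $\partial_z$, a derivation of $K((1/z))$ with constant field $K$. Minimality forces every weight-$w$ coefficient to be a constant. Each weight-$w$ index has a unique parent $u=(\boldsymbol{s},\boldsymbol{a})$ of weight $w-1$ (with $f_\emptyset=1$). It is either the child $(s_1,\ldots,s_k+1)$, with form $-1/z$ and constant coefficient $c_{u,0}$, or the child $((\boldsymbol{s},1),(\boldsymbol{a},\alpha_i))$, with form $\frac1z-\frac1{z-\alpha_i}$ and coefficient $c_{u,i}$. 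For $u=\emptyset$ only the second kind occurs. Your sign for the second form is reversed: $\partial_z\mathrm{Li}_1(\alpha/z)=-\alpha/(z(z-\alpha))$. By the induction hypothesis, the coefficient of each $f_u$ in the differentiated relation vanishes. This gives $-c_{u,0}\frac1z+\sum_i c_{u,i}\bigl(\frac1z-\frac1{z-\alpha_i}\bigr)=-Q_u'$, where $Q_u\in K(z)$ is the original coefficient of $f_u$. Since $Q_u'$ has zero residue at every point, the residue at $\alpha_i$ gives $c_{u,i}=0$, and then the residue at $0$ gives $c_{u,0}=0$. This contradicts the normalization.

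\textbf{Two corrections.} First, the residue at $0$, and hence $\alpha_i\neq 0$ and not only distinctness, is needed for the children obtained by raising $s_k$. Your description of the delicate point omits this. Second, drop the monodromy variant, because its key claim is false. For example, $\mathrm{Li}_{1,1}(\alpha_1/\alpha_2,\alpha_2/z)$ changes by $\pm 2\pi i\,\mathrm{Li}_1(\alpha_2/z)$ plus a constant around $z=\alpha_1$, although its last entry does not involve $\alpha_1$.
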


In order to prove Proposition~\ref{prop:linear_independence_Kz}, we first establish several auxiliary results.
\begin{lemma} \label{differential multiple polylogarithm}
Let $n \in \N$ and let $k$ be an integer with $0 \le k \le n-1$. 
For any $(\bm{s}, \bm{a}) \in \mathcal{S}_r$, we have
\[
L_n \cdot z^k f_{\bm{s}, \bm{a}} \in K[z] +\sum_{(\bm{s}', \bm{a}') \in \mathcal{S}_{r-1}} K[z]_{\le (m+1)n-1} f_{\bm{s}', \bm{a}'}(z).
\]
\end{lemma}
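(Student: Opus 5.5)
The plan is to compute the action of $\partial_z$ on the functions $f_{\boldsymbol{s},\boldsymbol{a}}$ explicitly, show that each derivative drops the weight by one up to a simple rational factor, and then control both denominators and behaviour at $z=\infty$ of the rational coefficients produced by $\partial_z^n(z^kf_{\boldsymbol{s},\boldsymbol{a}})$.

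\emph{Step 1 (first derivative).} Write $\boldsymbol{s}=(s_1,\ldots,s_k)$ and $\boldsymbol{a}=(\alpha_{i_1},\ldots,\alpha_{i_k})$. Put $\theta=z\partial_z$, and let $\boldsymbol{s}^-=(s_1,\ldots,s_{k-1})$, $\boldsymbol{a}^-=(\alpha_{i_1},\ldots,\alpha_{i_{k-1}})$. Since only the last variable $\alpha_{i_k}/z$ depends on $z$, $\theta$ multiplies the summand indexed by $n_k$ by $-n_k$. Hence:
\begin{enumerate}
\item[(a)] if $s_k\ge2$, then $-\theta f_{\boldsymbol{s},\boldsymbol{a}}=f_{(s_1,\ldots,s_k-1),\boldsymbol{a}}$;
\item[(b)] if $s_k=1$ and $k\ge2$, summing the geometric series over $n_k>n_{k-1}$ and using $(\alpha_{i_{k-1}}/\alpha_{i_k})^{n}(\alpha_{i_k}/z)^{n}=(\alpha_{i_{k-1}}/z)^{n}$ gives
\[
-\theta f_{\boldsymbol{s},\boldsymbol{a}}=\frac{\alpha_{i_k}}{z-\alpha_{i_k}}\,f_{\boldsymbol{s}^-,\boldsymbol{a}^-};
\]
\item[(c)] if $k=1$ and $s_1=1$, then $-\theta f=\alpha_{i_1}/(z-\alpha_{i_1})$.
\end{enumerate}
In every case the new index has weight $|\boldsymbol{s}|-1\le r-1$, so it lies in $\mathcal{S}_{r-1}$, or the term is a rational function. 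Therefore
\[
\partial_z f_{\boldsymbol{s},\boldsymbol{a}}=g\cdot f'+h,
\]
where $f'$ is either $0$ or some $f_{\boldsymbol{s}',\boldsymbol{a}'}$ with $(\boldsymbol{s}',\boldsymbol{a}')\in\mathcal{S}_{r-1}$. Both $g$ and $h$ lie in $D^{-1}K[z]$ with $D=z\prod_i(z-\alpha_i)$, and both have $\mathrm{ord}_\infty\ge1$.

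\emph{Step 2 (higher derivatives, by induction on $j\ge1$).} I claim
\[
\partial_z^j f_{\boldsymbol{s},\boldsymbol{a}}\in D^{-j}K[z]+\sum_{(\boldsymbol{s}',\boldsymbol{a}')\in\mathcal{S}_{r-1}}\bigl(D^{-j}K[z]\cap\{\mathrm{ord}_\infty\ge j\}\bigr)\,f_{\boldsymbol{s}',\boldsymbol{a}'}.
\]
The inductive step differentiates a term $(A/D^j)F$, where $F$ is $1$ or some $f_{\boldsymbol{s}',\boldsymbol{a}'}$:
\begin{itemize}
\item $\partial_z(A/D^j)$ lies in $D^{-j-1}K[z]$, and differentiation raises $\mathrm{ord}_\infty$ by at least one;
\item $(A/D^j)\,\partial_zF$ is handled by Step 1 applied to $F$; it contributes a factor in $D^{-1}K[z]$ with $\mathrm{ord}_\infty\ge1$, and the new index remains in $\mathcal{S}_{r-1}$ because derivatives only lower the weight.
\end{itemize}
The bookkeeping that needs the most care is this: each operation must add at most one power of $D$ to the denominator, and at least one to $\mathrm{ord}_\infty$.

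\emph{Step 3 (conclusion).} By Leibniz,
\[
\partial_z^n(z^kf)=\sum_{j}\binom{n}{j}\partial_z^{n-j}(z^k)\,\partial_z^jf .
\]
Because $k\le n-1$, every nonzero term has $j\ge n-k\ge1$, so Step 2 applies to each. The coefficient of $f_{\boldsymbol{s}',\boldsymbol{a}'}$ in the $j$-th term lies in $D^{-n}K[z]$, since $j\le n$. Its order at infinity is at least $j-(k-(n-j))=n-k\ge1$. Multiplying by $\frac1{n!}D^n=\frac{1}{n!}z^n\prod_i(z-\alpha_i)^n$, which has degree $(m+1)n$, clears all denominators. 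Each coefficient of $f_{\boldsymbol{s}',\boldsymbol{a}'}$ therefore becomes a polynomial of degree at most $(m+1)n-1$. The purely rational part similarly becomes a polynomial in $K[z]$. This gives the asserted membership for $L_n\cdot z^kf_{\boldsymbol{s},\boldsymbol{a}}$.
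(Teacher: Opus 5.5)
Your proof is correct and follows the paper's route: compute $\partial_z f_{\boldsymbol{s},\boldsymbol{a}}$ explicitly (your formulas agree with the paper's $-\frac{1}{z}f_{\boldsymbol{s}',\boldsymbol{a}}$ and $-\frac{\alpha_{i_k}}{z(z-\alpha_{i_k})}f_{\boldsymbol{s}',\boldsymbol{a}'}$), then apply the Leibniz rule. The paper compresses the last step into one line, and your induction tracking both the power of $D=z\prod_i(z-\alpha_i)$ in the denominators and $\mathrm{ord}_\infty$ of the coefficients is exactly the bookkeeping needed to justify the degree bound $(m+1)n-1$.
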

\begin{proof}
Let $(\bm{s}, \bm{a}) = (s_1, \dots, s_k, \alpha_{i_1}, \dots, \alpha_{i_k})$. 
A straightforward computation yields
\[
\partial_z \cdot f_{\bm{s}, \bm{a}} =
\begin{cases}
-\dfrac{1}{z} f_{\bm{s}', \bm{a}} & \text{if } s_k > 1, \\
-\dfrac{\alpha_{i_k}}{z(z-\alpha_{i_k})} f_{\bm{s}', \bm{a}'} & \text{if } s_k = 1,
\end{cases}
\]
where $\bm{s}' = (s_1, \dots, s_k-1)$ if $s_k > 1$, and $\bm{s}' = (s_1, \dots, s_{k-1})$, $\bm{a}' = (\alpha_{i_1}, \dots, \alpha_{i_{k-1}})$ if $s_k = 1$. The desired relation then follows from the Leibniz rule.
\end{proof}

For $n, r \in \mathbb{N}$, we define the $K$-vector space
\[
V_{n,r} = \mathrm{Span}_K \{ \pi(z^k f_{\bm{s}, \bm{a}}) \mid 0 \le k \le n-1, ({\bm{s}, \bm{a}}) \in \mathcal{S}_r \}.
\]
Note that $L_n$ belongs to the left ideal $I_n(L_1)$ for any $n \in \mathbb{N}$ (cf. Example~\ref{order 1} with $a_1(z) = z \prod_{i=1}^m (z - \alpha_i)$ and $a_0(z) = 0$). By applying Lemma~\ref{differential multiple polylogarithm} repeatedly, we obtain
\begin{align*}
L \cdot f_{\bm{s}, \bm{a}} &= L_{(m+1)^{r-1}} (L_{(m+1)^{r-2}} \dots L_{m+1} L_1 \cdot f_{\bm{s}, \bm{a}}) \\
&\in L_{(m+1)^{r-1}} \left( K[z] + \sum_{i=1}^m K[z]_{\le (m+1)^{r-1}-1} f_{1, \alpha_i} \right) \subset K[z].
\end{align*}
This implies $V_{1,r} \subseteq V_1(L)$ and, more generally,
\begin{equation} \label{Vnr subset Vn}
V_{n,r} \subseteq V_n(L) \quad \text{for any } n \in \mathbb{N}.
\end{equation}

Lemma~\ref{differential multiple polylogarithm} ensures that the following $K$-homomorphism is well-defined:
\[
\pi \circ L_n : V_{n,r} \longrightarrow V_{(m+1)n,r-1}.
\]

\begin{lemma} \label{surjective}
For any $n,r\in \N$ with $r\ge2$, the morphism $\pi\circ L_n:V_{n,r}\rightarrow V_{(m+1)n,r-1}$ is surjective.
\end{lemma}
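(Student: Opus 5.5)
The plan is to argue by a triangular (filtration) argument on the weight, reducing everything to a dimension count for a single explicit linear map built from logarithms. Write $\mathcal{S}_{r-1}=\mathcal{S}_{r-1}^{(w)}$ filtered by weight $w=|\bm{s}'|$. For $(\bm{s}',\bm{a}')\in\mathcal{S}_{r-1}$ with $\bm{s}'=(s'_1,\dots,s'_k)$, $\bm a'=(\alpha_{i_1},\dots,\alpha_{i_k})$, there are exactly $m+1$ ``parents'' in $\mathcal{S}_r$ of weight $|\bm{s}'|+1$:
\begin{itemize}
\item $\sigma_0=((s'_1,\dots,s'_k+1),\bm a')$;
\item $\sigma_i=((\bm{s}',1),(\bm a',\alpha_i))$ for $1\le i\le m$.
\end{itemize}
By the derivative formula in the proof of Lemma~\ref{differential multiple polylogarithm}, $\partial_z f_{\sigma_i}=g_i(z)\,f_{\bm s',\bm a'}$ with $g_0=-1/z$ and $g_i=-\alpha_i/(z(z-\alpha_i))$. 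I would show that the $(m+1)n$ elements $\pi(z^k f_{\sigma_i})$, $0\le k\le n-1$, $0\le i\le m$, are mapped by $\pi\circ L_n$ onto
\[
\Span_K\{\pi(z^\ell f_{\bm s',\bm a'})\mid 0\le \ell\le (m+1)n-1\}
\]
modulo the subspace $V^{<w}$ spanned by $\pi(z^\ell f_{\bm s'',\bm a''})$ with $|\bm s''|<|\bm s'|$ and $\ell\le (m+1)n-1$. Surjectivity then follows by descending induction on the weight: the terms in $V^{<w}$ are handled by the lower-weight step, with the base case being weight $0$, i.e.\ polynomials, which are killed by $\pi$.

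For the leading-term computation, expand $L_n\cdot z^k f_{\sigma_i}$ by Leibniz. Every term in which $f_{\bm s',\bm a'}$ is differentiated at least once is of strictly lower weight, and by Lemma~\ref{differential multiple polylogarithm} it has polynomial coefficient of degree $\le (m+1)n-1$, so it lies in $V^{<w}$ after $\pi$. Hence, modulo $V^{<w}$ and $K[z]$,
\[
L_n\cdot z^kf_{\sigma_i}\equiv c_{k,i}(z)\,f_{\bm s',\bm a'}.
\]
Here $c_{k,i}$ is obtained by formally treating $f_{\bm s',\bm a'}$ as a constant. Concretely, $c_{k,i}=L_n\cdot\bigl(z^k G_i\bigr)$, where $G_i$ is a primitive of $g_i$:
\[
G_0=-\log z,\qquad G_i=\log(z-\alpha_i)-\log z .
\]
These primitives live in a differential extension of $K(z)$. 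I would check via the degree bound in Lemma~\ref{differential multiple polylogarithm} that each $c_{k,i}$ is a polynomial of degree $\le (m+1)n-1$.

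The remaining point, which I expect to be the main step, is that the $(m+1)n$ polynomials $c_{k,i}$ span $K[z]_{\le (m+1)n-1}$. Since the dimensions agree, it suffices to prove injectivity of the map
\[
W:=\Span_K\{z^k\log z,\ z^k\log(z-\alpha_i)\mid 0\le k\le n-1,\ 1\le i\le m\}\ \longrightarrow\ K[z],\qquad y\mapsto L_n\cdot y .
\]
If $L_n\cdot y=0$ then $\partial_z^n y=0$, because $L_n=\tfrac1{n!}z^n\prod_i(z-\alpha_i)^n\partial_z^n$ and the leading coefficient is a nonzero polynomial. Write $y=P_0\log z+\sum_i P_i\log(z-\alpha_i)$ with $\deg P_i\le n-1$. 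Then $\partial_z^n y$ equals
\[
\sum_i \bigl(\partial_z^nP_i\bigr)\log(\cdot)+(\text{rational})=(\text{rational function with poles of order exactly } n \text{ at } 0,\alpha_i \text{ whenever } P_i(\text{pole})\neq0\ldots),
\]
since $\partial_z^nP_i=0$. A cleaner route is to use the algebraic independence of $\log z,\log(z-\alpha_i)$ over $K(z)$: one writes $\partial^n_z(P\log(z-\alpha))$ via partial fractions as (polynomial) $+\;c\,(z-\alpha)^{-n}\cdot(\ldots)$, where $c$ is essentially $P$ evaluated/expanded at $\alpha$. From this one reads off that the principal parts at the distinct points $0,\alpha_1,\dots,\alpha_m$ force every $P_i=0$.

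Concretely, I would compute
\[
\partial_z^n\bigl((z-\alpha)^j\log(z-\alpha)\bigr)=\pm\, j!\,(n-j-1)!\,(z-\alpha)^{j-n}\qquad (0\le j\le n-1),
\]
whose pole orders $n-j$ at $\alpha$ are distinct for distinct $j$. Expanding each $P_i$ in powers of $(z-\alpha_i)$ (with $\alpha_0:=0$), the principal parts at the distinct points $\alpha_0,\dots,\alpha_m$ are independent, so $\partial^n_zy=0$ forces all coefficients to vanish. This gives injectivity and hence the surjectivity of $\pi\circ L_n$ in the leading weight. Combined with the descending induction on weight from the first paragraph, this yields the lemma.
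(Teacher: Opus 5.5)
Your argument is correct, and its skeleton is the paper's. The paper inducts on $r$ through the induced maps $V_{n,r+1}/V_{n,r}\to V_{(m+1)n,r}/V_{(m+1)n,r-1}$, which is exactly your weight filtration. Like you, it reduces the lemma to the claim $(*)$ that the $(m+1)n$ coefficient polynomials of $f_{\bm{s}',\bm{a}'}$ span $K[z]_{\le (m+1)n-1}$.

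Where you genuinely differ is in how you verify $(*)$. The paper uses the partial-fraction identity $\{P/a^n : P\in K[z]_{\le (m+1)n-1}\}=\Span_K\{g_i^{(q)} : 0\le q\le n-1,\ 0\le i\le m\}$, with $a=z\prod_i(z-\alpha_i)$, plus triangularity in $\ell$. However, its displayed Leibniz expansion, and hence its $(*)$, omits the factor $z^{\ell-j}$ produced by $\partial_z^j z^\ell$. With that factor restored, the coefficient is $\frac{1}{n!}a^n\partial_z^n(z^\ell G_i)$, which is exactly your $c_{\ell,i}$. For $i\ge 1$ and $\ell\ge 1$ this is not a constant combination of $g_i^{(0)},\dots,g_i^{(n-1)}$: its coefficient of $(z-\alpha_i)^{-n}$ is nonzero while that of $z^{-n}$ vanishes. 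So the paper's one-line deduction does not literally apply. Your route does handle the true coefficients: you expand $z^\ell$ in powers of $z-\alpha_i$, use $\partial_z^n\bigl((z-\alpha)^j\log(z-\alpha)\bigr)=(-1)^{n-j-1}j!\,(n-j-1)!\,(z-\alpha)^{j-n}$, and use independence of principal parts at the distinct points $0,\alpha_1,\dots,\alpha_m$. The same computation also gives $\dim W=(m+1)n$, which your dimension count needs. The paper's version is shorter; yours is the one that actually closes the step.

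A few small repairs:
\begin{itemize}
\item Since $g_i=\frac1z-\frac1{z-\alpha_i}$, the primitive is $G_i=\log z-\log(z-\alpha_i)$. The sign slip is harmless, because only spans matter.
\item Your induction on the weight runs upward from weight $0$, not downward.
\item Check the degree bound for the Leibniz terms in which $f_{\bm{s}',\bm{a}'}$ is differentiated directly from the expansion, not from the statement of Lemma~\ref{differential multiple polylogarithm}. That statement cannot be split by weight without the $K(z)$-linear independence, which is later deduced from the present lemma. Concretely, each such term has the form $a^n z^{k-j}h\,f_{\bm t}$, with $f_{\bm t}$ of lower weight (or $f_{\bm t}=1$). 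Here $h$ is rational with poles only at $0,\alpha_1,\dots,\alpha_m$, each of order at most $n-j$, and $\deg h\le -(n-j)$. Hence $a^nz^{k-j}h$ is a polynomial of degree at most $(m+1)n-1$.
\item No differential extension or algebraic independence of logarithms is needed. Since $c_{k,i}=\frac{a^n}{n!}\sum_j\binom{n}{j}(z^k)^{(j)}g_i^{(n-j-1)}$ is built from $g_i$ alone, the principal-part computation can be done directly on these rational functions.
\end{itemize}
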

\begin{proof}
We proceed the proof by induction on $r$. 
Let $a(z)=z\prod_{i=1}^m(z-\alpha_i)$. 
First, consider the base case $r=2$ for any  $r=2$ and any $n\in \N$. 
Fix $1\le i \le m$. 
Lemma~\ref{differential multiple polylogarithm} and the Leibniz rule imply for $0\le \ell \le n-1$ and $1\le k \le m$:
\begin{align*}
\pi\circ L_n(z^\ell f_{2,\alpha_i}) &= \dfrac{a^n}{n!}\sum_{j=0}^{\ell}\binom{n}{j}(\ell-j+1)_j\left(\dfrac{-1}{z}\right)^{(n-j-1)}f_{1,\alpha_i} \mod K[z],\\
\pi\circ L_n(z^\ell f_{(1,1),(\alpha_{i},\alpha_k)}) &= \dfrac{a^n}{n!}\sum_{j=0}^{\ell}\binom{n}{j}(\ell-j+1)_j\left(\dfrac{-\alpha_k}{z(z-\alpha_k)}\right)^{(n-j-1)}f_{1,\alpha_i} \mod K[z],
\end{align*}
where $(a)_j$ is Pochhammer symbol, given by $(a)_0=1$ and $(a)_j=a(a+1)\cdots(a+j-1)$ for $j\ge 1$, and $(-1/z)^{(j)}=\partial^j_z(-1/z)$. 
Since the $K$-space $V_{(m+1)n,1}$ is spanned by the set 
\[
\{\pi(z^kf_{1,\alpha_i}) \mid 0\le k \le (m+1)n-1, 1\le i \le m\},
\]
the surjectivity $\pi\circ L_n:V_{n,2}\rightarrow V_{(m+1)n,1}$ is equivalent to
      \begin{enumerate}
      \item[(*)] The $K$-space $K[z]_{\le (m+1)n-1}$ is spanned by the $(m+1)n$ polynomials 
                               \begin{align*}
&a^n\sum_{j=0}^{\ell}\binom{n}{j}(\ell-j+1)_j\left(\dfrac{-1}{z}\right)^{(n-j-1)}, \\
&a^n\sum_{j=0}^{\ell}\binom{n}{j}(\ell-j+1)_j\left(\dfrac{-\alpha_k}{z(z-\alpha_k)}\right)^{(n-j-1)}
\end{align*}
for $0\le \ell \le n-1$ and $1\le k \le m$.
      \end{enumerate}
Taking into account the identity $\tfrac{-\alpha_k}{z(z-\alpha_k)}=\tfrac{1}{z}-\tfrac{1}{z-\alpha_k}$ and the fact that $\alpha_k$ are pairwise distinct, the partial fraction expansion yields:
\[
\biggl\{\dfrac{P(z)}{a^n(z)} \Bigm\vert  P(z)\in K[z]_{\le (m+1)n-1}\biggr\}={\rm{Span}}_K\biggl\{\left(\dfrac{-1}{z}\right)^{(\ell)}, \left(\dfrac{-\alpha_k}{z(z-\alpha_k)}\right)^{(\ell)} \Bigm\vert 0\le \ell \le n-1, 1\le k \le m\biggr\}.
\] 
This confirms that assertion $(*)$ holds, establishing the base case.

\medskip

Now, assume the statement holds for some $r \ge 2$ and all $n \in \mathbb{N}$. 
For $r+1$, the inductive hypothesis implies it is sufficient to prove the surjectivity of the induced morphism:
\begin{align} \label{final surj}
\pi\circ L_n:V_{n,r+1}/V_{n,r}\longrightarrow V_{(m+1)n,r}/V_{(m+1)n,r-1}.
\end{align}
Fix $(\boldsymbol{s},\boldsymbol{a})=(s_1,\ldots,s_k,\alpha_{i_1},\ldots,\alpha_{i_k})\in \mathcal{S}_r$. 
By Lemma~\ref{differential multiple polylogarithm} and the Leibniz rule, for $0\le \ell \le n-1$,
\begin{align*}
\pi\circ L_n(z^\ell f_{(s_1,\ldots,s_k+1),\boldsymbol{a}}) &\equiv \dfrac{a^n}{n!}\sum_{j=0}^{\ell}\binom{n}{j}(\ell-j+1)_j\left(\dfrac{-1}{z}\right)^{(n-j-1)}f_{\boldsymbol{s},\boldsymbol{a}}  \mod V_{(m+1)n,r},\\
\pi\circ L_n(z^\ell f_{(\boldsymbol{s},1),(\boldsymbol{a},\alpha_{k})}) &\equiv \dfrac{a^n}{n!}\sum_{j=0}^{\ell}\binom{n}{j}(\ell-j+1)_j\left(\dfrac{-\alpha_k}{z(z-\alpha_k)}\right)^{(n-j-1)}f_{\boldsymbol{s},\boldsymbol{a}} \mod V_{(m+1)n,r}.
\end{align*}
Since the space $V_{(m+1)n,r}/V_{(m+1)n,r-1}$ is spanned by the set
\[
\Bigl\{\pi(z^kf_{\boldsymbol{s},\boldsymbol{a}}) \mod V_{(m+1)n,r-1} \mid 0\le k \le (m+1)n-1, ({\boldsymbol{s},\boldsymbol{a}}) \in \mathcal{S}_{r}\Bigr\},
\]
the same argument used in the $r=2$ case ensures the surjectivity of \eqref{final surj}. 
This completes the induction. 
\end{proof} 

\begin{proof}[\textbf{Proof of Proposition~\ref{prop:linear_independence_Kz}}]
{\rm{(i)}} By the definition of $V_n(L)$, we have $\dim_K V_n(L) \le nM_r$. In view of \eqref{Vnr subset Vn}, it suffices to show that 
\begin{equation} \label{dim Vnr}
\dim_K V_{n,r} = nM_r \quad \text{for all } n, r \in \mathbb{N}.
\end{equation}
We proceed by induction on $r$. 
For $r=1$, since the $\alpha_i$ are pairwise distinct, the functions $f_{1,\alpha_i}(z) = \mathrm{Li}_1(\alpha_i/z)$ for $1 \le i \le m$ are linearly independent over $K(z)$ (cf. \cite{Vaa}). Thus, \eqref{dim Vnr} holds for any $n \in \mathbb{N}$.

Next, assume that \eqref{dim Vnr} holds for $r$. For the case $r+1$, Lemma~\ref{surjective} implies the following chain of inequalities:
\begin{align*}
nM_{r+1} \ge \dim_K V_{n,r+1} &= \dim_K (\mathrm{Im}(\pi \circ L_n)) + \dim_K (\ker(\pi \circ L_n)) \\
&\ge \dim_K V_{(m+1)n,r} + \dim_K V_{n,1} \\
&= (m+1)nM_r + mn = nM_{r+1},
\end{align*}
where the last inequality follows from the inductive hypothesis and the fact that $V_{n,1} \subset \ker(\pi \circ L_n)$. This forces $\dim_K V_{n,r+1} = nM_{r+1}$, completing the induction and the proof of (i).

\medskip

{\rm{(ii)}} This follows immediately from Lemma~\ref{lem:linear_indep_equiv} and the dimension formula established in (i).

\medskip

{\rm{(iii)}} By item (i) and the definition of the ideal $I_n(L)$, it is sufficient to show that 
\[
R_n \cdot z^k f_{\bm{s}, \bm{a}} \in K[z] \quad \text{for } 0 \le k \le n-1 \text{ and } (\bm{s}, \bm{a}) \in \mathcal{S}.
\]
Applying Lemma~\ref{differential multiple polylogarithm} repeatedly, we find that
\[
L_{(m+1)^{r-2}n} \dots L_{(m+1)n} L_n \cdot z^k f_{\bm{s}, \bm{a}} \in K[z] \oplus \bigoplus_{i=1}^m K[z]_{\le (m+1)^{r-1}n-1} f_{1, \alpha_i}.
\]
Recalling the case $r=1$ again, we observe that $L_{(m+1)^{r-1}n}$ maps the right-hand side into $K[z]$. Therefore,
\[
R_n \cdot z^k f_{\bm{s}, \bm{a}} \in L_{(m+1)^{r-1}n} \left( K[z] \oplus \bigoplus_{i=1}^m K[z]_{\le (m+1)^{r-1}n-1} f_{1, \alpha_i} \right) \subset K[z],
\]
which completes the proof.
\end{proof}

\subsection{Pad\'{e}-type approximants of multiple polylogarithms}
The next result is a direct consequence of Proposition~\ref{prop:linear_independence_Kz} and Proposition~\ref{Pade}.
In what follows, for the $K$-isomorphism $\Phi$ defined in \eqref{Phi}, we set
\begin{align} \label{varphi s a}
\varphi_{\boldsymbol{s},\boldsymbol{a}}:=\Phi\bigl(f_{\boldsymbol{s},\boldsymbol{a}}\bigr) \ \text{for} \  (\boldsymbol{s},\boldsymbol{a})\in \mathcal{S}.
\end{align}
\begin{corollary}\label{pade multiple polylog}
Let $n,\ell$ be nonnegative integers. Define the polynomials
\begin{align*}
P_{n,\ell}(z) = {\rm Eval}_{t=z}(R^{*}_n\cdot t^{\ell}), \ \ Q_{n,\boldsymbol{s},\boldsymbol{a},\ell}(z)= \varphi_{\boldsymbol{s},\boldsymbol{a}}\!\left(\frac{P_{n,\ell}(z)-P_{n,\ell}(t)}{z-t}\right) \ \text{for} \ (\boldsymbol{s},\boldsymbol{a})\in \mathcal{S}.
\end{align*}
Then the vector
$
\bigl(P_{n,\ell},\,Q_{n,\boldsymbol{s},\boldsymbol{a},\ell}\bigr)_{(\boldsymbol{s},\boldsymbol{a})\in \mathcal{S}}
$
forms a weight $n$ Pad\'{e}-type approximant of 
$
\bigl(f_{\boldsymbol{s},\boldsymbol{a}}(z))_{(\boldsymbol{s},\boldsymbol{a})\in \mathcal{S}}.
$
\end{corollary}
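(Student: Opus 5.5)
The corollary follows by combining Proposition~\ref{prop:linear_independence_Kz} with Proposition~\ref{Pade}. The only real work is checking that $P_{n,\ell}$ is nonzero and has the right degree.

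\textbf{Step 1: the family is a basis of $V_1(L)$.} By Proposition~\ref{prop:linear_independence_Kz}~(i) with $n=1$, the series $f_{\boldsymbol{s},\boldsymbol{a}}$ with $(\boldsymbol{s},\boldsymbol{a})\in\mathcal{S}$ span $V_1(L)$. By (ii) they are linearly independent over $K(z)$, hence over $K$. So they form a $K$-basis of $V_1(L)$ and $\dim_K V_1(L)=M>0$.

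\textbf{Step 2: the Pad\'e property.} By Proposition~\ref{prop:linear_independence_Kz}~(iii), $R_n\in I_n(L)$. It is nonzero, being a product of nonzero elements of the domain $K[z,\partial_z]$. We then apply Proposition~\ref{Pade}~(ii) with $A(t)=t^{\ell}$ and the basis from Step 1. Its polynomials $P$ and $Q_j$ are exactly $P_{n,\ell}$ and $Q_{n,\boldsymbol{s},\boldsymbol{a},\ell}$, so the vector is a weight $n$ Pad\'e-type approximant, provided $P_{n,\ell}\neq0$.

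\textbf{Step 3: $P_{n,\ell}\neq 0$ and its degree.} This is the only step needing care. Each $L_N$ has $\ord_{(1,-1)}(L_N)=mN$ and satisfies property $(P)$: its top-weight coefficient acting on $t^k$ is a product of factors $(k+\cdot)$ and so never vanishes. By Lemma~\ref{equiv of (P)}, $L_N^*$ raises degree by exactly $mN$ on every polynomial. Since $R_n^*=L_n^*L_{(m+1)n}^*\cdots L_{(m+1)^{r-1}n}^*$, composing these gives
\[
\deg P_{n,\ell}=\ell+mn\sum_{i=0}^{r-1}(m+1)^i=\ell+Mn .
\]
In particular $P_{n,\ell}\neq 0$. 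Additivity \eqref{sum of ord} gives $\ord_{(1,-1)}(R_n)=Mn$, consistent with this, and it also shows that $R_n$ satisfies property $(P)$, as later needed for Proposition~\ref{equivalence nonzero det}.

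\textbf{Degree constraint.} Lemma~\ref{equivalence Pade} requires $\deg P\le$ the declared degree, with degree $\ge Mn$. We take the degree to be $Mn+\ell\ge Mn$, which satisfies this. The expansion of the remainders then follows from Proposition~\ref{Pade}~(iii).
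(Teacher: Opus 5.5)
Your proof is correct and follows essentially the same route as the paper: the paper also notes that $\deg R_n^*\cdot t^\ell = Mn+\ell$, so $P_{n,\ell}\neq 0$, and then invokes Proposition~\ref{Pade} with $R_n\in I_n(L)$ from Proposition~\ref{prop:linear_independence_Kz}. You just spell out what the paper leaves implicit: that the $f_{\boldsymbol{s},\boldsymbol{a}}$ form a $K$-basis of $V_1(L)$, and that the degree count comes from property $(P)$ of each factor $L_N$ via Lemma~\ref{equiv of (P)}.
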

\begin{proof}
By definition of $R_n$, we have 
\begin{align} \label{deg R^*_nP}
\deg \, R^{*}_n\cdot t^\ell=Mn+\ell
\end{align}
and, in particular the polynomial $P_{n,\ell}(z)$ is nonzero. Proposition~\ref{Pade} implies that the statement holds.
\end{proof}
We keep the notation in Corollary~\ref{pade multiple polylog}.
Define the determinant of the $(m+1)^r\times (m+1)^r$ matrix
\[
\Delta_{n}(z)
  := \det
  \begin{pmatrix}
  P_{n,\ell}(z) \\
  Q_{n,\boldsymbol{s},\boldsymbol{a},\ell}(z)
  \end{pmatrix}_{\substack{0\le \ell \le M \\ (\boldsymbol{s},\boldsymbol{a})\in \mathcal{S}}}.
\]
\begin{corollary}\label{multiple polylog det non zero}
We have $\Delta_{n}(z)\in K^{\times}$.
\end{corollary}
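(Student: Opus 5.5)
The plan is to deduce the statement from Proposition~\ref{equivalence nonzero det}, applied to the operator $L=L_{(m+1)^{r-1}}\cdots L_{m+1}L_1$. We take the $K$-basis $(f_{\boldsymbol{s},\boldsymbol{a}})_{(\boldsymbol{s},\boldsymbol{a})\in\mathcal{S}}$ of $V_1(L)$ and the operators $R_n\in I_n(L)$ from Proposition~\ref{prop:linear_independence_Kz}~(iii). Condition (a) of that proposition is exactly Proposition~\ref{prop:linear_independence_Kz}~(ii). Hence, once the hypotheses of Section~\ref{section: linear independence pade} and of Proposition~\ref{equivalence nonzero det} are checked, we obtain $\Delta_n(z)\in K^{\times}$ for all $n$.

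\textbf{Step 1: the setup for $L$.} First compute $d=\ord_{(1,-1)}(L)$. Each factor satisfies $\ord_{(1,-1)}(L_N)=mN$, so additivity \eqref{sum of ord} gives
\[
d=m\sum_{i=0}^{r-1}(m+1)^i=(m+1)^r-1=M.
\]
Next, $L$ must have property $(P)$. By Lemma~\ref{equiv of (P)} this means $\deg L^*\cdot P=\deg P+M$ for every polynomial $P$. Since $L^*=L_1^*L_{m+1}^*\cdots L_{(m+1)^{r-1}}^*$ and each $L_N^*$ raises degree by exactly $mN$ (again by Lemma~\ref{equiv of (P)}, because $L_N$ has property $(P)$), degrees simply add under composition. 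Lemma~\ref{property P implies}~(i) then gives $\dim_K V_1(L)=M$. This matches $\#\mathcal{S}=M$, and by Proposition~\ref{prop:linear_independence_Kz}~(i) with $n=1$ the $f_{\boldsymbol{s},\boldsymbol{a}}$ form a basis of $V_1(L)$.

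\textbf{Step 2: hypotheses (i) and (ii) for $R_n$.} The same two arguments apply to $R_n=L_{(m+1)^{r-1}n}\cdots L_n$. Additivity gives
\[
\ord_{(1,-1)}(R_n)=mn\sum_{i=0}^{r-1}(m+1)^i=Mn=dn,
\]
which is hypothesis (i). The composition argument for $R_n^*$ gives $\deg R_n^*\cdot P=\deg P+Mn$, which is \eqref{deg R^*_nP}; by Lemma~\ref{equiv of (P)} this is hypothesis (ii).

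\textbf{Step 3: conclusion, and the main check.} Proposition~\ref{equivalence nonzero det}, direction (a)$\Rightarrow$(b), now yields $\Delta_n(z)\in K^{\times}$. The determinant there is built from the $P_{n,\ell}$ and $Q_{n,j,\ell}$, which are exactly the polynomials of Corollary~\ref{pade multiple polylog}, so it coincides with the $\Delta_n(z)$ defined above.

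The only point needing genuine care is the claim that $L_N$ satisfies property $(P)$, which the paper says one easily verifies. Here $L_N$ has a single term with $m_j-j=mN$, namely $j=N$, $m_j=(m+1)N$, with leading coefficient $1/N!$. The condition becomes $(k+mN+1)_{(m+1)N}/N!\neq0$ for all $k\ge 0$, and this Pochhammer symbol is a product of positive integers, hence nonzero in characteristic~$0$.

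Equivalently, one checks directly that $L_N^*\cdot t^k=\tfrac{1}{N!}\partial_t^N\bigl(t^{N+k}\prod_i(t-\alpha_i)^N\bigr)$ has leading coefficient $\tfrac{(k+mN+N)!}{N!\,(k+mN)!}\neq 0$. Either way, the degree statement for compositions follows immediately.
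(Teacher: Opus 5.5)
Your proposal is correct and follows the paper's own proof. Like the paper, you check hypothesis (i) of Proposition~\ref{equivalence nonzero det} for $R_n$ by additivity of $\ord_{(1,-1)}$, check property $(P)$ through the exact degree of $R_n^*\cdot t^\ell$ and Lemma~\ref{equiv of (P)}, and take condition (a) from Proposition~\ref{prop:linear_independence_Kz}~(ii); your extra verifications (that $L$ has property $(P)$ with $d=M$, that the $f_{\boldsymbol{s},\boldsymbol{a}}$ form a basis of $V_1(L)$, and that $L_N$ has property $(P)$) only make explicit what the paper leaves implicit, and the sole slip is a harmless missing factor $(-1)^N$ (coming from $\iota$) in your formula for $L_N^*\cdot t^k$.
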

\begin{proof}
Equation \eqref{sum of ord} together with the definition of $\ord_{(1,-1)}$ yields
\[
\ord_{(1,-1)}(R_{n}) = Mn = \ord_{(1,-1)}(L)n,
\]
so $R_n$ satisfies property ${\rm(i)}$.  
Next, combining equation~\eqref{deg R^*_nP} with Lemma~\ref{equiv of (P)} implies that $R_n$ satisfies property $(P)$.  
As we showed in Proposition~\ref{prop:linear_independence_Kz}~$({\rm{ii}})$, the $K$-basis $\{f_{\boldsymbol{s},\boldsymbol{a}}\}_{(\boldsymbol{s},\boldsymbol{a})\in \mathcal{S}}$ of $V_1(L)$ together with $1$ are linearly independent over 
$K(z)$. Applying Proposition~\ref{equivalence nonzero det} implies the assertion.
\end{proof}
\begin{remark}
Corollary~\ref{pade multiple polylog} and Corollary~\ref{multiple polylog det non zero} were established by Sorokin \cite[Lemma~2, Lemma~6]{S2} in the case where $\alpha_1,\ldots,\alpha_m$ are positive rational numbers.
\end{remark}

\section{Estimates} \label{section: estimates} 
We keep the notation in Section~\ref{section: Pade multiple polylog}. 
We fix positive integers $m,r$ and an algebraic number field $K$. 
Let $\bm{\alpha} = (\alpha_1, \dots, \alpha_m) \in (K^{\times})^m$ be a vector whose coordinates are pairwise distinct. 
Recall that $M = (m+1)^r - 1$ and the morphism $\varphi_{\boldsymbol{s},\boldsymbol{a}}$ defined in~\eqref{varphi s a} for $(\boldsymbol{s},\boldsymbol{a})\in \mathcal{S}$.  

\medskip

For $0 \le \ell \le M$ and $(\bm{s}, \bm{a}) \in \mathcal{S}$, Corollary~\ref{pade multiple polylog} states that the polynomials
\begin{equation*} 
    P_{n,\ell}(z) = \mathrm{Eval}_{t=z}(R^{*}_n \cdot z^{\ell}), \quad 
    Q_{n,\bm{s},\bm{a},\ell}(z) = \varphi_{\bm{s},\bm{a}} \left( \frac{P_{n,\ell}(z) - P_{n,\ell}(t)}{z-t} \right)
\end{equation*}
form Pad\'{e}-type approximants to $(f_{\bm{s},\bm{a}}(z))_{(\bm{s},\bm{a}) \in \mathcal{S}}$. 

In this section, we describe the asymptotic behavior as $n \to \infty$ of the polynomials $P_{n,\ell}(z)$ and $Q_{n,\bm{s},\bm{a},\ell}(z)$ evaluated at $\beta \in K$, as well as the Pad\'{e}-type approximation 
\begin{equation*}
    P_{n,\ell}(\beta) f_{\bm{s},\bm{a}}(\beta) - Q_{n,\bm{s},\bm{a},\ell}(\beta)
\end{equation*}
for $\beta \in K$ with $|\beta|_v > H_v(\boldsymbol{\alpha})$.

Furthermore, we fix the following notation. Let $v \in \mathfrak{M}_K$. 
Recall $\varepsilon_v = 1$ if $v \mid \infty$ and $\varepsilon_v = 0$ if $v \nmid \infty$. 
For a polynomial $P(z) = \sum_{k=0}^n p_k z^k \in K[z]$, we denote by $\|P\|_v$ the maximum $v$-adic absolute value of its coefficients:
\begin{equation*}
    \|P\|_v := \max_{0 \le k \le n} \{ |p_k|_v \}.
\end{equation*}
For $\beta \in K$, using the triangle inequality when $v \in \mathfrak{M}^{\infty}_K$ (resp. the strong triangle inequality when $v \in \mathfrak{M}^{f}_K$), we obtain
\begin{equation} \label{triangle inequality}
    \log |P(\beta)|_v \le \varepsilon_v \log (\deg P + 1) + \log \|P\|_v + \deg P\cdot h_v(\beta).
\end{equation}

Let $n$ be a positive integer. We denote by $d_n$ the least common multiple of $1, \dots, n$. In the following, we define the differential operator
\begin{equation*}
    \mathcal{L}_n = \frac{1}{n!} \partial^n_z z^n \prod_{i=1}^m (z - \alpha_i)^n \in K[z, \partial_z].
\end{equation*}
Note that 
\[
P_{n,\ell}(z)=(-1)^{\tfrac{M}{m}}\mathcal{L}_n\mathcal{L}_{(m+1)n}\ldots \mathcal{L}_{(m+1)^{r-1}n}\cdot z^{\ell}.
\]
In the following, we refer the symbols $o(1)$ and $o(n)$ refer to the limit as $n \to \infty$.

We begin by establishing the following fundamental lemma, which provides the necessary estimates for the $v$-adic norms of our constructions.

\begin{lemma} \label{lemma:integrality}
Let $v \in \mathfrak{M}_K$, $P \in K[z]$ be a polynomial of degree $N$, and $n$ be a positive integer. The following assertions hold$:$
\begin{enumerate}
    \item[\rm(i)] $\displaystyle \left\| \frac{1}{n!} \partial^n_z \cdot z^n P \right\|_v \le \binom{n+N}{n}^{\frac{d_v}{d}\varepsilon_v} \|P\|_v$.
    \item[\rm(ii)] $\displaystyle \left\| \prod_{i=1}^m (z - \alpha_i)^n \right\|_v \le (n+1)^{m\varepsilon_v} \cdot 2^{mn \frac{d_v}{d}\varepsilon_v} \prod_{i=1}^m H_v(\alpha_i)^n$.
    \item[\rm(iii)] $\displaystyle \| \mathcal{L}_n \cdot P \|_v \le (mn + N + 1)^{(m+1)\varepsilon_v} \left( 2^{mn} \binom{(m+1)n+N}{n} \right)^{\frac{d_v}{d}\varepsilon_v} \prod_{i=1}^m H_v(\alpha_i)^n \|P\|_v$.
    \item[\rm(iv)] For $(\bm{s}, \bm{a}) \in \mathcal{S}$, we have
    \begin{equation*}
        |\varphi_{\bm{s},\bm{a}}(P(t))|_v \le (N+1)^{(r+1)\varepsilon_v} |d_{N+1}^r|_v^{\varepsilon_v-1} H_v(\bm{\alpha})^{N+1} \|P\|_v.
    \end{equation*}
    \item[\rm(v)] For $(\bm{s}, \bm{a}) \in \mathcal{S}$, let $Q(z) = \varphi_{\bm{s},\bm{a}} \left( \frac{P(z)-P(t)}{z-t} \right)$. Then
    \begin{equation*}
        \|Q\|_v \le (N+1)^{(r+1)\varepsilon_v} |d_{N+1}^r|_v^{\varepsilon_v-1} H_v(\bm{\alpha})^{N+1} \|P\|_v.
    \end{equation*}
\end{enumerate}
\end{lemma}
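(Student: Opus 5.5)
We prove the five assertions by direct coefficient computations, each one feeding into the next.

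For (i), write $P(z)=\sum_{k=0}^N p_k z^k$. Then
\[
\frac{1}{n!}\partial_z^n z^{n}P=\sum_{k=0}^N \binom{n+k}{n}p_k z^k .
\]
The binomial coefficients are integers, so they have $v$-adic absolute value at most $1$ when $v\nmid\infty$. When $v\mid\infty$ they are bounded by $\binom{n+N}{n}^{d_v/d}$ in the normalized absolute value. This gives (i).

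For (ii), expand each factor $(z-\alpha_i)^n$ by the binomial theorem.
\begin{itemize}
\item At a finite place, the strong triangle inequality gives $\|(z-\alpha_i)^n\|_v\le H_v(\alpha_i)^n$.
\item At an infinite place, the coefficients are bounded by $2^{n d_v/d}H_v(\alpha_i)^n$.
\end{itemize}
Multiplying $m$ polynomials, each of degree at most $n$, costs a factor of at most $(n+1)^{m\varepsilon_v}$ for the number of terms in the coefficient convolution. This gives (ii).

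For (iii), write
\[
\mathcal{L}_n\cdot P=\frac1{n!}\partial_z^n z^n\Bigl(\prod_i(z-\alpha_i)^nP\Bigr).
\]
First estimate $\|\prod_i(z-\alpha_i)^nP\|_v$ by (ii), paying one more convolution factor $(mn+N+1)^{\varepsilon_v}$. Then apply (i) with $N$ replaced by $mn+N$. The exponent $(m+1)\varepsilon_v$ absorbs $(n+1)^{m\varepsilon_v}$ and this extra factor, since $n+1\le mn+N+1$.

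For (iv), by linearity we have $|\varphi_{\bm s,\bm a}(P)|_v\le (N+1)^{\varepsilon_v}\|P\|_v\max_{k\le N}|f_k|_v$, where $f_k$ is the coefficient of $z^{-k-1}$ in $f_{\bm s,\bm a}(z)$. So we need to describe these coefficients explicitly. Expanding $\L_{\bm s}(\alpha_{i_1}/\alpha_{i_2},\ldots,\alpha_{i_k}/z)$ gives
\[
f_{k}=\alpha_{i_k}^{k+1}\sum_{0<n_1<\dots<n_{j}=k+1}\frac{(\alpha_{i_1}/\alpha_{i_2})^{n_1}\cdots}{n_1^{s_1}\cdots n_j^{s_j}} ,
\]
where the telescoping makes each summand a monomial $\alpha_{i_1}^{n_1}\alpha_{i_2}^{n_2-n_1}\cdots\alpha_{i_j}^{n_j-n_{j-1}}$ of total degree $k+1$. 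Each such monomial has $v$-adic size at most $H_v(\bm\alpha)^{k+1}$. This requires care: the exponents are nonnegative, so we bound each $|\alpha_i|_v$ by $\max(1,|\alpha_i|_v)\le H_v(\bm\alpha)$. The denominators $n_1^{s_1}\cdots n_j^{s_j}$ divide $d_{k+1}^{|\bm s|}$, and $|\bm s|\le r$, so:
\begin{itemize}
\item at finite places $|f_k|_v\le |d_{N+1}^r|_v^{-1}H_v(\bm\alpha)^{N+1}$;
\item at infinite places, the number of summands is at most $(k+1)^{j-1}\le (N+1)^{r}$ and each denominator is at least $1$.
\end{itemize}
Combining these gives the factor $(N+1)^{(r+1)\varepsilon_v}$ and the exponent $|d^r_{N+1}|_v^{\varepsilon_v-1}$, which is exactly (iv). I expect this step to be the main obstacle. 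Getting the telescoped monomial bound and the counting of summands to land exactly in the stated exponent needs attention; in particular one must keep track of the depth $j$ against $r$.

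For (v), use
\[
\frac{P(z)-P(t)}{z-t}=\sum_{k}p_k\sum_{a+b=k-1}z^at^b .
\]
Hence the coefficient of $z^a$ in $Q$ is $\varphi_{\bm s,\bm a}\bigl(\sum_{k>a}p_kt^{k-1-a}\bigr)$. This is $\varphi_{\bm s,\bm a}$ applied to a polynomial of degree less than $N$ whose coefficients are among the $p_k$. Applying (iv) to it gives the same bound, and (v) follows.
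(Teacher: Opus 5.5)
Your proposal is correct and follows essentially the same route as the paper: explicit coefficient formulas for (i)--(iii), with the convolution count giving the $(n+1)^{m\varepsilon_v}$ and $(mn+N+1)^{\varepsilon_v}$ factors; the telescoped formula for $\varphi_{\bm{s},\bm{a}}(t^j)$, with denominators dividing $d_{j+1}^r$, for (iv); and the expansion of $(P(z)-P(t))/(z-t)$ followed by (iv) for (v). The one point you leave implicit is that in (v) the bound of (iv) is monotone in the degree, since $d_{N-a}\mid d_{N+1}$ and $H_v(\bm{\alpha})\ge 1$, which is immediate.
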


\begin{proof}
Write $P = \sum_{k=0}^{N} p_k z^k$. 

\medskip
\noindent
{\rm(i)} A direct computation yields the identity
\[
\frac{1}{n!} \partial_z^n \cdot z^n P(z) = \sum_{j=0}^{N} \binom{j+n}{n} p_j z^j.
\]
By the definition of the $v$-adic norm, we obtain
\[
\left\| \frac{1}{n!} \partial_z^n \cdot z^n P(z) \right\|_v \le \max_{0 \le j \le N} \left\{ \left| \binom{j+n}{n} \right|_v |p_j|_v \right\} \le \binom{n+N}{n}^{\frac{d_v}{d}\varepsilon_v} \|P\|_v,
\]
which proves {\rm(i)}.

\medskip
\noindent
{\rm(ii)} Expanding the product, we have
\[
\prod_{i=1}^m (z - \alpha_i)^n = \sum_{k=0}^{mn} \left( \sum_{\substack{0 \le k_i \le n \\ \sum k_i = k}} \prod_{i=1}^m \binom{n}{k_i} (-\alpha_i)^{n-k_i} \right) z^k.
\]
Using the inequality $\binom{n}{k} \le 2^n$ and applying the ultrametric inequality (if $v \in \mathfrak{M}^f_K$) or the standard triangle inequality (if $v \in \mathfrak{M}^\infty_K$), we obtain the desired estimate.

\medskip
\noindent
{\rm(iii)} For any $Q \in K[z]$, the $v$-adic norm satisfies
\begin{equation} \label{PQ}
\|PQ\|_v \le (\deg P + \deg Q + 1)^{\varepsilon_v} \|P\|_v \|Q\|_v.
\end{equation}
Applying {\rm(i)} to the polynomial $P \prod_{i=1}^m (z-\alpha_i)^n$, we find
\[
\|\mathcal{L}_n\cdot P\|_v \le \binom{(m+1)n+N}{n}^{\frac{d_v}{d}\varepsilon_v} \left\| P \prod_{i=1}^m (z-\alpha_i)^n \right\|_v.
\]
Combining this with \eqref{PQ} and the estimate from {\rm(ii)} yields the result.

\medskip
\noindent
{\rm(iv)} Let $(\bm{s}, \bm{a}) \in \mathcal{S}$ with $\bm{s} \in \mathbb{N}^k$ and $\bm{a}=(\alpha_{i_1}, \ldots, \alpha_{i_k})$. By the definition of $\varphi_{\bm{s},\bm{a}}$, for $j \in \mathbb{Z}_{\ge 0}$, we have
\[
\varphi_{\bm{s},\bm{a}}(t^j) = 
\begin{cases} 
0 & \text{if } j < k, \\ 
\displaystyle \sum_{0 < n_1 < \dots < n_{k-1} < j+1} \frac{\alpha_{i_1}^{n_1} \cdots \alpha_{i_k}^{j+1-n_{k-1}}}{n_1^{s_1} \cdots n_{k-1}^{s_{k-1}} (j+1)^{s_k}} & \text{if } j \ge k.
\end{cases}
\]
Bounding the coefficients leads to
\[
|\varphi_{\bm{s},\bm{a}}(t^j)|_v \le (j+1)^{r\varepsilon_v} |d_{j+1}^r|_v^{\varepsilon_v-1} H_v(\bm{\alpha})^{j+1},
\]
from which the general estimate for $P(t)$ follows by the triangle inequality.

\medskip
\noindent
{\rm(v)} For $k \ge 1$, we use the identity $(z^k - t^k)/(z-t) = \sum_{u=0}^{k-1} t^{k-1-u} z^u$. Thus,
\[
\frac{P(z)-P(t)}{z-t} = \sum_{u=0}^{N-1} \left( \sum_{k=u+1}^{N} p_k t^{k-1-u} \right) z^u.
\]
Applying $\varphi_{\bm{s},\bm{a}}$ with respect to $t$, we obtain
\[
Q(z) = \sum_{u=0}^{N-1} \left( \sum_{k=u+1}^{N} p_k \varphi_{\bm{s},\bm{a}}(t^{k-1-u}) \right) z^u.
\]
Invoking {\rm(iv)} and the $v$-adic triangle inequality, we conclude
\[
\|Q\|_v = \max_{0 \le u \le N-1} \left| \sum_{k=u+1}^{N} p_k \varphi_{\bm{s},\bm{a}}(t^{k-1-u}) \right|_v \le (N+1)^{(r+1)\varepsilon_v} |d_{N+1}^r|_v^{\varepsilon_v-1} H_v(\bm{\alpha})^{N+1} \|P\|_v.
\]
This completes the proof.
\end{proof}

\subsection{Absolute values of the Pad\'{e}-type approximants} \label{subsection: Abs estimate}
The aim of this subsection is to prove the following proposition. 
\begin{proposition} \label{estimate approximants}
Let $v\in \mathfrak{M}_K$ and $\beta\in K$. Let $n\ge 0$ and $0\le \ell\le M$ be integers.
\begin{align*}
\log\max_{(\boldsymbol{s},\boldsymbol{a})\in \mathcal{S}}\{|P_{n,\ell}(\beta)|_v,|Q_{n,\boldsymbol{s},\boldsymbol{a},\ell}(\beta)|_v\} 
&\le n \biggl(Mh_v(\beta)+\varepsilon_v\dfrac{d_v}{d} \Bigl( M\log 2 + \dfrac{r(r+1)}{2}\log(m+1) + r \Bigr)\\
&+  Mh_v(\boldsymbol{\alpha})+\dfrac{M}{m}\sum_{i=1}^m h_v(\alpha_i)+o(1)\biggr) + (\varepsilon_v-1)\log |d^r_{Mn+M}|_v,
\end{align*}
where $o(1)=0$ for almost all places $v$.
\end{proposition}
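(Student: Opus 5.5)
The plan is to bound the coefficient norm $\|P_{n,\ell}\|_v$ first, then pass to $\|Q_{n,\boldsymbol{s},\boldsymbol{a},\ell}\|_v$ via Lemma~\ref{lemma:integrality}~(v), and finally evaluate at $\beta$ using \eqref{triangle inequality}.

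\textbf{Bounding $P_{n,\ell}$.} I use the factorization
\[
P_{n,\ell}(z)=\pm\mathcal{L}_n\mathcal{L}_{(m+1)n}\cdots\mathcal{L}_{(m+1)^{r-1}n}\cdot z^{\ell}
\]
and apply Lemma~\ref{lemma:integrality}~(iii) $r$ times, starting with the innermost operator $\mathcal{L}_{(m+1)^{r-1}n}$ acting on $z^\ell$, for which $\|z^\ell\|_v=1$ and $N=\ell$.

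Each $\mathcal{L}_{N'}$ with $N'=(m+1)^jn$ raises the degree by exactly $mN'$. Before that operator is applied, the current degree is
\[
N_j=\ell+mn\sum_{i=j+1}^{r-1}(m+1)^i=\ell+n\bigl((m+1)^r-(m+1)^{j+1}\bigr).
\]
The final degree is therefore $Mn+\ell$, because $\sum_{j=0}^{r-1}m(m+1)^j=M$. Collecting the factors from the $r$ applications:
\begin{itemize}
\item The $H_v(\alpha_i)^{N'}$ factors multiply to $\prod_i H_v(\alpha_i)^{n\sum_j(m+1)^j}=\prod_iH_v(\alpha_i)^{Mn/m}$. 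This produces the term $\tfrac{M}{m}\sum_i h_v(\alpha_i)$.
\item The factors $2^{mN'}$ give $2^{Mn}$, which produces $M\log 2$. This only occurs at archimedean places, through the exponent $\tfrac{d_v}{d}\varepsilon_v$.
\item The factors $(\deg+1)^{(m+1)\varepsilon_v}$ are polynomial in $n$, so they contribute $e^{o(n)}$, and only at infinite places.
\item The remaining factors are the binomials $\binom{(m+1)N'+N_j}{N'}$. I bound them by $\log\binom{A}{B}\le B(1+\log(A/B))$, using $A/B\le (m+1)^{r-j}+o(1)$. After summing over $j$, this should yield the constant $\tfrac{r(r+1)}{2}\log(m+1)+r$ multiplying $n$, up to $o(n)$.
\end{itemize}
Evaluating at $\beta$ with \eqref{triangle inequality} and $\deg P_{n,\ell}=Mn+\ell$ adds $(Mn+\ell)h_v(\beta)=n\,(Mh_v(\beta)+o(1))$. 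At a finite place where all $|\alpha_i|_v\le1$ and $|\beta|_v\le 1$, every term above vanishes exactly, which gives $o(1)=0$ for almost all $v$.

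\textbf{Bounding $Q_{n,\boldsymbol{s},\boldsymbol{a},\ell}$.} I apply Lemma~\ref{lemma:integrality}~(v) with $P=P_{n,\ell}$ and $N=Mn+\ell\le Mn+M$. Compared with the bound for $\|P_{n,\ell}\|_v$, this adds three factors:
\begin{itemize}
\item $(N+1)^{(r+1)\varepsilon_v}$, which is $e^{o(n)}$;
\item $|d^r_{N+1}|_v^{\varepsilon_v-1}$, which is at most $|d^r_{Mn+M}|_v^{\varepsilon_v-1}$ because $d_{N+1}$ divides $d_{Mn+M}$ up to negligible index shifts (I will take the index as $Mn+M+1$ if necessary);
\item $H_v(\boldsymbol{\alpha})^{N+1}=\exp\bigl(n(Mh_v(\boldsymbol{\alpha})+o(1))\bigr)$.
\end{itemize}
Since $\deg Q\le N-1$, evaluating at $\beta$ again costs at most $n(Mh_v(\beta)+o(1))$. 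Taking the maximum of the bounds for $P$ and $Q$ gives the displayed inequality, since every term in the bound for $P$ is dominated by the corresponding term for $Q$. The bounded additive constants, divided by $n$, are absorbed into $o(1)$, and again they vanish at all but finitely many places.

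\textbf{Main obstacle.} The delicate step is the binomial bookkeeping that produces exactly $\tfrac{r(r+1)}{2}\log(m+1)+r$. A naive estimate gives $\sum_j n(m+1)^j\bigl(1+(r-j)\log(m+1)\bigr)$, which is weighted by $(m+1)^j$. To match the stated constant I would first try the sharper identity
\[
\binom{(m+1)N'+N_j}{N'}\le (m+1)^{(r-j)N'}e^{N'}
\]
and check whether the weights telescope against the $2^{mN'}$ and $M$ terms. If they do not, I would re-examine whether Lemma~\ref{lemma:integrality}~(i) can be applied with the binomial's top index measured relative to $n$ rather than to $N'$, so that each of the $r$ steps contributes only $n(1+(r-j)\log(m+1))$. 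Summing that over $j=0,\dots,r-1$ gives exactly $n\bigl(r+\tfrac{r(r+1)}{2}\log(m+1)\bigr)$.
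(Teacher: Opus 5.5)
Your setup matches the paper's. You iterate Lemma~\ref{lemma:integrality}~(iii) from the innermost $\mathcal{L}_{(m+1)^{r-1}n}$, collect $2^{Mn}$ and $\prod_i H_v(\alpha_i)^{Mn/m}$, pass to $Q_{n,\boldsymbol{s},\boldsymbol{a},\ell}$ via Lemma~\ref{lemma:integrality}~(v), and evaluate with \eqref{triangle inequality}; all of that is fine. The gap is exactly the one you flag, and it cannot be closed. The binomial produced at step $j$ is $\binom{(m+1)^{r}n+\ell}{(m+1)^{j}n}$, so honest bookkeeping gives at best the $(m+1)^j$-weighted sum you wrote. The product of these binomials is in fact $\exp\bigl(n(m+1)^r\sum_{j=0}^{r-1}\eta((m+1)^{j-r})+o(n)\bigr)$ with $\eta(p)=-p\log p-(1-p)\log(1-p)$, not $\exp\bigl(n(\tfrac{r(r+1)}{2}\log(m+1)+r)+o(n)\bigr)$. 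Your proposed repairs do not help. The ``sharper identity'' is the same estimate exponentiated. Nothing can telescope against the separate positive factors $2^{mN'}$. And Lemma~\ref{lemma:integrality}~(i) applied to $\frac{1}{N'!}\partial_z^{N'}z^{N'}$ unavoidably yields a binomial with lower index $N'=(m+1)^jn$, not $n$.

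In fact no argument can reach the stated constant, because the binomial bound is sharp. The leading coefficient of $P_{n,\ell}$ is exactly $\pm\prod_{j=0}^{r-1}\binom{(m+1)^rn+\ell}{(m+1)^jn}$. By Gauss--Lucas, all roots of $P_{n,\ell}$ lie in the convex hull of $0,\alpha_1,\dots,\alpha_m$. Take $K=\mathbb{Q}$, $v=\infty$, $m=1$, $\alpha_1=1$, $r=3$ (so $M=7$), $\ell=0$, and an integer $\beta\ge 10$. Then $\log|P_{n,0}(\beta)|\ge\log\bigl[\binom{8n}{n}\binom{8n}{2n}\binom{8n}{4n}\bigr]+7n\log(\beta-1)$, and $\frac1n$ times the log of the product tends to $8\bigl(\eta(\tfrac18)+\eta(\tfrac14)+\eta(\tfrac12)\bigr)\approx 13.06$. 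Since $7\log(1-1/\beta)\ge 7\log(0.9)>-0.74$, this gives $\frac1n\log|P_{n,0}(\beta)|\ge 7\log\beta+12.32+o(1)$. The proposition allows only $7\log\beta+13\log 2+3+o(1)\approx 7\log\beta+12.01+o(1)$, a contradiction for large $n$.

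The paper's proof reaches the stated constant only because its first display bounding $\|P_{n,\ell}\|_v$ writes the $j$-th binomial as $\binom{(m+1)^{j+1}n+M}{n}$. That is not what Lemma~\ref{lemma:integrality}~(iii) gives; it is precisely your fallback of measuring the binomial relative to $n$, and it is equally unjustified. What your argument does prove is the estimate with $\tfrac{r(r+1)}{2}\log(m+1)+r$ replaced by $\sum_{j=0}^{r-1}(m+1)^j\bigl(1+(r-j)\log(m+1)\bigr)$, or by the entropy expression above. That corrected constant would then have to be carried into $V_{v_0}(\boldsymbol{\alpha},\beta)$ in Theorem~\ref{main}.
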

\begin{proof}
First we estimate for $P_{n,\ell}$. 
Applying Lemma~\ref{lemma:integrality} $m$-times, we see 
\begin{align} \label{first |Pnl|}
\|P_{n,\ell}\|_v\le e^{o(n)}\prod_{j=0}^{r-1}\left[\left(\binom{(m+1)^{j+1}n+M}{n}2^{m(m+1)^jn}\right)^{\varepsilon_v\tfrac{d_v}{d}}\prod_{i=1}^{m}H_v(\alpha_i)^{(m+1)^jn}\right],
\end{align}
where $o(n)=0$ when $v\in \mathfrak{M}^{f}_K$. Stirling formula for the binomial coefficient $\binom{(m+1)^jn+M}{n}$ implies 
\[
\log \binom{(m+1)^jn+M}{n}\le n\left( \log\left(\dfrac{(m+1)^{j(m+1)^j}}{((m+1)^j-1)^{(m+1)^j-1}}\right) + o(1)\right) \ \ \text{for} \ \ j\ge1.
\]
Using the inequality $x\log(x)-(x-1)\log(x-1)<\log(x)+1$ for $x>1$, we have
\[
\sum_{j=1}^r\log\left(\dfrac{(m+1)^{j(m+1)^j}}{((m+1)^j-1)^{(m+1)^j-1}}\right)\le \sum_{j=1}^r(j\log(m+1)+1)=\dfrac{r(r+1)}{2}\log(m+1)+r.
\]
Combining the above inequality with~\eqref{first |Pnl|} yields
\begin{align} \label{|Pnl|}
\log\,||P_{n,\ell}||_v\le n\left(\varepsilon_v\dfrac{d_v}{d} \Bigl( M\log 2 + \dfrac{r(r+1)}{2}\log(m+1) + r \Bigr)+  \dfrac{M}{m}\sum_{i=1}^m h_v(\alpha_i)+o(1)\right),
\end{align}
where $o(1)=0$ for $v\in \mathfrak{M}^{f}_K$.

\medskip

Combining the equations~\eqref{triangle inequality} and~\eqref{|Pnl|} together with ${\rm{deg}}\,P_{n,\ell}=Mn+\ell$ yields
\begin{align*}
\log|P_{n,\ell}(\beta)|_v\le n\left(\varepsilon_v\dfrac{d_v}{d} \Bigl( M\log 2 + \dfrac{r(r+1)}{2}\log(m+1) + r \Bigr)+  \dfrac{M}{m}\sum_{i=1}^m h_v(\alpha_i)+Mh_v(\beta)+o(1)\right).
\end{align*}

Applying Lemma~\ref{lemma:integrality} ${\rm(v)}$ for $P_{n,\ell}$ together with the estimate~\eqref{|Pnl|}, we have
\begin{align*}
\log ||Q_{n,\boldsymbol{s},\boldsymbol{a},\ell}||_v & \le n(Mh_v(\boldsymbol{\alpha})+o(1)) + (\varepsilon_v-1)\log |d^r_{Mn+M}|_v + \log||P_{n,\ell}||_v \\
                         & \le n \biggl(\varepsilon_v\dfrac{d_v}{d} \Bigl( M\log 2 + \dfrac{r(r+1)}{2}\log(m+1) + r \Bigr)\\
                         &+  Mh_v(\boldsymbol{\alpha})+\dfrac{M}{m}\sum_{i=1}^m h_v(\alpha_i)+o(1)\biggr) + (\varepsilon_v-1)\log |d^r_{Mn+M}|_v.
\end{align*}
Therefore the equation~\eqref{triangle inequality} together with ${\rm{deg}}\,Q_{n,\boldsymbol{s},\boldsymbol{a},\ell}=Mn+\ell-1$ yields 
\begin{align*}
\log |Q_{n,\boldsymbol{s},\boldsymbol{a},\ell}(\beta)|_v& \le n \biggl(Mh_v(\beta)+\varepsilon_v\dfrac{d_v}{d} \Bigl( M\log 2 + \dfrac{r(r+1)}{2}\log(m+1) + r \Bigr)\\
                         &+  Mh_v(\boldsymbol{\alpha})+\dfrac{M}{m}\sum_{i=1}^m h_v(\alpha_i)+o(1)\biggr) + (\varepsilon_v-1)\log |d^r_{Mn+M}|_v
\end{align*}
as desired.
\end{proof}

\subsection{Absolute values of the Pad\'{e}-type approximations}
Let $0\le n$ and $0 \le \ell \le M$ be integers and $(\boldsymbol{s},\boldsymbol{a})\in \mathcal{S}$. 
We keep the notation introduced in Section~\ref{subsection: Abs estimate}. Let $v \in \mathfrak{M}_K$. 
This section is devoted to estimating the $v$-adic absolute values of the Pad\'{e}-type approximations 
\[
\mathfrak{R}_{n,\boldsymbol{s},\boldsymbol{a},\ell}(z) = P_{n,\ell}(z)f_{\boldsymbol{s},\boldsymbol{a}}(z)-Q_{n,\boldsymbol{s},\boldsymbol{a},\ell}(z)
\] 
evaluated at some point $\beta \in K$. 
\begin{proposition} \label{Remainder}
Let $v\in \mathfrak{M}_K$ and $\beta \in K$ with $|\beta|_v > H_v(\boldsymbol{\alpha})$.
Then the Laurent series $\mathfrak{R}_{n,\boldsymbol{s},\boldsymbol{a},\ell}(z)$ converges at $z = \beta$ and satisfies
\[
\begin{aligned}
\log \max_{\substack{(\boldsymbol{s},\boldsymbol{a})\in \mathcal{S} \\ 0\le \ell \le M}} |\mathfrak{R}_{n,\boldsymbol{s},\boldsymbol{a},\ell}(\beta)|_v 
&\le n \biggl( -h_v(\beta) + \dfrac{M}{m}\sum_{i=1}^m h_v(\alpha_i) + (M+1)h_v(\boldsymbol{\alpha}) \\
&\quad + \varepsilon_v \frac{d_v}{d} \Bigl( M \log 2 + \frac{r(r+1)}{2} \log(m+1) + r \Bigr) + o(1) \biggr).
\end{aligned}
\]
\end{proposition}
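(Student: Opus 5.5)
Our plan is to work directly with the expansion \eqref{expand R} supplied by Proposition~\ref{Pade}~(iii): for each $(\boldsymbol{s},\boldsymbol{a})\in\mathcal{S}$ and $0\le\ell\le M$,
\[
\mathfrak{R}_{n,\boldsymbol{s},\boldsymbol{a},\ell}(z)=\sum_{k=Mn}^{\infty}\frac{\varphi_{\boldsymbol{s},\boldsymbol{a}}(t^kP_{n,\ell}(t))}{z^{k+1}}.
\]
Here we use that $R_n\in I_n(L)$ and that $V_n(L)$ contains $\pi(z^kf)$ for $0\le k\le Mn-1$ with $f\in V_1(L)$, because $V_{Mn}(L)\subseteq V_1(R_n)$ by the dimension count of Corollary~\ref{multiple polylog det non zero}. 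More precisely, in the proof of Proposition~\ref{equivalence nonzero det} we showed $V_n(L)=V_1(R_n)$, but the vanishing we actually need comes from the fact that $\varphi_{\boldsymbol{s},\boldsymbol{a}}(t^kR_n^*\cdot t^\ell)=0$ for all $k$ below $\ord$. If this only gives vanishing for $k\le n-1$, the series simply starts at $k=n$. That weaker starting index would ruin the exponent $-h_v(\beta)$, so we first check that the orthogonality gives vanishing for $0\le k\le Mn-1$. This is the standard "Nikishin" phenomenon and holds here: since $R_n^*=\mathcal{L}_{(m+1)^{r-1}n}^*\cdots$ and $R_n$ kills $\pi(z^kf)$ for $f\in V_1(L)$, we have $\pi(z^kf)\in V_1(R_n)$. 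By $\dim V_1(R_n)=Mn$ together with the nested structure, the functionals $t^k\mapsto\varphi(t^kP)$ vanish for $k<Mn$. If we cannot establish this cleanly, we fall back on $k\ge n$ and accept the weaker bound. The stated bound, with only $-h_v(\beta)$ multiplied by $n$ and no factor $M$, is in fact consistent with the series starting at $k=n$.

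Assume then that the series starts at $k=n$. For each coefficient we apply Lemma~\ref{lemma:integrality}~(iv) to the polynomial $t^kP_{n,\ell}(t)$, which has degree $N=k+Mn+\ell$ and norm $\|P_{n,\ell}\|_v$. This gives
\[
|\varphi_{\boldsymbol{s},\boldsymbol{a}}(t^kP_{n,\ell})|_v\le (N+1)^{(r+1)\varepsilon_v}|d_{N+1}^r|_v^{\varepsilon_v-1}H_v(\boldsymbol{\alpha})^{N+1}\|P_{n,\ell}\|_v.
\]
At a non-archimedean $v$, the factor $|d_{N+1}^r|_v^{-1}$ grows with $k$. To control it we avoid Lemma~\ref{lemma:integrality}~(iv) in that case and bound $|\varphi(t^j)|_v$ by $H_v(\boldsymbol\alpha)^{j+1}\max_{n_i\le j+1}|n_1^{s_1}\cdots|_v^{-1}$. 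Because $|\beta|_v>H_v(\boldsymbol\alpha)$, the geometric gain $(H_v(\boldsymbol\alpha)/|\beta|_v)^{k}$ dominates the polynomially-logarithmic loss from $|d_{k}|_v^{-1}$, which satisfies $\log|d_k|_v^{-1}\le \frac{d_v}{d}\cdot r\log k$ per prime. Since $|\beta|_v>H_v(\boldsymbol\alpha)\ge1$ and only finitely many places contribute, this loss is $e^{o(n)}$. At almost all finite places $H_v(\boldsymbol\alpha)=|\beta|_v=1$ would violate the hypothesis, so the proposition is only applied where the hypothesis holds, and the $o(1)$ is allowed.

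We then sum the geometric series. For archimedean $v$, $\sum_{k\ge n}(k+Mn+M+1)^{r+1}H_v(\boldsymbol\alpha)^{k+Mn+M+1}|\beta|_v^{-k-1}$ is bounded by $e^{o(n)}H_v(\boldsymbol\alpha)^{(M+1)n}|\beta|_v^{-n}$, using $H_v(\boldsymbol\alpha)/|\beta|_v<1$. For non-archimedean $v$, the maximum of the terms is attained near $k=n$ and gives the same expression. Multiplying by the bound \eqref{|Pnl|} on $\|P_{n,\ell}\|_v$ yields exactly $n\bigl(-h_v(\beta)+(M+1)h_v(\boldsymbol\alpha)+\frac Mm\sum h_v(\alpha_i)+\varepsilon_v\frac{d_v}{d}(\cdots)+o(1)\bigr)$, since $h_v(\beta)=\log|\beta|_v$. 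Convergence at $z=\beta$ follows from the same termwise bound. The main obstacle is the uniform control of the denominators $d_{N+1}^r$ at finite places in the infinite tail, which we handle by absorbing them into the strict geometric decay.
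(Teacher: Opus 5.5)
Once you start the series at $k=n$, your argument is the paper's proof. You expand $\mathfrak{R}_{n,\boldsymbol{s},\boldsymbol{a},\ell}$ via Proposition~\ref{Pade}~(iii) and bound each coefficient by Lemma~\ref{lemma:integrality}~(iv) applied to $t^kP_{n,\ell}$. The denominators cost only a polynomial factor, $|d_N^r|_v^{-1}\le N^{r d_v/d}$: this is $e^{o(n)}$ near $k=n$, and in the tail it is absorbed by the decay of $(H_v(\boldsymbol{\alpha})/|\beta|_v)^k$. You then sum (archimedean) or take the maximum (ultrametric) to get $e^{o(n)}|\beta|_v^{-n}H_v(\boldsymbol{\alpha})^{(M+1)n}\|P_{n,\ell}\|_v$, and insert the bound on $\|P_{n,\ell}\|_v$ from the proof of Proposition~\ref{estimate approximants}. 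Your ultrametric bound at finite places is not really a way of avoiding Lemma~\ref{lemma:integrality}~(iv); it is the same estimate.

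You should delete the opening paragraph, because its claims are false, not just unproved. For $M\ge 2$:
\begin{itemize}
\item The coefficients $\varphi_{\boldsymbol{s},\boldsymbol{a}}(t^kP_{n,\ell}(t))$ do not vanish for $n\le k<Mn$. Lemma~\ref{const} together with Corollary~\ref{multiple polylog det non zero} gives $\Theta_n\neq 0$. That says exactly that the matrix $\bigl(\varphi_{\boldsymbol{s},\boldsymbol{a}}(t^nP_{n,\ell}(t))\bigr)$, with $0\le \ell\le M-1$, is invertible. So already at $k=n$ the coefficients are not all zero.
\item The inclusion $V_{Mn}(L)\subseteq V_1(R_n)$ is impossible. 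Proposition~\ref{prop:linear_independence_Kz} gives $\dim_K V_{Mn}(L)=M^2n$, while $\dim_K V_1(R_n)=Mn$.
\item Starting the series at $k=n$ does not ``ruin'' the exponent. The factor $|\beta|_v^{-n}$ it produces is precisely the term $-nh_v(\beta)$ in the statement, as you yourself note two sentences later.
\end{itemize}
With that paragraph removed and the expansion cited directly from Proposition~\ref{Pade}~(iii), the proof is complete.
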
 
\begin{proof} 
Using Proposition~\ref{Pade}~(iii), we have the representation
\[
\mathfrak{R}_{n,\boldsymbol{s},\boldsymbol{a},\ell}(z) = \sum_{k=0}^{\infty} \frac{\varphi_{\boldsymbol{s}, \boldsymbol{\alpha}}(t^{k+n}P_{n,\ell}(t))}{z^{k+n+1}}.
\]
Applying Lemma~\ref{lemma:integrality}~(v) to the polynomial $P(t) = t^{k+n}P_{n,\ell}(t)$, it follows that
\begin{align*}
|\varphi_{\boldsymbol{s},\boldsymbol{a}}(t^{k+n}P_{n,\ell}(t))|_v \le \, & ((M+1)n+M+k+1)^{(r+1)\varepsilon_v} \left|d_{(M+1)n+M+k+1}^r \right|_v^{\varepsilon_v-1} \\
& \quad \cdot H_v(\boldsymbol{\alpha})^{(M+1)n+M+k+1} \|P_{n,\ell}\|_v. 
\end{align*}
By employing the bound $|d_N|_v^{\varepsilon_v-1} \le N^{(\varepsilon_v-1)d_v/d}$ for $N \in \mathbb{N}$, we obtain
\begin{align*}
|\varphi_{\boldsymbol{s},\boldsymbol{a}}(t^{k+n}P_{n,\ell}(t))|_v \le e^{o(n)} H_v(\boldsymbol{\alpha})^{(M+1)n+k+1} (k+1)^{r+1} \|P_{n,\ell}\|_v.
\end{align*}

\medskip

\textbf{Case 1}: $v \in \mathfrak{M}^{\infty}_K$. If $v$ is an Archimedean place, the condition $|\beta|_v > H_v(\boldsymbol{\alpha})$ ensures that the numerical series 
\[
\sum_{k=0}^{\infty} \frac{H_v(\boldsymbol{\alpha})^{k+1}(k+1)^{r+1}}{|\beta|^{k+1}_v}
\]
converges in $K_v$. Consequently, the remainder series $\mathfrak{R}_{n,\boldsymbol{s},\boldsymbol{a},\ell}(\beta)$ converges in $K_v$ and satisfies
\begin{equation} \label{v infty R}
|\mathfrak{R}_{n,\boldsymbol{s},\boldsymbol{a},\ell}(\beta)|_v \le e^{o(n)} |\beta|^{-n}_v H_v(\boldsymbol{\alpha})^{(M+1)n} \|P_{n,\ell}\|_v.
\end{equation}

\medskip

\textbf{Case 2}: $v \in \mathfrak{M}^{f}_K$. If $v$ is a non-Archimedean place, the condition $|\beta|_v > H_v(\boldsymbol{\alpha})$ similarly implies the convergence of $\mathfrak{R}_{n,\boldsymbol{s},\boldsymbol{a},\ell}(\beta)$ in $K_v$. By the strong triangle inequality, we have
\begin{align}
|\mathfrak{R}_{n,\boldsymbol{s},\boldsymbol{a},\ell}(\beta)|_v &\le e^{o(n)} \max_{k \ge 0} \left\{ |\beta|^{-k-n-1}_v H_v(\boldsymbol{\alpha})^{(M+1)n+k+1} (k+1)^{r+1} \|P_{n,\ell}\|_v \right\} \nonumber \\
&= e^{o(n)} |\beta|^{-n}_v H_v(\boldsymbol{\alpha})^{(M+1)n} \|P_{n,\ell}\|_v. \label{v mid infty R} 
\end{align}
The desired inequality follows by taking the logarithm of~\eqref{v infty R} and~\eqref{v mid infty R} and applying the bound~\eqref{|Pnl|}. 
\end{proof}

\section{Proof of Main theorem} \label{section: proof of main theorem}
This section is devoted to the proof of Theorem~\ref{main}. 
We use a linear independence criterion from \cite[Proposition $5.6$]{DHK3} \footnote{We easily see that the criterion \cite[Proposition~$5.6$]{DHK3} is also verified replacing $\lim_n \tfrac{1}{n}\sum_{v}F_v(n)<\infty$ by $\limsup_n \tfrac{1}{n}\sum_{v}F_v(n)<\infty$.}, which is based on the method of C.~F.~Siegel ({\it see} \cite{Siegel}).

We keep notations in Section~\ref{section: Pade multiple polylog} and \ref{section: estimates}.
First let us recall the necessary notation. Let $m,r$ be positive integers and $K$ be an algebraic number field. 
Fix a place $v_0 \in \mathfrak{M}_K$, $\boldsymbol{\alpha}=(\alpha_1,\ldots,\alpha_m)\in (K^{\times})^m$ whose coordinates are pairwise distnct and an element $\beta \in K$ with $|\beta|_{v_0} >H_{v_0}(\boldsymbol{\alpha})$. 
The quantity $V_{v_0}(\boldsymbol{\alpha},\beta)$ is defined in \eqref{V} for $v_0$. 

\medskip

Define the following $(m+1)^r \times (m+1)^r$ matrix $M_n$ as:
\begin{equation} \label{Mn}
M_n =  \begin{pmatrix}
  P_{n,\ell}(\beta) \\
  Q_{n,\boldsymbol{s},\boldsymbol{a},\ell}(\beta)
  \end{pmatrix}_{\substack{0\le \ell \le M \\ (\boldsymbol{s},\boldsymbol{a})\in \mathcal{S}}}
\in \mathrm{Mat}_{(m+1)^r}(K).
\end{equation}
By Corollary~\ref{multiple polylog det non zero}, the matrix $M_n$ is invertible.

\begin{proof}[\textbf{Proof of Theorem~\ref{main}}]
For $v \in \mathfrak{M}_K$, we define functions $F_v \colon \mathbb{N} \longrightarrow \mathbb{R}_{\ge 0}$ by
\begin{align*}
F_v(n) &= n \biggl(M h_v(\beta) + \dfrac{M}{m}\sum_{i=1}^mh_v(\alpha_i)+Mh_v(\boldsymbol{\alpha}) + \varepsilon_v \frac{d_v}{d} \left( M \log 2 + \dfrac{r(r+1)}{2}\log(m+1)+r \right) + o(1)\biggr)  \\
&\quad   + (\varepsilon_v-1) \log |d^r_{Mn+M}|_v,
\end{align*}
where $o(1)=0$ if $v\nmid \infty$. 
Proposition~\ref{estimate approximants} allows us to obtain the bound
\[
\log \max_{\substack{(\boldsymbol{s},\boldsymbol{a})\in \mathcal{S} \\ 0 \le \ell \le M}}  \{ |P_{n,\ell}(\beta)|_v, |Q_{n,\boldsymbol{s},\boldsymbol{a},\ell}(\beta)|_v \} \le F_v(n).
\]
Define a quantity
\[
\mathbb{A}_{v_0}(\beta)= n \biggl( -h_{v_0}(\beta) + \dfrac{M}{m}\sum_{i=1}^m h_{v_0}(\alpha_i) + Mh_{v_0}(\boldsymbol{\alpha}) + \varepsilon_{v_0} \frac{d_{v_0}}{d} \Bigl( M \log 2 + \frac{r(r+1)}{2} \log(m+1) + r \Bigr) + o(1) \biggr).
\]
Then Proposition~\ref{Remainder} yields
\[
\log \max_{\substack{(\boldsymbol{s},\boldsymbol{a})\in \mathcal{S} \\ 0 \le \ell \le M}} \{|\mathfrak{R}_{n,\boldsymbol{s},\boldsymbol{a},\ell}(\beta)|_{v_0}\} \le -\mathbb{A}_{v_0}(\beta)n + o(n).
\]
The prime number theorem (see \cite{R-S1}) implies
\[
\limsup_{n \to \infty} \frac{1}{n} \log d^r_{Mn+M} = rM,
\]
from which we obtain
\[
\mathbb{A}_{v_0}(\beta) - \limsup_{n \to \infty} \frac{1}{n} \sum_{v \neq v_0} F_v(n) \le V_{v_0}(\beta).
\]
Finally, we apply the linear independence criterion \cite[Proposition 5.6]{DHK3} for the values
\[
\theta_{\boldsymbol{s},\boldsymbol{a}} = f_{\boldsymbol{s},\boldsymbol{a}}(\beta) \quad \text{for} \ \ (\boldsymbol{s},\boldsymbol{a})\in \mathcal{S},
\]
the sequence of matrices $(M_n)_{n \ge 0}$ defined in \eqref{Mn} and $F_v$. Combined with the estimates above, we obtain Theorem~\ref{main}.
\end{proof}
\begin{proof}[\textbf{Proof of Corollary~\ref{main cor}}]
Let $s \in \mathbb{N}$. A straightforward computation yields the derivative
\[
\partial_z \cdot \mathrm{Li}_{s}(\alpha_i/z) =
\begin{cases}
-\dfrac{1}{z} \mathrm{Li}_{s-1}(\alpha_i/z) & \text{if } s > 1, \\[10pt]
-\dfrac{\alpha_i}{z(z-\alpha_i)} & \text{if } s = 1.
\end{cases}
\]
This equality together with Leibniz formula implies that the set of monomials
\[
\mathcal{T} := \{\L_{s_1}(\alpha_{i_1}/z)\ldots\L_{s_k}(\alpha_{i_k}/z) \mid   \boldsymbol{s}=(s_1,\ldots,s_k)\in \cup_{k=1}^r\N^k \ \text{with} \ |\boldsymbol{s}|\le r  \ \text{and} \  1\le i_j \le m\}
\]
is contained in the vector space $V_1(L)$. It is a well-known result that the set of functions 
\[
\{ \mathrm{Li}_s(\alpha_i/z) \mid 1 \le s \le r, \, 1 \le i \le m \}
\]
is algebraically independent over $K(z)$. 
In particular, the elements of $\mathcal{T}$ are linearly independent over $K$. 
Consequently, there exists a $K$-basis $\mathcal{B}$ of $V_1(L)$ that contains $\mathcal{T}$. 
Theorem~\ref{main} asserts that, under the assumption $V_{v_0}(\bm{\alpha}, \beta) > 0$, the set $\{1\} \cup \{ f(\beta) \mid f \in \mathcal{B} \}$ is linearly independent over $K$. It follows immediately that the subset 
\[
 \{1\} \cup \{\L_{s_1}(\alpha_{i_1}/\beta)\ldots\L_{s_k}(\alpha_{i_k}/\beta) \mid   \boldsymbol{s}=(s_1,\ldots,s_k)\in \cup_{k=1}^r\N^k \ \text{with} \ |\boldsymbol{s}|\le r  \ \text{and} \  1\le i_j \le m\}
\]
is linearly independent over $K$, as claimed.
\end{proof}

\section{Appendix: Pad\'{e}-type approximants for powers of logarithms} \label{section: Pade power of log}
Keeping the notation from Section \ref{rodrigues formula}, we fix a positive integer $m$ and specialize the results of Sections \ref{rodrigues formula} and \ref{section: linear independence pade}. 
Specifically, in a formal way, we construct Pad\'{e}-type approximants for the vector of logarithmic powers:
\[
\bigl(\log(1-1/z),\ldots,\log^{m}(1-1/z)\bigr)\in (1/z\cdot \Q[[1/z]])^{m}.
\]
To this end, we define the differential operator 
\[
L_m := \bigl(z(z-1)\partial_z\bigr)^{m} \in \Q[z,\partial_z].
\]
One easily verifies that $\ord_{(1,-1)}(L_m)=m$ and $L_m$ satisfies property $(P)$. Furthermore, we have
\[
V_1(L_m)={\rm{Span}}_{\Q}\{\log^{s}(1-1/z)\mid 1\le s \le m\}.
\]
\begin{theorem} \label{R_n of power of log}
Let $n$ be a positive integer.  
Then the differential operator
\[
R_n
  := \frac{1}{(n!)^{m}}\bigl(\,z^{n}(z-1)^n\partial_{z}^{n}\bigr)^{m}
  \in \Q[z,\partial_z]
\]
belongs to the left ideal $I_n(L_m)$ $($see Definition~$\ref{Rodrigues ideal})$.
\end{theorem}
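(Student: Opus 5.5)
The plan is to check the defining condition of $I_n(L_m)$ directly on a spanning set of $V_n(L_m)$, using the factorized shape $R_n=D_n^{m}$, where
\[
D_n:=\frac{1}{n!}\,z^n(z-1)^n\partial_z^n .
\]
Put $\ell(z):=\log(1-1/z)$ and $\theta:=z(z-1)\partial_z$, so that $\theta\cdot \ell=1$. From $V_1(L_m)=\mathrm{Span}_{\Q}\{\ell^s\mid 1\le s\le m\}$ we get $V_n(L_m)=\mathrm{Span}_{\Q}\{\pi(z^k\ell^s)\mid 0\le k\le n-1,\ 1\le s\le m\}$. Since $R_n$ maps $\Q[z]$ into $\Q[z]$, identity \eqref{pi circ L} reduces the statement to
\[
R_n\cdot\bigl(p(z)\,\ell^s\bigr)\in\Q[z]\qquad\text{for all } p\in\Q[z]_{\le n-1},\ 1\le s\le m .
\]

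First I will rewrite $D_n$ in terms of $\theta$. A one-line computation gives $\theta\,(z(z-1))^k\partial_z^k=(z(z-1))^k\bigl(k(2z-1)\partial_z^k+z(z-1)\partial_z^{k+1}\bigr)$. Hence $(z(z-1))^{k+1}\partial_z^{k+1}=(\theta-k(2z-1))(z(z-1))^k\partial_z^k$, and by induction
\[
D_n=\frac{1}{n!}\bigl(\theta-(n-1)(2z-1)\bigr)\cdots\bigl(\theta-(2z-1)\bigr)\theta .
\]
Because $\theta$ is a derivation with $\theta\ell=1$, $D_n$ preserves the $\Q[z]$-module $\bigoplus_i\Q[z]\ell^i$ and does not raise the $\ell$-degree. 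The coefficient of $\ell^s$ in $D_n(p\ell^s)$ equals $D_n(p)=\tfrac{1}{n!}(z(z-1))^np^{(n)}$, which vanishes for $\deg p\le n-1$. So $D_n$ strictly lowers the $\ell$-degree on $\Q[z]_{\le n-1}\ell^s$.

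The heart of the proof is the following \emph{key lemma}:
\[
D_n\bigl(\Q[z]_{\le n-1}\,\ell^s\bigr)\subseteq\bigoplus_{0\le i<s}\Q[z]_{\le n-1}\,\ell^i .
\]
Granting it, iterating $m$ times and using $s\le m$ shows that $R_n(p\ell^s)=D_n^m(p\ell^s)$ has $\ell$-degree $<0$ in every term except the constant one. Hence $R_n(p\ell^s)$ is a polynomial, which proves the theorem. The degree bound $\le n-1$ is exactly what is needed to reapply the vanishing of the top coefficient at each of the $m$ steps. Small cases support the lemma: for $n=2$ one computes $D_2(z\ell)=-z$ and $D_2(z\ell^2)=-z\ell+z$.

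I expect the key lemma to be the main obstacle. A naive Leibniz count only gives coefficients of degree $\le 2n-2$, which is too weak. My first attempt will be an induction on $s$ via a ``shift'' trick. Since $D_n$ has coefficients in $\Q[z]$ and acts coefficientwise on polynomials in $\ell$ (treating $\ell+c$ with a formal constant $c$, since $\theta(\ell+c)=1$), the identity $D_n(p(\ell+c)^s)=\sum_i B_i(\ell+c)^i$ holds, where $D_n(p\ell^s)=\sum_iB_i\ell^i$. Comparing coefficients of $c^j$ gives the triangular relations
\[
\binom{s}{j}D_n(p\ell^{s-j})=\sum_{i\ge j}\binom{i}{j}B_i\,\ell^{i-j},
\]
which express $B_j$ through $D_n(p\ell^{t})$ for $t<s$ (controlled by induction) and through the already treated $B_i$, $i>j$. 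The degree of $B_j$ can then be read off from the expansion at $z=\infty$, using $\ell=-1/z+O(1/z^2)$ and $\ord_{(1,-1)}(D_n)=n$.

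If this growth count still falls short of degree $n-1$, I will switch to the dual side, via Lemma~\ref{equivalence R in I_n(L)} and Corollary~\ref{corollary inc}. There the condition reads $t^kR_n^*\cdot\Q[t]\subseteq\ker\varphi_{\ell^s}$. I will realize $\varphi_{\ell^s}$ as an integral over $[0,1]$ against a polynomial in $\lambda(t)=\log((1-t)/t)$ of degree $s-1$ (note $t(t-1)\lambda'=1$). Then I integrate by parts block by block, using that $(t(t-1))^n$ kills all boundary terms. In either form, the conclusion is equivalent to $V_n(L_m)\subseteq V_1(R_n)$, and both spaces have dimension $mn$: the first by Lemma~\ref{lem:linear_indep_equiv} and the $\Q(z)$-independence of $1,\ell,\dots,\ell^m$, the second by Lemma~\ref{property P implies}, since $\ord_{(1,-1)}(R_n)=mn$ and $R_n$ satisfies property $(P)$. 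This gives a consistency check on the degree bounds.
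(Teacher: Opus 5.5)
Your reduction to showing $D_n(p\ell^s)\in\Q[z]$ for $\deg p\le n-1$, your factorization of $D_n$ through $\theta$, and your key lemma match the skeleton of the paper's proof. The paper shows that $E_n\cdot z^k\ell^s$ (with $E_n=n!\,D_n$) lies in $\Q[z]\oplus\bigoplus_{1\le j<s}\Q[z]_{\le n-1}\ell^j$ and then iterates $m$ times; your version also bounds the $\ell^0$-part. However, that lemma is the whole content of the theorem, and your proposal does not prove it.

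The step you single out as decisive is reading off degrees ``from the expansion at $z=\infty$, using $\ell=-1/z+O(1/z^2)$ and $\ord_{(1,-1)}(D_n)=n$''. As stated, this gives only $\ord_{\infty}D_n(p\ell^s)\ge s-(n-1)-n$, i.e.\ $\deg B_0\le 2n-1-s$, which is too weak whenever $s<n$. The fallback does not escape the difficulty. Since $t(t-1)\lambda'=1$, after integrating one block by parts you must know that $(t(t-1))^n\partial_t^n(t^k\lambda^j)$ lies in $\bigoplus_{i<j}\Q[t]_{\le n-1}\lambda^i$, which is the same key lemma in the variable $t$. Finally, the equality $\dim V_n(L_m)=\dim V_1(R_n)=mn$ cannot by itself give the inclusion.

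Your shift route does close once two things are made explicit. First, comparing $\ell^0$-coefficients in your $c^j$-relation gives directly $B_j=\binom{s}{j}[\ell^0]D_n(p\ell^{s-j})$ for $1\le j\le s$. So an induction on $s$ whose hypothesis includes the degree bound on the $\ell^0$-part (as your formulation does) gives $\deg B_j\le n-1$ for all $j\ge1$, with no growth argument and no triangular recursion. Second, for $B_0$ you must apply the order count to $\pi(p\ell^s)$ rather than to $p\ell^s$. The polynomial part of $p\ell^s$ has degree $\le n-2$ and is killed by $D_n$, so $D_n(p\ell^s)=D_n(\pi(p\ell^s))$ has $\ord_{\infty}\ge 1-n$. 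Since $\ord_{\infty}(B_i\ell^i)\ge 2-n$ for $i\ge1$, you get $\deg B_0\le n-1$. With the base case $D_n(p)=0$, this proves the lemma.

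The repaired argument is genuinely different from the paper's, which inducts on $n$ rather than $s$. The paper uses the commutation identity $E_{n+1}z=z(E_1-(n-1)z-1)E_n$ to control $E_n\cdot z^{n-1}\ell^s$, where a top-degree cancellation is needed. For $k<n-1$ it writes $E_n=\prod_{j=k+1}^{n-1}(\theta-j(2z-1))\,E_{k+1}$, with each first-order factor raising coefficient degrees by at most one, and it never needs to bound the $\ell^0$-part. Your route replaces these explicit operator identities by the translation symmetry $\ell\mapsto\ell+c$ (equivalently, $D_n$ commutes with $\partial/\partial\ell$) plus one order count at infinity. It also yields slightly more, e.g.\ $D_n^{s+1}(p\ell^s)=0$.
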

To prove Theorem~\ref{R_n of power of log}, we prepare several lemmas. In the following of this section, for any positive integer $n$, we put
\[
E_n=z^n(z-1)^n\partial^n_z.
\]
\begin{lemma}\label{basic identities}
Let $n$ be a positive integer. Then, in the ring $K[z,\partial_z]$, the following identities hold.
\begin{enumerate}
      \item[{\rm{\rm(i)}}] $E_n=(E_1-(n-1)(2z-1))\cdots (E_1-(2z-1)) E_1$.
      \item[{\rm(ii)}] $E_{n+1}z=z(E_1-(n-1)z-1)E_n$.
\end{enumerate}
\end{lemma}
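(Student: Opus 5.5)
The plan is to derive a one-step recursion for $E_n$ by commuting a single factor $\partial_z$ past the polynomial $z^n(z-1)^n$. Statement (i) then follows by induction on $n$, and statement (ii) follows from the same recursion after commuting $\partial_z^{n+1}$ past $z$. All computations take place in the Weyl algebra $K[z,\partial_z]$ and use only the commutation rule $\partial_z\, g(z)=g(z)\partial_z+g'(z)$ for a polynomial $g$.

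\emph{Key recursion.} We aim to show
\[
E_{n+1}=\bigl(E_1-n(2z-1)\bigr)E_n\qquad(n\ge 1).
\]
First write $E_{n+1}=z(z-1)\cdot z^n(z-1)^n\,\partial_z\,\partial_z^n$. The derivative of $z^n(z-1)^n$ is $n(2z-1)z^{n-1}(z-1)^{n-1}$, so the commutation rule gives
\[
z^n(z-1)^n\partial_z=\partial_z\, z^n(z-1)^n-n(2z-1)z^{n-1}(z-1)^{n-1}.
\]
Substituting this, the first term becomes $z(z-1)\partial_z\cdot z^n(z-1)^n\partial_z^n=E_1E_n$. The second term becomes $n(2z-1)\,z^n(z-1)^n\partial_z^n=n(2z-1)E_n$. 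This proves the recursion. Statement (i) then follows by induction on $n$: the case $n=1$ is trivial, and each step multiplies on the left by the next factor $E_1-n(2z-1)$. The order of the factors in (i), with the largest shift on the left, matches this left multiplication.

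\emph{Identity (ii).} We use $\partial_z^{n+1}z=z\partial_z^{n+1}+(n+1)\partial_z^n$. This gives
\[
E_{n+1}z=z\bigl[z^{n+1}(z-1)^{n+1}\partial_z^{n+1}+(n+1)z^n(z-1)^{n+1}\partial_z^n\bigr]=z\bigl[E_{n+1}+(n+1)(z-1)E_n\bigr].
\]
Next, insert the key recursion for $E_{n+1}$. The bracket becomes $\bigl(E_1-n(2z-1)+(n+1)(z-1)\bigr)E_n$. The polynomial part simplifies as
\[
-n(2z-1)+(n+1)(z-1)=-(n-1)z-1,
\]
which is exactly the operator $E_1-(n-1)z-1$ appearing in (ii).

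There is no real obstacle here. The only delicate point is to keep track of left versus right multiplication in the noncommutative ring, since the scalar polynomial factors $n(2z-1)$ and $(n+1)(z-1)$ must sit to the left of $E_n$. The pivotal step is therefore the single commutation producing $n(2z-1)E_n$ in the key recursion, and it should be verified carefully.
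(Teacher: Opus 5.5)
Your proof is correct and follows essentially the same route as the paper. You derive the recursion $E_{n+1}=(E_1-n(2z-1))E_n$ by commuting one $\partial_z$ past $z^n(z-1)^n$, get (i) by induction, and get (ii) from $\partial_z^{n+1}z=z\partial_z^{n+1}+(n+1)\partial_z^n$ followed by substituting the recursion; you also spell out the commutation $z^n(z-1)^n\partial_z=\partial_z z^n(z-1)^n-n(2z-1)z^{n-1}(z-1)^{n-1}$, which the paper leaves implicit.
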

\begin{proof}
${\rm(i)}$ We show the desire identity by induction on $n$. In case $n=1$, the identity is trivial. Assume the identity holds for $n\ge1$. 
Using the induction hypothesis, we get
\begin{align*}
E_{n+1}&=z(z-1)\left(z^n(z-1)^n\partial_z\right)\partial^n_z\\
                                             &=\left(E_1-n(2z-1)\right)E_n\\
                                             &=\left(E_1-n(2z-1)\right) (E_1-(n-1)(2z-1))\cdots (E_1-(2z-1))E_1.
\end{align*}
${\rm(iii)}$ Using the identity
\[
\partial^{n+1}z=z\partial^{n+1}_z+(n+1)\partial^n_z,
\]
the left hand side of the desire identity can be computed
\begin{align*}
E_{n+1}z&=z^{n+1}(z-1)^{n+1}(z\partial^{n+1}_z+(n+1)\partial^n_z)\\
&=z\left(z(z-1)\left(z^n(z-1)^n\partial_z\right)\partial^n_z+(n+1)(z-1)E_n\right)\\
&=z\left(E_1-(n-1)z-1\right)E_n.
\end{align*}
This completes the proof of Lemma~\ref{basic identities}.
\end{proof}

\begin{lemma}\label{basic relation}
Let $n,s$ be positive integers. Then we have
\[
E_n\cdot z^{n-1}\log^s(1-1/z)\in \Q[z]\oplus \bigoplus_{j=1}^{s-1} \Q[z]_{\le n-1}\log^j(1-1/z),
\]
where $\bigoplus_{j=1}^{s-1} \Q[z]_{\le n-1}\log^j(1-1/z)=\{0\}$ if $s=1$.
\end{lemma}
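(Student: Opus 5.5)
The plan is to prove the statement by induction on $s$, expanding the action of $E_n=z^n(z-1)^n\partial_z^n$ on the product $z^{n-1}\cdot\ell^s$ with the Leibniz rule. Here we write $\ell(z)=\log(1-1/z)$. The basic identity is
\[
\partial_z\cdot \ell=\frac{1}{z(z-1)},\qquad \partial_z\cdot \ell^{j}=\frac{j\,\ell^{j-1}}{z(z-1)}.
\]
Thus $E_1\cdot\ell^{j}=j\ell^{j-1}$, and derivatives of $\ell^s$ are combinations of $\ell^{j}$ with rational coefficients whose only poles are at $0$ and $1$.

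First I will record the structure of $\partial_z^k\cdot \ell^s$ for $k\ge1$. By induction on $k$ we have
\[
\partial_z^k\cdot\ell^s=\sum_{j=0}^{s-1}\frac{c_{k,j}(z)}{z^k(z-1)^k}\,\ell^{j}
\]
with $c_{k,j}\in\Q[z]$ and $\deg c_{k,j}\le k-1$. To see this, note that differentiating $c(z)z^{-k}(z-1)^{-k}$ raises the denominator by one factor $z(z-1)$ and the numerator degree by at most one. The term with $\partial\ell^{j}$ contributes $c\,j\ell^{j-1}/(z^{k+1}(z-1)^{k+1})$, whose numerator keeps degree $\le k-1\le k$. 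The terms with $j=0$ are simply rational functions.

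Now apply Leibniz:
\[
E_n\cdot z^{n-1}\ell^s=\sum_{k=0}^{n}\binom{n}{k}\,z^n(z-1)^n\,\partial_z^{n-k}(z^{n-1})\,\partial_z^k\ell^s.
\]
For $k=0$ the term is zero, because $\partial_z^n z^{n-1}=0$; this is exactly why the exponent $n-1$ is used. So $\ell^s$ itself never appears. For $1\le k\le n$, the term equals
\[
\binom nk z^{n-k}(z-1)^{n-k}\,(\text{const})\,z^{k-1}\sum_{j<s}c_{k,j}(z)\,\ell^{j},
\]
since $\partial_z^{n-k}z^{n-1}$ is a constant times $z^{k-1}$. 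Everything is therefore a polynomial combination of $\ell^{j}$ with $j\le s-1$. The $j=0$ part lies in $\Q[z]$.

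The main obstacle is the degree bound: the coefficient of $\ell^{j}$ for $j\ge1$ must have degree $\le n-1$. The naive count gives degree $\le 2(n-k)+(k-1)+(k-1)=2n-2$, which is too weak. So a finer analysis is needed. I would handle it by expanding around $z=\infty$ via the order function.

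For $j\ge1$ the coefficient of $\ell^{j}$ in $E_n\cdot z^{n-1}\ell^s$ is a polynomial. Its degree can be read off from the growth at infinity, provided the total expression behaves well. Here we use that $\ell\in 1/z\cdot\Q[[1/z]]$ and that $\ell^{j}$ for different $j$ are algebraically independent over $\Q(z)$, so the coefficients are uniquely determined.

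Alternatively, the cleaner route is to use Lemma~\ref{basic identities}(i) and write $E_n$ as a product of the operators $E_1-i(2z-1)$. Each factor acts on a term $p(z)\ell^{j}$ by
\[
(E_1-i(2z-1))\cdot p\,\ell^{j}=\bigl(z(z-1)p'-i(2z-1)p\bigr)\ell^{j}+jp\,\ell^{j-1}.
\]
The first part has degree $\le\deg p+1$ with leading coefficient $(\deg p-2i)$ times the leading coefficient of $p$. This vanishes when $\deg p=2i$, but that is not the relevant degree here. So instead I expect to show that $z(z-1)p'-i(2z-1)p$ preserves the space $\Q[z]_{\le n-1}$ only in combination with the $\ell^{j-1}$ contributions.

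If this bookkeeping becomes messy, I would fall back on a direct verification using the kernel description. By Corollary~\ref{corollary inc}, the statement is equivalent to a property of $\varphi_{\ell^s}$ restricted to $E_n^*$ applied to polynomials. The degree bound $\le n-1$ would then follow by induction on $s$ combined with the Rodrigues structure of Example~\ref{order 1}, with $a_1=z(z-1)$. In that structure $E_n$ annihilates $\pi(z^k\ell)$ modulo polynomials for $k\le n-1$.
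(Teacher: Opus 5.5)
You correctly show, via Leibniz, that $E_n\cdot z^{n-1}\ell^s$ (with $\ell=\log(1-1/z)$) has no $\ell^s$-term and lies in $\bigoplus_{j=0}^{s-1}\Q[z]\,\ell^j$. But you only get coefficients of degree $\le 2n-2$. The whole content of the lemma is the bound $\le n-1$, and none of your three proposed remedies proves it.

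\begin{itemize}
\item Reading degrees off the behaviour at $z=\infty$ cannot work. Algebraic independence makes the coefficients unique but does not bound them, and $\ord_\infty\bigl(\sum_j A_j\ell^j\bigr)$ gives no control on $\deg A_j$ because of cancellation. For example, $z\ell+1\in 1/z\cdot\Q[[1/z]]$ although the coefficient of $\ell$ has degree $1$.
\item The factorization of Lemma~\ref{basic identities}~(i) does not keep intermediate coefficients in $\Q[z]_{\le n-1}$. After the $k$ rightmost factors have acted ($1\le k\le n-1$), the coefficient of $\ell^s$ is $E_k\cdot z^{n-1}=\frac{(n-1)!}{(n-1-k)!}\,z^{n-1}(z-1)^k$, of degree $n-1+k$. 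So the bound can only come from a cancellation you have not exhibited.
\item Corollary~\ref{corollary inc} characterizes when $L\cdot f\in K[z]$. It carries no information about the degrees of the coefficients of a non-polynomial result, and Example~\ref{order 1} only covers $s=1$.
\end{itemize}

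The missing idea is to induct on $n$ (for all $s$ simultaneously) using Lemma~\ref{basic identities}~(ii), $E_{n+1}z=z\bigl(E_1-(n-1)z-1\bigr)E_n$; this is what the paper does. Suppose $E_n\cdot z^{n-1}\ell^s=P+\sum_{j=1}^{s-1}P_j\ell^j$ with $\deg P_j\le n-1$. Then $E_{n+1}\cdot z^{n}\ell^s$ is obtained by applying $z\bigl(E_1-(n-1)z-1\bigr)$, which sends $P_j\ell^j$ to $z\bigl(z(z-1)P_j'-((n-1)z+1)P_j\bigr)\ell^j+jzP_j\ell^{j-1}$. The coefficient of $z^{\deg P_j+1}$ in the bracket is $(\deg P_j-(n-1))$ times the leading coefficient of $P_j$. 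This is the analogue of your factor $(\deg p-2i)$, but now it vanishes exactly at the critical degree $n-1$. Hence every new coefficient has degree $\le n$, and the base case is $E_1\cdot\ell^s=s\ell^{s-1}$.

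Alternatively, you can repair your Leibniz route by passing to $w=1/z$. Use the identity $\partial_z^n\bigl(z^{n-1}g(1/z)\bigr)=(-1)^n z^{-n-1}g^{(n)}(1/z)$ with $g(w)=\log^s(1-w)$, where $g^{(n)}(w)=(1-w)^{-n}\sum_{j<s}b_j\log^j(1-w)$ and $b_j\in\Q$. This gives directly $E_n\cdot z^{n-1}\ell^s=(-1)^n z^{n-1}\sum_{j<s}b_j\ell^j$.
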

\begin{proof}
We show the assertion by induction on $n$. In case of $n=1$, for any positive integer $s$, a straight forward computation yields 
\[
E_1\cdot \log^{s}(1-1/z)=s\log^{s-1}(1-1/z)\in \Q[z]\oplus \Q\log^{s-1}(1-1/z).
\]
Therefore the statement holds. Assume the relation holds for $n\ge1$. In case of $n+1$, applying Lemma~\ref{basic identities} ${\rm(ii)}$, 
\begin{align}
E_{n+1}\cdot z^{n}\log^s(1-1/z)&=E_{n+1}z\cdot z^{n-1}\log^s(1-1/z) \nonumber\\
                                                                               &=z(E_1-(n-1)z-1)E_n\cdot z^{n-1}\log^s(1-1/z). \label{compute}
\end{align}
By the induction hypothesis, there exist polynomials $P(z),P_1(z),\ldots,P_{s-1}(z)\in \Q[z]$ with $\deg\,P_j\le n-1$ such that
\begin{align} \label{expand En}
E_n\cdot z^{n-1}\log^s(1-1/z)=P(z)+\sum_{j=1}^{s-1}P_j(z)\log^{j}(1-1/z).
\end{align}
A straight forward computation yields 
\[
\deg\left(z(E_1-(n-1)z-1)\cdot P_j(z)\right)\le n,
\]
and using equations \eqref{compute} and \eqref{expand En}, we conclude
\[
E_{n+1}\cdot z^{n}\log^s(1-1/z)\in \Q[z]\oplus \bigoplus_{j=1}^{s-1} \Q[z]_{\le n}\log^j(1-1/z),
\]
as desired.
\end{proof}
\begin{proof}[\textbf{Proof of Theorem~\ref{R_n of power of log}}]
Since $\log(1-1/z)$ is transcendental over $\Q(z)$, we have
\[
V_n(L_m)
 = \Span_{\Q}\!\left\{ \pi(z^k \log^{s}(1-1/z))
   \,\middle|\, 0\le k\le n-1,\ 1\le s\le m \right\}.
\]
Hence it suffices to show that
\begin{equation}\label{aim}
\bigl(z^n(z-1)^n\partial_z^n\bigr)^m
 \cdot z^k\log^s(1-1/z)\in \Q[z]
\end{equation}
for all integers $0\le k\le n-1$ and $1\le s\le m$.
Fix an integer $k$ with $0\le k\le n-1$.
By Lemma~\ref{basic identities}\,{\rm(i)}, we have the factorization
\[
z^n(z-1)^n\partial_z^n
 = \prod_{j=k+1}^{n-1}
   \bigl(z(z-1)\partial_z - j(2z-1)\bigr)
   \cdot z^{k+1}(z-1)^{k+1}\partial_z^{k+1} .
\]
For a positive integer $s$, Lemma~\ref{basic relation} for $k+1$, together with the above identity, implies
\[
z^n(z-1)^n\partial_z^n \cdot z^k\log^s(1-1/z)
 \in \Q[z]
 \oplus
 \bigoplus_{j=1}^{s-1} \Q[z]_{\le n-1}\log^j(1-1/z).
\]
Iterating this argument, we obtain
\[
\bigl(z^n(z-1)^n\partial_z^n\bigr)^m
 \cdot z^k\log^s(1-1/z)\in \Q[z]
\qquad (1\le s\le m),
\]
which proves \eqref{aim} and completes the proof.
\end{proof}
In what follows, for the $K$-isomorphism $\Phi$ defined in \eqref{Phi}, we set
\[
\varphi_s:=\Phi\bigl(\log^{s}(1-1/z)\bigr) \qquad (1\le s\le m).
\]
The next result is a direct consequence of Theorem~\ref{R_n of power of log} and Proposition~\ref{Pade}.
\begin{corollary}\label{pade log}
Let $m$ be a positive integer, and let $n,\ell$ be nonnegative integers. Define the polynomials
\begin{align*}
P_{n,\ell}(z) = {\rm Eval}_{t=z}\circ R^{*}_n\cdot t^{\ell},\ \ Q_{n,s,\ell}(z)= \varphi_{s}\!\left(\frac{P_{n,\ell}(z)-P_{n,\ell}(t)}{z-t}\right) \qquad (1\le s\le m).
\end{align*}
Then the vector
$
\bigl(P_{n,\ell},\,Q_{n,1,\ell},\,\ldots,\,Q_{n,m,\ell}\bigr)
$
forms a weight $n$ Pad\'{e}-type approximant of 
\[
\bigl(\log(1-1/z),\ldots,\log^{m}(1-1/z)\bigr).
\]
\end{corollary}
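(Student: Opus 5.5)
The statement follows from Proposition~\ref{Pade}(ii), applied with $L=L_m$, the $\Q$-basis $f_s=\log^{s}(1-1/z)$ ($1\le s\le m$) of $V_1(L_m)$, the operator $R=R_n$ of Theorem~\ref{R_n of power of log}, and $A(t)=t^{\ell}$. With these choices the polynomials $P_{n,\ell}$ and $Q_{n,s,\ell}$ are exactly those produced by Proposition~\ref{Pade}(ii), and $\varphi_s=\varphi_{f_s}$. We therefore have to check three hypotheses: that $\dim V_1(L_m)>0$; that $R_n\in I_n(L_m)\setminus\{0\}$; and that $P_{n,\ell}\neq 0$.

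\emph{First two hypotheses.} The description $V_1(L_m)=\Span_{\Q}\{\log^s(1-1/z)\}$ stated before the theorem gives $\dim V_1(L_m)=m>0$. Note that Lemma~\ref{property P implies}(i) already forces the dimension to equal $m$, and the $m$ logarithmic powers are linearly independent because $\log(1-1/z)$ is transcendental. Theorem~\ref{R_n of power of log} gives $R_n\in I_n(L_m)$, and $R_n\neq0$ is clear since its leading term is $z^{mn}(z-1)^{mn}\partial_z^{mn}/(n!)^m$.

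\emph{Nonvanishing of $P_{n,\ell}$.} This is the only real point. I would show that $R_n$ has property $(P)$ and then apply Lemma~\ref{equiv of (P)}, which gives $\deg R_n^*\cdot t^{\ell}=\ell+\ord_{(1,-1)}(R_n)$.

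By additivity \eqref{sum of ord}, $\ord_{(1,-1)}(R_n)=m\cdot \ord_{(1,-1)}(z^n(z-1)^n\partial_z^n)=mn\ge1$.

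For property $(P)$, work directly with the adjoint. We have $(E_n)^*=(-1)^n\partial_t^n t^n(t-1)^n$. This operator sends $t^k$ to a polynomial of degree $k+n$ with leading coefficient $(-1)^n(k+2n)!/(k+n)!\neq0$, since differentiating $n$ times a polynomial of degree $k+2n$ does not kill its leading term in characteristic $0$. Now $R_n^*=(n!)^{-m}((E_n)^*)^m$, and composing $m$ such degree-raising maps with nonzero leading coefficients yields $\deg R_n^*\cdot t^\ell=\ell+mn$, with nonzero leading coefficient. Hence $R_n^*\cdot t^{\ell}\neq0$, so $P_{n,\ell}=\mathrm{Eval}_{t=z}(R_n^*\cdot t^\ell)\neq0$. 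The same computation establishes property $(P)$ via Lemma~\ref{equiv of (P)}, although only the nonvanishing is needed here.

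Proposition~\ref{Pade}(ii) then gives that $(P_{n,\ell},Q_{n,1,\ell},\ldots,Q_{n,m,\ell})$ is a weight $n$ Pad\'e-type approximant of $(\log(1-1/z),\ldots,\log^m(1-1/z))$.

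The degree bound $\deg P\le M$ with $M\ge mn$ required by Lemma~\ref{equivalence Pade} holds with $M=mn+\ell$. This step is mere bookkeeping, and I expect no obstacle beyond the leading-coefficient check above.
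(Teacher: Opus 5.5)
Your proposal is correct and follows the paper's proof. The paper likewise observes that $\deg R_n^*\cdot t^{\ell}=mn+\ell$, so $P_{n,\ell}\neq 0$, and then invokes Proposition~\ref{Pade}; you additionally supply the leading-coefficient computation for $(E_n)^*=(-1)^n\partial_t^n t^n(t-1)^n$, which the paper leaves implicit in the phrase ``by definition of $R_n$''.
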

\begin{proof}
By definition of $R_n$, we have 
\begin{align} \label{deg R^*_nP}
\deg \, R^{*}_n\cdot t^\ell=mn+\ell
\end{align}
and, in particular the polynomial $P_{n,\ell}(z)$ is nonzero. Proposition~\ref{Pade} implies that the statement holds.
\end{proof}
We keep the notation in Corollary~\ref{pade log}.
Define the determinant of the $(m+1)\times (m+1)$ matrix
\[
\Delta_{n}(z)
  := \det
  \begin{pmatrix}
  P_{n,0}(z) & P_{n,1}(z) & \cdots & P_{n,m}(z)\\
  Q_{n,1,0}(z) & Q_{n,1,1}(z) & \cdots & Q_{n,1,m}(z)\\
  \vdots & \vdots & \ddots & \vdots\\
  Q_{n,m,0}(z) & Q_{n,m,1}(z) & \cdots & Q_{n,m,m}(z)
  \end{pmatrix}.
\]
\begin{corollary}\label{power of log det non zero}
We have $\Delta_{n}(z)\in \Q^{\times}$.
\end{corollary}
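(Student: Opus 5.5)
The plan is to apply Proposition~\ref{equivalence nonzero det} with $L=L_m$, $d=m$, the basis $f_s=\log^s(1-1/z)$ ($1\le s\le m$) of $V_1(L_m)$, and the operators $R_n\in I_n(L_m)$ supplied by Theorem~\ref{R_n of power of log}. This parallels the proof of Corollary~\ref{multiple polylog det non zero}. Once the hypotheses are checked, implication (a)$\Rightarrow$(b) gives $\Delta_n(z)\in\Q^{\times}$ for every $n$, with base field $K=\Q$.

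First I check hypothesis (i), namely $\ord_{(1,-1)}(R_n)=mn=dn$. The operator $E_n=z^n(z-1)^n\partial_z^n$ has leading coefficient of degree $2n$ in front of $\partial_z^n$, so $\ord_{(1,-1)}(E_n)=2n-n=n$. By additivity \eqref{sum of ord}, $\ord_{(1,-1)}(R_n)=m\cdot n$, since the scalar $1/(n!)^m$ does not affect the order.

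Next I check hypothesis (ii), property $(P)$ for $R_n$. By Lemma~\ref{equiv of (P)} this is equivalent to $\deg R_n^*\cdot P=\deg P+mn$ for all $P$, which is the degree identity \eqref{deg R^*_nP} recorded in the proof of Corollary~\ref{pade log}. To justify it directly, note that $E_n^*=(-1)^n\partial_t^n t^n(t-1)^n$. For $\ell\ge0$,
\[
E_n^*\cdot t^\ell=(-1)^n\frac{(2n+\ell)!}{(n+\ell)!}\,t^{n+\ell}+(\text{lower terms}).
\]
The leading constant is nonzero, so iterating $m$ times raises degrees by exactly $mn$. I also need $d=\ord_{(1,-1)}(L_m)=m\ge1$ and property $(P)$ for $L_m$; both are stated in the text.

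It remains to verify condition (a): $1,\log(1-1/z),\ldots,\log^m(1-1/z)$ are linearly independent over $\Q(z)$. This follows because $\log(1-1/z)$ is transcendental over $\Q(z)$, a fact already used in the proof of Theorem~\ref{R_n of power of log}. A nontrivial relation $\sum_{s=0}^m c_s(z)\log^s(1-1/z)=0$ would make $\log(1-1/z)$ a root of a nonzero polynomial over $\Q(z)$. If one wants a self-contained argument, transcendence follows from the standard monodromy argument, or from a derivative argument: differentiating a minimal relation with $E_1\cdot\log=1$ produces a relation of lower degree.

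The only mildly delicate point is the degree and leading-coefficient computation for $R_n^*$, together with keeping track of the sign conventions of $\iota$. Beyond that, the corollary is a direct application of Proposition~\ref{equivalence nonzero det}.
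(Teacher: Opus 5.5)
Your proposal is correct and follows the paper's proof essentially verbatim: invoke Proposition~\ref{equivalence nonzero det}, obtain hypothesis (i) from the additivity \eqref{sum of ord} of $\ord_{(1,-1)}$, hypothesis (ii) from $\deg R_n^*\cdot t^{\ell}=mn+\ell$ together with Lemma~\ref{equiv of (P)}, and condition (a) from the transcendence of $\log(1-1/z)$ over $\Q(z)$. Your explicit leading coefficient $(-1)^n\frac{(2n+\ell)!}{(n+\ell)!}$ for $E_n^*\cdot t^{\ell}$ usefully justifies the degree identity that the paper merely asserts. One small addition to your optional derivative sketch of transcendence: the lower-degree relation is nontrivial because $1/(z(z-1))$ has nonzero residues, so it is not the derivative of a rational function.
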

\begin{proof}
By Proposition~\ref{equivalence nonzero det}, it suffices to show that 
\(
R_{n}
\)
satisfies properties ${\rm(i)}$ and ${\rm(ii)}$ in Proposition~\ref{equivalence nonzero det} and the Laurent series $1$ and $(\log^s(1-1/z))_{1\le s \le m}$ are linearly independent over $\Q(z)$.
Equation \eqref{sum of ord} together with the definition of $\ord_{(1,-1)}$ yields
\[
\ord_{(1,-1)}(R_{n}) = 2mn - mn = \ord_{(1,-1)}(L_{m})n,
\]
so $R_n$ satisfies property ${\rm(i)}$.  
Next, combining equation~\eqref{deg R^*_nP} with Lemma~\ref{equiv of (P)} implies that $R_n$ satisfies property $(P)$.  
Finally, since $\log(1-1/z)$ is transcendental over $\Q(z)$, $1$ and $(\log^s(1-1/z))_{1\le s \le m}$ are linearly independent over $\Q(z)$.
The proof of Corollary~\ref{power of log det non zero} is now complete. 
\end{proof}

\noindent
{\bf Acknowledgements.}

\smallskip

This work was supported in part by the Research Institute for Mathematical Sciences, an International Joint Usage/Research Center located in Kyoto University, and by the Institute for Mathematical Informatics, Meiji Gakuin University.
The author was supported by JSPS KAKENHI Grant Number 24K16905.
\bibliography{}

\medskip\vglue5pt
\vskip 0pt plus 1fill
\hbox{\vbox{\hbox{Makoto \textsc{Kawashima}}
\hbox{{\tt kawasima@mi.meijigakuin.ac.jp}}
\hbox{Institute for Mathematical Informatics}
\hbox{Meiji Gakuin University}
\hbox{Totsuka, Yokohama, Kanagawa}
\hbox{224-8539, Japan}
}}
\end{document}